\def\beq{\begin{equation}}
\def\eeq{\end{equation}}
\def\ba{\begin{array}}
\def\ea{\end{array}}
\def\R{\mathbb R}
\newtheorem{thm}{Theorem}[section]
\newtheorem{lm}[thm]{Lemma}
\newtheorem{prop}[thm]{Proposition}
\theoremstyle{definition}
\newtheorem{rem}[thm]{Remark}
\theoremstyle{remark}
\begin{document}
\pagestyle{plain}
\title{Anisotropic Moser-Trudinger inequality involving $L^{n}$ norm}

\author{Changliang Zhou\\
 \small\small {School of Science, East China University of Technology, Nanchang, China }\\
}

\thanks{The authors are supported partially by NSFC of China (No. 11771285).
The corresponding author are  Changliang Zhou, zhzhl800130@sjtu.edu.cn
  }

\begin{abstract} The paper is concerned aboout a sharp form of  Anisotropic Moser-Trudinger inequality which  involves  $L^{n}$ norm. Let
  \begin{equation*}
  \lambda_{1}(\Omega)=\inf_{u\in W_{0}^{1,n}(\Omega),u\not\equiv 0}||F(\nabla u)||_{L^{n}(\Omega)}^{n}/||u||_{L^{n}(\Omega)}^{n}
  \end{equation*}
  be the first eigenvalue associated with $n$-Finsler-Laplacian. using blowing up analysis, we obtain that 
  \begin{equation*}
  \sup_{u\in W_{0}^{1,n}(\Omega),||F(\nabla u)||_{L^{n}(\Omega)}=1}\int_{\Omega}e^{\lambda_{n}(1+\alpha||u||_{L^{n}(\Omega)}^{n})^{\frac{1}{n-1}}|u|^{\frac{n}{n-1}}}dx
  \end{equation*}
  is finite for any $0\leq \alpha<\lambda_{1}(\Omega)$,and the supremum is infinite for any $\alpha\geq \lambda_{1}(\Omega)$, where $\lambda_{n}=n^{\frac{n}{n-1}}\kappa_{n}^{\frac{1}{n-1}}$ ($\kappa_{n}$ is the volume of the unit wulff ball)
   and the function $F$ is positive,convex and homogeneous of degree $1$, and its polar $F^o$ represents a Finsler metric on $\R^n$. Furthermore, the supremum is attained for any $0\leq \alpha<\lambda_{1}(\Omega)$.
\end{abstract}

\maketitle

{\bf Keywords:} Moser-Trudinger inequality, $n$-Finsler-Laplacian, Blow-up analysis\\

{\bf MSC: }35A05, 35J65

 \section{introduction}
 Let $\Omega\subset \R^n $ be a smooth bounded domain. The Sobolev embedding theorem states that $W_{0}^{1,n}(\Omega)$ is embedded in $L^{p}(\Omega)$ for any $p>1$,  or equivalently, using the Dirichlet norm $\|u\|_{W^{1,n}_0(\Omega)}=(\int_{\Omega}|\nabla u|^ndx)^{\frac 1n} $  on $W_{0}^{1,n}(\Omega)$,
  $$\sup_{u\in W_{0}^{1,n}(\Omega),||\nabla u||_{L^{n}(\Omega)}\leq1}\int_{\Omega}|u|^pdx<+\infty.$$
  But it is well known that $W_{0}^{1,n}(\Omega)$ is not embedded in $ L^{\infty}(\Omega)$. Hence, one is led to look for a function $g(s): \R \rightarrow \R^+ $ with maximal growth such that
  $$\sup_{u\in W_{0}^{1,n}(\Omega),||\nabla u||_{L^{n}(\Omega)}\leq1}\int_{\Omega}g(u)dx<+\infty.$$  The Moser-Trudinger inequality states that the maximal growth is of exponential type, which was shown by Pohozhaev \cite{S2}, Trudinger \cite{T} and Moser \cite{M}. This inequality says that
\begin{equation}\label{1-01}
\sup_{u\in W_{0}^{1,n}(\Omega),||\nabla u||_{L^{n}(\Omega)}\leq1}\int_{\Omega}e^{\alpha |u|^{\frac{n}{n-1}}}dx<+\infty
\end{equation}
for any $\alpha\leq\alpha_{n}$, where $\alpha_{n}=n\omega_{n-1}^{\frac{1}{n-1}}$  and $\omega_{n-1}$ is the surface area of the unit ball in $\R^n$. The inequality is optimal, i.e. for any $\alpha>\alpha_{n}$ there exists a sequence of $\{u_{\epsilon}\}$ in $W_{0}^{1,n}(\Omega)$ with  $||\nabla u_\epsilon||_{L^{n}(\Omega)}\leq 1 $ such that
\begin{equation*}
\int_{\Omega}e^{\alpha |u_{\epsilon}|^{\frac{n}{n-1}}}dx\rightarrow \infty \qquad as~~\epsilon\rightarrow 0.
\end{equation*}
On the other hand, for any fixed $u\in W_{0}^{1,n}(\Omega)$, it is also known that
\begin{equation*}
\int_{\Omega}e^{\alpha |u|^{\frac{n}{n-1}}}dx<+\infty
\end{equation*}
for any $\alpha>0$.

Another interesting question about Moser-Trudinger inequalities is whether extremal functions  exist or not. The first result in this direction is due to Carleson and Chang \cite{CC}, who proved that the supremum is attained when $\Omega$ is a unit ball in $\R^{n}$. Then Struwe \cite{S} got the existence of extremals for $\Omega$ close to a ball. Struwe's technique was then used and extended by Flucher \cite{F} to $\Omega$ which is the more general bounded smooth domain in $\R^2$. Later, Lin \cite{L2} generalized the existence result to a bounded smooth domain in  dimension-$n$. Recently  Mancini and  Martinazzi \cite{MM} reproved the Carleson and Chang's result by using a new method based on the Dirichlet energy, also allowing for perturbations of the functional. Even Thizy \cite{T1} also gave examples in which slightly perturbed Moser-Trudinger inequalities do not admit extremals (as conjectured in \cite{MM}), hence shown that the existence of extremal for the Moser-Trudinger inequality should not be taken for granted and holds only under some rigid conditions.
Actually, the inequality (\ref{1-01}) is viewed as a $n$-dimensional analog of the Sobolev inequality, and it plays an important role in  analytic problems and in geometric problems. Now there are many generalizations of the classical Moser-Trudinger inequality (\ref{1-01}), see for instance \cite{AD,L1,LY,IM,CR,Y1,Y2,YZ1,YZ2,WY} and the references therein.


In 2012,  Wang and  Xia \cite{WX} proved the following Moser-Trudinger type inequality
\begin{equation}\label{1-02}
\int_{\Omega}e^{\lambda |u|^{\frac{n}{n-1}}}dx\leq C(n)|\Omega|
\end{equation}
for all $u\in W_{0}^{1,n}(\Omega)$ and $\int_{\Omega}F^{n}(\nabla u)dx\leq 1$. Here $\lambda\leq\lambda_{n}=n^{\frac{n}{n-1}}\kappa_n^{\frac{1}{n-1}}$, $\lambda_{n}$ is optimal in the sense that if $\lambda >\lambda_{n}$ we can find a sequence $\{u_{k}\}$ such that $\int_{\Omega}e^{\lambda |u_k|^{\frac{n}{n-1}}}dx$ diverges. Later, in \cite{ZZ} and \cite {ZZ1} there have  shown that  the supremum is attained when $\Omega$  is bounded domain in  $\R^{n}$.

Adimurthi and Druet \cite{AD}, Y.Y Yang \cite{Y1} have proved that, when $\overline{\lambda}_{1}(\Omega)>0$ be the first eigenvalue of the $n$-Laplacian with Dirichlet boundary condition in $\Omega$, then \\
(1) For any $0\leq \alpha<\overline{\lambda}_{1}(\Omega)$,
\begin{equation*}
\sup_{u\in W_{0}^{1,n}(\Omega),||\nabla u||_{L^{n}(\Omega)}^{n}=1}\int_{\Omega}e^{\alpha_{n}|u|^{\frac{n}{n-1}}(1+\alpha||u||_{L^{n}(\Omega)}^{n})^{\frac{1}{n-1}}}dx<+\infty.
\end{equation*}
(2) For any $\alpha\geq\overline{\lambda}_{1}(\Omega)$,
\begin{equation*}
\sup_{u\in W_{0}^{1,n}(\Omega),||\nabla u||_{L^{n}(\Omega)}^{n}=1}\int_{\Omega}e^{\alpha_{n}|u|^{\frac{n}{n-1}}(1+\alpha||u||_{L^{n}(\Omega)}^{n})^{\frac{1}{n-1}}}dx=+\infty.
\end{equation*}
Furthermore, when $0\leq \alpha<\overline{\lambda}_{1}(\Omega)$, the supremum is also attained.

For simplicity, we introduce the notions
\begin{equation}\label{1-003}
J_{\lambda}^{\alpha}(u)=\int_{\Omega}e^{\lambda(1+\alpha||u||_{L^{n}(\Omega)}^{n})^{\frac{1}{n-1}}|u|^{\frac{n}{n-1}}}dx,
\mathcal{H}=\{u\in W_{0}^{1,n}(\Omega):||F(\nabla u)||_{L^{n}(\Omega)}=1\}.
\end{equation}

Let $\lambda_{1}(\Omega)$ be the first eigenvalue of Finsler-n-Laplacian with Dirichlet boundary condition in $\Omega$. It is defined by
\begin{equation}\label{1-004}
\lambda_{1}(\Omega)=\inf_{u\in W_{0}^{1,n}(\Omega),u\neq 0}\frac{||F(\nabla u)||_{L^{n}(\Omega)}^{n}}{||u||_{L^{n}(\Omega)}^{n}},
\end{equation}
from \cite{BFK}, we can know that $\lambda_{1}(\Omega)>0$ and it is also achieved uniquely by a positive function $\varphi$ satisfying
\begin{equation*}
\left\{
\begin{array}{llc}
-Q_{n}\varphi=\lambda_{1}(\Omega)|\varphi|^{n-2}\varphi,~~~&\text{in}~~\Omega\\
\varphi=0            &\text{on}~~\Omega,
\end{array}
\right.
\end{equation*}
where $-Q_{n}$ is Finsler-Laplacian operator that can be founded in section 2.\\
In this paper we state the following:

\begin{thm}\label{1-04}
Let $\Omega\subset \R^n$ be  a smooth and bounded domain and let $\lambda_{1}(\Omega)>0$ be the first eigenvalue of the Finsler-n-Laplacian with Dirichlet boundary condition in $\Omega$.Then we have
\begin{eqnarray*}
&(1)&~~\text {For any}~~~0\leq \alpha<\lambda_{1}(\Omega),~~\sup_{u\in\mathcal{H}}J_{\lambda_{n}}^{\alpha}(u)<+\infty.\\
&(2)&~~\text{For any}~~~\alpha\geq\lambda_{1}(\Omega),~~\sup_{u\in\mathcal{H}}J_{\lambda_{n}}^{\alpha}(u)=+\infty.
\end{eqnarray*}
\end{thm}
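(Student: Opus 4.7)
I would split the proof into its two statements. Part (2) is constructive: I exhibit an explicit sequence in $\mathcal H$ on which $J_{\lambda_n}^\alpha$ diverges whenever $\alpha\ge\lambda_1(\Omega)$. Part (1) is the substantive statement and I would establish it by contradiction, combining the subcritical Wang--Xia inequality \eqref{1-02} with an anisotropic concentration-compactness alternative in the style of Carleson--Chang, Adimurthi--Druet and Yang \cite{CC,AD,Y1}, adapted to the Finsler $n$-Laplacian as developed in \cite{WX,ZZ,ZZ1}.

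\textbf{Divergence, $\alpha\ge\lambda_1(\Omega)$.} Normalize the positive first Finsler eigenfunction $\varphi$ so that $\|F(\nabla\varphi)\|_{L^n(\Omega)}=1$, hence $\|\varphi\|_{L^n(\Omega)}^n=1/\lambda_1(\Omega)$. Pick a Wulff ball $B_\rho(x_0)\subset\Omega$ (with respect to the metric $F^o$) on which $\varphi\ge c>0$, and let $M_\epsilon$ be the anisotropic Moser sequence supported in $B_\rho(x_0)$ used in \cite{WX,ZZ}, satisfying $\|F(\nabla M_\epsilon)\|_{L^n}=1$, $M_\epsilon\equiv\kappa_n^{-1/n}(\log(\rho/\epsilon))^{(n-1)/n}$ on the inner Wulff ball $B_\epsilon(x_0)$, and $\|M_\epsilon\|_{L^n}\to0$. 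Form $v_\epsilon=\varphi+\sigma_\epsilon M_\epsilon$ with a small parameter $\sigma_\epsilon\to0$ chosen so that the cross-terms in $\|F(\nabla v_\epsilon)\|_{L^n}^n$ -- which are $O(\sigma_\epsilon)$ and can be identified via $-Q_n\varphi=\lambda_1(\Omega)\varphi^{n-1}$ -- are absorbed. Renormalize to $u_\epsilon=v_\epsilon/\|F(\nabla v_\epsilon)\|_{L^n}\in\mathcal H$; then $\|u_\epsilon\|_{L^n}^n\to 1/\lambda_1(\Omega)$, so on $B_\epsilon(x_0)$ the integrand of $J_{\lambda_n}^\alpha(u_\epsilon)$ is at least $\exp\bigl(\lambda_n(1+\alpha/\lambda_1(\Omega))^{1/(n-1)}M_\epsilon^{n/(n-1)}(1+o(1))\bigr)$, whose integral is a power of $\rho/\epsilon$ strictly greater than $n$ when $\alpha\ge\lambda_1(\Omega)$, forcing $J_{\lambda_n}^\alpha(u_\epsilon)\to+\infty$.

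\textbf{Finiteness, $\alpha<\lambda_1(\Omega)$.} Assume $\sup_{\mathcal H}J_{\lambda_n}^\alpha=+\infty$ and, for each $\beta<\lambda_n$, use the compactness provided by \eqref{1-02} (sub\-critical exponent) to pick a maximizer $u_\beta\in\mathcal H$ of $J_\beta^\alpha$. Let $\beta\uparrow\lambda_n$ and extract $u_\beta\rightharpoonup u_0$ weakly in $W^{1,n}_0(\Omega)$ and strongly in every $L^p(\Omega)$. \emph{Case A: $u_0\not\equiv 0$.} A Brezis--Lieb decomposition in the Finsler norm gives $\|F(\nabla(u_\beta-u_0))\|_{L^n}^n=1-\|F(\nabla u_0)\|_{L^n}^n+o(1)<1$; applying \eqref{1-02} to $(u_\beta-u_0)/\|F(\nabla(u_\beta-u_0))\|_{L^n}$ then yields an anisotropic P.-L.~Lions concentration lemma, namely a uniform bound on $\int_\Omega e^{\gamma|u_\beta|^{n/(n-1)}}dx$ for every $\gamma<\lambda_n(1-\|F(\nabla u_0)\|_{L^n}^n)^{-1/(n-1)}$. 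The assumption $\alpha<\lambda_1(\Omega)\le\|F(\nabla u_0)\|_{L^n}^n/\|u_0\|_{L^n}^n$ is equivalent to $(1+\alpha\|u_0\|_{L^n}^n)(1-\|F(\nabla u_0)\|_{L^n}^n)<1$, so the exponent of $J_{\lambda_n}^\alpha(u_\beta)$ lies strictly below Lions' admissible threshold for $\beta$ close to $\lambda_n$, and $J_{\lambda_n}^\alpha(u_\beta)$ stays bounded, a contradiction. \emph{Case B: $u_0\equiv 0$.} Then $\|u_\beta\|_{L^n}\to 0$, the perturbed exponent $\lambda_n(1+\alpha\|u_\beta\|_{L^n}^n)^{1/(n-1)}\to\lambda_n$, and concentration of $F(\nabla u_\beta)^n dx$ at some $x_0\in\overline\Omega$ is forced by the standard Lions alternative. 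I would analyze the Euler--Lagrange equation satisfied by $u_\beta$, rescale it at the anisotropic scale $r_\beta$ set by $\max u_\beta$, identify the blow-up limit with the anisotropic bubble of \cite{WX,ZZ,ZZ1}, and use an anisotropic capacity/Carleson--Chang comparison on $B_{r_\beta}(x_0)$ to bound $J_{\lambda_n}^\alpha(u_\beta)$ uniformly, again contradicting the hypothesis.

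\textbf{Main obstacle.} The genuine difficulty lies in Case B: transporting the Carleson--Chang sharp capacity estimate and the accompanying blow-up asymptotics from the Euclidean $n$-Laplacian to the Finsler $n$-Laplacian, and simultaneously controlling the perturbative term $\lambda_n\bigl[(1+\alpha\|u_\beta\|_{L^n}^n)^{1/(n-1)}-1\bigr]|u_\beta|^{n/(n-1)}$, which is small because $\|u_\beta\|_{L^n}\to0$ but multiplied by the blowing-up $|u_\beta|^{n/(n-1)}$ and therefore cannot be dropped without breaking the sharp constant. The limited regularity theory for $-Q_n$ (no classical $C^\infty$ estimates) forces us to work within the anisotropic framework of \cite{BFK,WX,ZZ,ZZ1}; all other steps are essentially robust adaptations of the classical Adimurthi--Druet scheme.
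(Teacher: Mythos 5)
There is a genuine gap in your proof of part (2), and the test function you build does \emph{not} diverge. With $v_\epsilon=\varphi+\sigma_\epsilon M_\epsilon$, $\sigma_\epsilon\to0$, and $u_\epsilon=v_\epsilon/\|F(\nabla v_\epsilon)\|_{L^n}$, on the inner Wulff ball $\mathcal{W}_\epsilon(x_0)$ one has $u_\epsilon\approx(\varphi(x_0)+\sigma_\epsilon M_\epsilon)/\|F(\nabla v_\epsilon)\|_{L^n}$, hence $u_\epsilon^{n/(n-1)}=\sigma_\epsilon^{n/(n-1)}M_\epsilon^{n/(n-1)}(1+o(1))$, \emph{not} $M_\epsilon^{n/(n-1)}(1+o(1))$ as you assert. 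Since the unit-Dirichlet-norm anisotropic Moser cap satisfies $\lambda_nM_\epsilon^{n/(n-1)}=n\log(\rho/\epsilon)$ on $\mathcal{W}_\epsilon$, the exponent in $J^\alpha_{\lambda_n}(u_\epsilon)$ there is $(1+\alpha/\lambda_1)^{1/(n-1)}\sigma_\epsilon^{n/(n-1)}n\log(\rho/\epsilon)(1+o(1))=o(\log(1/\epsilon))$, and the contribution of $\mathcal{W}_\epsilon(x_0)$ to $J^\alpha_{\lambda_n}(u_\epsilon)$ tends to $0$, not $\infty$. Note also that your claimed lower bound, if correct, would force divergence for \emph{every} $\alpha>0$, contradicting part (1). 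Freezing $\sigma_\epsilon\equiv\sigma$ does not rescue the argument at the threshold: the divergence criterion reduces to $\sigma^n(\alpha/\lambda_1-1)>1$, impossible at $\alpha=\lambda_1(\Omega)$. The root problem is that your interaction term has the wrong sign: expanding $F^n(\nabla\varphi+\sigma_\epsilon\nabla M_\epsilon)$ and integrating by parts against $-Q_n\varphi=\lambda_1\varphi^{n-1}$ produces the positive cross term $n\sigma_\epsilon\lambda_1\int_\Omega\varphi^{n-1}M_\epsilon\,dx$ in $\|F(\nabla v_\epsilon)\|_{L^n}^n$, which raises the normalizing denominator.

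The paper's test function is built to exploit a \emph{negative} interaction. It keeps the full-height cap $(\frac{n}{\lambda_n}\log\frac1\epsilon)^{(n-1)/n}$ on $\{F^o\le\epsilon\}$, interpolates logarithmically on $\{\epsilon<F^o\le\delta\}$ down to the nonzero value $t_\epsilon\varphi(x_\delta)$, and only then glues on the tail $t_\epsilon\varphi$. Because the annular profile terminates above zero, the annulus \emph{saves} Dirichlet energy of size $n^{(n+1)/n}\kappa_n^{1/n}t_\epsilon\varphi(x_\delta)(\log\frac1\epsilon)^{-(n-1)/n}(1+o_\epsilon(1))$ while the tail adds $t_\epsilon^n(1+O(\delta^n))$; choosing $\delta=t_\epsilon^{-n}(\log\frac1\epsilon)^{-1}$, $t_\epsilon^n\log\frac1\epsilon\to\infty$ and $t_\epsilon^{n+1}\log\frac1\epsilon\to0$, the $t_\epsilon^n\log\frac1\epsilon$-order contributions from the $\|F(\nabla\varphi_\epsilon)\|^{-n/(n-1)}$ normalization and from $(1+\lambda_1\|v_\epsilon\|_{L^n}^n)^{1/(n-1)}$ cancel exactly at $\alpha=\lambda_1$, and the surviving gain $\frac{n^{(2n+1)/n}}{n-1}\kappa_n^{1/n}t_\epsilon(\log\frac1\epsilon)^{1/n}\varphi(0)\to+\infty$ is what drives divergence. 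A smooth superposition $\varphi+\sigma_\epsilon M_\epsilon$ cannot reproduce this cancellation because the eigenfunction contribution enters the Dirichlet norm with the wrong sign and the Moser amplitude is reduced by $\sigma_\epsilon$.

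Your sketch of part (1) is consonant with the paper's scheme (subcritical maximizers, dichotomy on $u_0$, Lions-type improvement via $(1+\alpha\|u_0\|_{L^n}^n)(1-\|F(\nabla u_0)\|_{L^n}^n)<1$ when $u_0\neq0$, blow-up analysis when $u_0=0$). One remark: the paper closes the blow-up case more economically than a full Carleson--Chang comparison. Once $M_\epsilon^{1/(n-1)}u_\epsilon\rightharpoonup G_\alpha$ (or $\to0$ at the boundary) is established, one writes
\begin{equation*}
J_{\lambda_n-\epsilon}^\alpha(u_\epsilon)\le e^{\frac{\lambda_n\alpha}{n-1}\|M_\epsilon^{1/(n-1)}u_\epsilon\|_{L^n}^n+o(1)}\int_\Omega e^{\lambda_n|u_\epsilon|^{n/(n-1)}}dx,
\end{equation*}
whose prefactor is bounded and whose integral is controlled by \eqref{1-02}; the sharp Carleson--Chang constant is only invoked for Theorem~\ref{1-05}.
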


\begin{thm}\label{1-05}
Let $\Omega\subset \R^n$ be  a smooth and bounded domain. For any $0\leq \alpha<\lambda_{1}(\Omega)$, $\sup_{u\in\mathcal{H}}J_{\lambda_{n}}^{\alpha}(u)$ is attained by some $C^{1}$ maximizer. In other words, there exists $u_{\alpha}\in \mathcal{H}\cap C^{1}(\Omega)$ such that $J_{\lambda_{n}}^{\alpha}(u_{\alpha})=\sup_{u\in\mathcal{H}}J_{\lambda_{n}}^{\alpha}(u)$.
\end{thm}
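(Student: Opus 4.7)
\bigskip
\noindent\textbf{Proof proposal for Theorem \ref{1-05}.}

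The plan is to run a classical subcritical approximation combined with blow-up analysis, adapted to the Finsler setting and to the $L^{n}$ perturbation. For each small $\epsilon>0$, I would first look at the subcritical functional $J^{\alpha}_{\lambda_{n}-\epsilon}$ restricted to $\mathcal{H}$. Because $\lambda_{n}-\epsilon<\lambda_{n}$, Theorem \ref{1-04}(1) together with a Vitali/uniform-integrability argument implies that $J^{\alpha}_{\lambda_{n}-\epsilon}$ is weakly continuous on bounded subsets of $W^{1,n}_{0}(\Omega)$, and the direct method then yields a maximizer $u_{\epsilon}\in\mathcal{H}$ which I may assume nonnegative after replacing $u_\epsilon$ by $|u_\epsilon|$. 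Writing the Euler--Lagrange equation gives a quasilinear $n$-Finsler-Laplace type equation with right-hand side involving $u_\epsilon^{1/(n-1)} e^{(\lambda_n-\epsilon)(1+\alpha\|u_\epsilon\|_n^n)^{1/(n-1)} u_\epsilon^{n/(n-1)}}$ plus a Lagrange multiplier term proportional to $u_{\epsilon}^{n-1}$.

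Next I would analyze the limit $\epsilon\to 0$. Up to subsequence, $u_{\epsilon}\rightharpoonup u_{0}$ weakly in $W^{1,n}_{0}(\Omega)$ and a.e., and either (a) $u_{\epsilon}\to u_{0}$ strongly in $W^{1,n}_{0}$, in which case $u_{0}\in\mathcal{H}$ and lower semicontinuity gives $J^{\alpha}_{\lambda_{n}}(u_{0})=\sup_{\mathcal H}J^{\alpha}_{\lambda_{n}}$, and we are done; or (b) $c_{\epsilon}:=\max_{\Omega}u_{\epsilon}\to\infty$, i.e.\ a concentration/blow-up occurs at some $x_{0}\in\overline\Omega$. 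The heart of the proof is to rule out (b). For this I would carry out the standard blow-up rescaling using the Finsler distance $F^{o}$: set $r_{\epsilon}$ by $r_{\epsilon}^{n} c_{\epsilon}^{n/(n-1)} e^{(\lambda_{n}-\epsilon)(1+\alpha\|u_\epsilon\|_n^n)^{1/(n-1)}c_\epsilon^{n/(n-1)}}=1$, blow up $u_{\epsilon}$ at a maximum point and show that, after subtraction/normalization, the rescaled functions converge to the Finsler analogue of the standard bubble constructed in \cite{WX}, namely the entire solution $\varphi_{0}(x)=-\frac{n-1}{\lambda_n^{1/(n-1)}}\log(1+c_n F^{o}(x)^{n/(n-1)})$ of the corresponding limit equation on $\mathbb{R}^{n}$. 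Using the capacity/energy estimate of Carleson--Chang type adapted to the anisotropic case (as in \cite{ZZ,ZZ1}), together with the weak convergence $u_{0}$ and the subadditivity inequality $\|F(\nabla u_\epsilon)\|_n^n \geq \|F(\nabla u_0)\|_n^n + \|F(\nabla(u_\epsilon-u_0))\|_n^n + o(1)$, I would obtain the sharp upper bound
\begin{equation*}
\sup_{u\in\mathcal H}J^{\alpha}_{\lambda_{n}}(u)\;\leq\;|\Omega|+\kappa_{n}\,e^{\,\lambda_{n} A(x_{0})\,+\,1+\tfrac12+\cdots+\tfrac{1}{n-1}},
\end{equation*}
where $A(x_{0})$ is the (anisotropic) Robin-type constant of $\Omega$ at $x_{0}$, modified by the contribution $(1+\alpha\|u_0\|_n^n)^{1/(n-1)}$ of the $L^{n}$ perturbation and by $u_0(x_0)$ when $u_0\not\equiv 0$.

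Finally I would construct an explicit test function $\phi_{\epsilon}$ of Moser--Adimurthi--Druet type, in anisotropic form: in a Wulff ball around a suitable point $x_{0}$ use the standard log-profile $c-\frac{n-1}{\lambda_{n} c^{1/(n-1)}}\log(1+c_n(F^{o}(x-x_0)/\rho)^{n/(n-1)})$, and outside a larger Wulff ball use a small multiple $\eta \varphi$ of the first Finsler eigenfunction $\varphi$ normalized so that $\|F(\nabla\phi_\epsilon)\|_n=1$; cutoff and matching is done on the intermediate annulus. Precisely because $\alpha<\lambda_{1}(\Omega)$, the definition of $\lambda_{1}(\Omega)$ gives $\|\phi_{\epsilon}\|_{n}^{n}\ge \|\eta\varphi\|_n^n = \|F(\nabla(\eta\varphi))\|_n^n/\lambda_1(\Omega)$, and careful expansion of $(1+\alpha\|\phi_\epsilon\|_n^n)^{1/(n-1)}$ produces an extra positive term in the exponent that dominates the Robin-constant contribution. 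This yields $J^{\alpha}_{\lambda_{n}}(\phi_{\epsilon})$ strictly larger than the blow-up upper bound for $\epsilon$ small, contradicting (b). Hence (a) holds, the supremum is attained, and $C^{1}$ regularity of the maximizer follows from standard regularity for the Euler--Lagrange quasilinear equation in the Finsler setting (e.g.\ DiBenedetto/Tolksdorf type results) as used in \cite{ZZ,ZZ1}.

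The main obstacle I expect is the test function step: the $L^{n}$ correction $(1+\alpha\|u\|_n^n)^{1/(n-1)}$ interacts non-trivially with the anisotropic bubble profile, and one must show that the gain from $\alpha\|\eta\varphi\|_n^n$ exactly beats the Robin-type loss $\lambda_{n}A(x_{0})$; this is where the sharp hypothesis $\alpha<\lambda_{1}(\Omega)$ enters, through the Rayleigh characterization \eqref{1-004}. Adapting all standard Euclidean estimates to the Finsler distance $F^{o}$ and the associated co-area/symmetrization (convex symmetrization with respect to the Wulff ball) is technical but mirrors \cite{AD,Y1,WX,ZZ}.
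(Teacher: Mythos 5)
Your overall architecture — subcritical maximizers $u_\epsilon\in\mathcal H$ for $J^\alpha_{\lambda_n-\epsilon}$, dichotomy between compactness and blow-up, a sharp upper bound in the blow-up case, and a test function that strictly exceeds it — is exactly the strategy the paper uses (Proposition \ref{3-02}, Lemma \ref{3-10}, Sections 4--5). But the crucial test-function step, as you describe it, would not work, and the mechanism you identify for the strict inequality is not the right one.

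First, a minor but structural point in the blow-up upper bound: when $M_\epsilon=\max u_\epsilon\to\infty$, the paper proves (Lemma \ref{4-000}) that the weak limit $u_0$ is identically zero and $F^n(\nabla u_\epsilon)\,dx\rightharpoonup\delta_{x_0}$; so there is no ``contribution from $u_0(x_0)$ when $u_0\not\equiv 0$'' to account for in the bound. The Robin-type constant that actually appears is $C_G$, the constant term in the asymptotic expansion (\ref{4-39}) of the Green function $G_\alpha$ solving $-Q_n G_\alpha=\delta_{x_0}+\alpha G_\alpha^{n-1}$, i.e.\ the Green function already encodes the $L^n$-perturbation through the $\alpha G_\alpha^{n-1}$ term. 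This is where the hypothesis $\alpha<\lambda_1(\Omega)$ enters: it guarantees, via Lemma \ref{4-49}, that this Green function is well defined and that $M_\epsilon^{1/(n-1)}u_\epsilon$ converges to it.

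Second, and more importantly, the test function you propose is the wrong one. You suggest gluing the bubble to a small multiple $\eta\varphi$ of the first Finsler eigenfunction outside a larger Wulff ball, and you claim the strict inequality $\sup J^\alpha_{\lambda_n}>|\Omega|+\kappa_n e^{\lambda_n C_G+1+\cdots+\frac1{n-1}}$ is produced by a positive contribution of $\alpha\|\eta\varphi\|_n^n$ that ``beats the Robin-constant loss.'' This conflates the construction used in the paper to prove divergence for $\alpha\geq\lambda_1(\Omega)$ (Theorem \ref{1-04}(2), which really does rely on the eigenfunction $\varphi$ and the identity $\lambda_1\|\varphi\|_n^n=1$) with the construction needed for attainability. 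In the attainability step (Lemma of Section 5) the outer profile \emph{must} be the Green function $G_\alpha$, not $\varphi$: the matching condition (\ref{5-015}) between the bubble constant $C$ and the outer profile at $F^o(x-x_0)=R\epsilon$ is precisely what reproduces the constant $C_G$ in the exponent; an eigenfunction would give $\varphi(x_0)$ instead and the resulting test-function value would not be comparable to the blow-up upper bound. Moreover, the strict inequality is not created by any $\alpha\|\phi_\epsilon\|_n^n$ gain: in the paper's construction $\|\phi_\epsilon\|_n^n=O(C^{-n/(n-1)})\to 0$, so that term is a small correction. The strict inequality comes from the tail contribution
\begin{equation*}
\int_{\Omega\setminus\mathcal W_{2R\epsilon}(x_0)}\lambda_n|\phi_\epsilon|^{\frac{n}{n-1}}\,dx
\;\gtrsim\; \frac{\lambda_n\|G_\alpha\|_{L^{n/(n-1)}(\Omega)}^{n/(n-1)}}{C^{\,n/(n-1)^2}},
\end{equation*}
which decays strictly slower than the $O(C^{-n/(n-1)})$ corrections coming from the bubble region (since $n/(n-1)^2<n/(n-1)$) and therefore dominates for $\epsilon$ small. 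This is the same mechanism as in the unperturbed case and does not hinge on $\alpha>0$. So the step you flag as the main obstacle is indeed the step where your proposal goes wrong: the eigenfunction-based outer profile and the ``$\alpha$-gain beats Robin loss'' heuristic would not yield the contradiction, and you would need to replace them by the Green-function matching and the $L^{n/(n-1)}$-tail estimate as in the paper.
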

When $F(x)=|x|$,  Adimurthi and Druet \cite{AD}, Y.Y Yang \cite{Y1} have proved the above theorem. But the $F(x)\neq |x|$, it is more different, need much more delicate work. Now we describe the main idea to prove Theorem \ref{1-04} and Theorem \ref{1-05}. The proof of point (2) of Theorem \ref{1-04} is base on test functions computations which are present in Section 2. The point of point (1) of Theorem \ref{1-04} is based on the blow up analysis. The proof of Theorem \ref{1-05} is based on two facts: an upper bound of $J_{\lambda_{n}}^{\alpha}$ on $\mathcal{H}$ can be derived under the assumption that blowing up occur; a sequence of functions $\phi_{\epsilon}\in \mathcal{H}$ can be constructed to show that the above upper bound is not an upper bound. This contradiction implies that no blowing up occur, and then Theorem \ref{1-05} holds. Though the method we carry out blowing up analysis is routine, we will encounter new difficulties when $0<\alpha<\lambda_{1}(\Omega)$

We organize this paper as follows. In Section 2, we gives some notes about anisotropic function $F(x)$ and the properties of the function $F(x)$, moreover, we prove point (2) of Theorem \ref{1-04}. we use blowing up analysis to prove point of (1) of Theorem \ref{1-04} in section 3 and section 4. An upper bound of $J_{\lambda_{n}}^{\alpha}$ is derived ,moreover a sequence of functions is constructed to reach a contradiction in section 5, which completes the proof of Theorem \ref{1-05}. In section 6, we show the asymptotic representation of certain Green function which has been used in Section 5.
\section{notations and preliminaries }
In this section, we will give the notations and preliminaries.

Throughout this paper, let  $F :\R^n\mapsto \R$  be a nonnegative convex function of class $C^{2}(\R^n\backslash\{0\})$  which is even and positively homogenous of degree  $1$,  so that
$$F(t\xi)=|t|F(\xi)\qquad \text{for any}\qquad t\in \R,~~~~\xi\in \R^n.$$
A typical example is  $F(\xi)=(\sum_{i}|\xi_i|^{q})^{\frac{1}{q}}$  for  $q\in [1,\infty)$.  We further assume that
$$F(\xi)>0\qquad \text{for any}\qquad\xi\neq 0.$$

Thanks to homogeneity of  $F$,  there exist two constants  $0<a\leq b<\infty$  such that
\begin{equation*}
a|\xi|\leq F(\xi)\leq b|\xi|.
\end{equation*}
 Usually, we shall assume that the $Hess(F^{2})$ is positive definite in  $\R^n\backslash\{0\}$. Then by Xie and Gong \cite{XG}, $Hess(F^{n})$ is also positive definite in  $\R^n\backslash\{0\}$.
If we consider the minimization problem
$$
\min_{u\in W^{1,n}_0(\Omega)}\int_{\Omega}F^n(\nabla u)dx,
$$
its Euler equation contains an  operator of the form
$$
Q_{n}u:=\sum_{i=1}^{i=n}\frac{\partial}{\partial x_i}(F^{n-1}(\nabla u)F_{\xi_i}(\nabla u)).
$$ Note that these operators are not linear unless $F$ is the Euclidean norm in dimension two. We call this nonlinear operator as n-Finsler-Laplacian. This operator $Q_{n}$ was studied by many mathematicians, see \cite{WX,FK,WX,AVP,BFK,XG} and the references therein.

Consider the map
$$\phi:S^{n-1}\rightarrow \R^n, ~~~~~ \phi(\xi)=F_{\xi}(\xi).$$
Its image  $\phi(S^{n-1})$  is a smooth, convex hypersurface in  $\R^n$, which is called Wulff shape of  $F$.
Let  $F^{o}$  be the support function of  $K:=\{x\in \R^n:F(x)\leq 1\}$,  which is defined by
$$F^{o}(x):=\sup_{\xi\in K}\langle x,\xi\rangle.$$
It is easy to verify that  $F^{o}:\R^n\mapsto [0,+\infty)$  is also a convex, homogeneous function of class of $C^{2}(\R^n\backslash\{0\})$.  Actually  $F^{o}$  is dual to   $F$  in the sense that
$$F^{o}(x)=\sup_{\xi\neq 0}\frac{\langle x,\xi\rangle}{F(\xi)},\qquad F(x)=\sup_{\xi\neq 0}\frac{\langle x,\xi\rangle}{F^{o}(\xi)}.$$
One can see easily that  $\phi(S^{n-1})=\{x\in \R^n~|F^{o}(x)=1\}$. We denote $\mathcal{W}_{F}:=\{x\in \R^n|F^{o}(x)\leq 1\}$  and  $\kappa_{n}:=|\mathcal{W}_{F}|$,  the Lebesgue measure of  $ \mathcal{W}_{F}$.  We also use the notion  $\mathcal{W}_{r}(0):=\{x\in \R^n|F^{o}(x)\leq r\}$.  We call $\mathcal{W}_{r}(0)$  a Wulff ball of radius  $r$  with center at  $0$.
For later use, we give some simple properties of the function  $F$,  which follows directly from the assumption on  $F$, also see \cite{WX1,FK,BP}.
\begin{lm}\label{2-01}
We have
\begin{enumerate}
\item[(i)]$|F(x)-F(y)|\leq F(x+y)\leq F(x)+F(y)$;
\item[(ii)]$ \frac{1}{C}\leq|\nabla F(x)|\leq C $, and $ \frac{1}{C}\leq|\nabla F^{o}(x)|\leq C $  for some $C>0$ and any $x\neq 0$;
\item[(iii)]$\langle x,\nabla F(x)\rangle=F(x),\langle x,\nabla F^{o}(x)\rangle=F^{o}(x)$ for any $x\neq 0$;
\item[(iv)]$F(\nabla F^{o}(x))=1$, $F^{o}(\nabla F(x))=1$ for any $x\neq 0$;
\item[(v)]$F^{o}(x) F_{\xi}(\nabla F^{o}(x))=x $ for any $ x\neq 0$;
\item[(vi)]$F_\xi(t\xi)= \text{sgn}(t)F_{\xi}(\xi)$ for any $\xi\neq 0$ and $t\neq 0$.
\end{enumerate}
\end{lm}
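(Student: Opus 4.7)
The plan is to verify the six identities one by one, leveraging only three core inputs: the positive $1$-homogeneity of $F$ (and of $F^o$), the convexity (strict, since $\mathrm{Hess}(F^2)$ is positive definite), and Euler's homogeneous-function identity together with its differentiated form. I would organize the proof so the easier identities are dispatched first and the genuine duality statement (v) comes last.

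For (i), I would obtain the right-hand triangle inequality from convexity by writing $x+y=\tfrac{F(x)}{F(x)+F(y)}\cdot\tfrac{(F(x)+F(y))x}{F(x)}+\tfrac{F(y)}{F(x)+F(y)}\cdot\tfrac{(F(x)+F(y))y}{F(y)}$ and applying convexity together with $1$-homogeneity; the left-hand inequality then follows by writing $F(x)=F((x+y)+(-y))$ and using evenness $F(-y)=F(y)$. For (iii), Euler's identity is immediate from differentiating $F(tx)=tF(x)$ in $t$ at $t=1$, and likewise for $F^o$. Claim (vi) splits into two cases: $0$-homogeneity of $F_\xi$ (obtained by differentiating $F(t\xi)=tF(\xi)$ in $\xi$ for $t>0$) gives the statement for $t>0$; for $t<0$, differentiating the evenness relation $F(-\xi)=F(\xi)$ yields $F_\xi(-\xi)=-F_\xi(\xi)$, and the result follows. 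Claim (ii) is obtained by noting that $\nabla F$ and $\nabla F^o$ are $0$-homogeneous and hence determined by their restrictions to $S^{n-1}$; compactness of $S^{n-1}$ plus the fact that $\nabla F(x)\ne 0$ (otherwise (iii) would give $F(x)=0$, contradicting $F(x)>0$ for $x\ne 0$) delivers the two-sided bounds.

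The heart of the lemma lies in (iv) and (v), where the duality between $F$ and $F^o$ must be used. For (iv), setting $\xi=x$ in the variational definition $F^o(\nabla F(x))=\sup_{\xi\ne 0}\langle \nabla F(x),\xi\rangle/F(\xi)$ yields $F^o(\nabla F(x))\ge \langle \nabla F(x),x\rangle/F(x)=1$ by (iii). For the reverse, I would apply the supporting-hyperplane inequality coming from convexity, $F(\xi)\ge F(x)+\langle \nabla F(x),\xi-x\rangle$, combined once again with Euler to cancel $F(x)-\langle \nabla F(x),x\rangle$; this yields $\langle \nabla F(x),\xi\rangle\le F(\xi)$ for all $\xi$, hence $F^o(\nabla F(x))\le 1$. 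The companion identity $F(\nabla F^o(x))=1$ is obtained by swapping the roles of $F$ and $F^o$.

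The main obstacle, and the statement where I expect the bookkeeping to be most delicate, is (v). My plan is to differentiate the identity $F(\nabla F^o(x))=1$ from (iv) in $x$, which yields $\nabla^2 F^o(x)\cdot \nabla F(\nabla F^o(x))=0$. Combining this with the second Euler identity (differentiate $\langle x,\nabla F^o(x)\rangle=F^o(x)$ again, or directly differentiate the $0$-homogeneity of $\nabla F^o$) gives $\nabla^2 F^o(x)\cdot x=0$, so both $x$ and $\nabla F(\nabla F^o(x))$ lie in the kernel of $\mathrm{Hess}\,F^o(x)$. Strict convexity of $F^o$ (inherited from positive-definiteness of $\mathrm{Hess}(F^n)$ and standard implications between Hessians of powers of a homogeneous function, as in Xie--Gong \cite{XG}) makes this kernel one-dimensional on $\R^n\setminus\{0\}$, so $\nabla F(\nabla F^o(x))=c(x)\,x$ for a scalar $c(x)$. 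Pairing with $\nabla F^o(x)$ and using (iii) twice, together with (iv), yields $1=F(\nabla F^o(x))=\langle \nabla F(\nabla F^o(x)),\nabla F^o(x)\rangle=c(x)\langle x,\nabla F^o(x)\rangle=c(x)F^o(x)$, so $c(x)=1/F^o(x)$, and (v) follows. The careful point is justifying that the kernel of $\mathrm{Hess}\,F^o$ at $x$ is exactly the line through $x$; this is where the strict-convexity hypothesis on $F$ (transferred to $F^o$ via the duality) is indispensable.
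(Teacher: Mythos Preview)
The paper does not actually prove this lemma: immediately before the statement it says the properties ``follow directly from the assumption on $F$, also see \cite{WX1,FK,BP}'' and gives no argument of its own. Your detailed verification therefore goes well beyond what the paper supplies, and your treatments of (i)--(iv) and (vi) are the standard ones and are correct.

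For (v), your kernel argument is valid, but the step asserting that $\ker\mathrm{Hess}\,F^{o}(x)$ is one-dimensional rests on positive-definiteness of $\mathrm{Hess}\bigl((F^{o})^{2}\bigr)$, which you import from the hypothesis on $F$ ``via duality.'' That passage is true but is itself a nontrivial fact about polar norms, so as written it is the one place where your proof leans on an unproved external statement of roughly the same depth as (v) itself. A self-contained alternative: by (iii) and (iv), $\nabla F^{o}(x)$ lies on $\{F=1\}$ and attains the supremum $\sup_{F(\xi)\le 1}\langle x,\xi\rangle=F^{o}(x)$; the Lagrange condition then gives $x=\lambda\,\nabla F(\nabla F^{o}(x))$ for some scalar $\lambda$, and pairing with $\nabla F^{o}(x)$ together with (iii)--(iv) yields $\lambda=F^{o}(x)$. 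This avoids any appeal to the Hessian of $F^{o}$.
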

Next we describe the isoperimetric inequality and co-area formula with respect to $F$.  For a domain  $\Omega\subset \R^n$,  a subset  $E\subset \Omega$  and a function of bounded variation  $u\in BV(\Omega)$, we define the anisotropic bounded variation of $u$ with respect to $F$ is
$$\int_{\Omega}|\nabla u|_{ F}=\sup\{\int_{\Omega}u \text{ div}\sigma dx,  \sigma\in C_{0}^{1}(\Omega; \R^n), F^{o}(\sigma)\leq 1\}.$$
We set anisotropic perimeter of  $E$  with respect to   $F$ is
$$P_{F}(E):=\int_{\Omega}|\nabla \mathcal{X}_{E}|_{F},$$
where  $\mathcal{X}_{E}$  is the characteristic function of the set  $E$.  It is well known (also see \cite {FM}) that the co-area formula
\begin{equation}\label{2-02}
\int_{\Omega}|\nabla u|_{ F}=\int_{0}^{\infty}P_{F}(|u|>t)dt
\end{equation}
and the isoperimetric inequality
\begin{equation}\label{2-03}
P_{F}(E)\geq n\kappa_{n}^{\frac{1}{n}}|E|^{1-\frac{1}{n}}
\end{equation}
hold. Moreover, the equality in (\ref{2-03}) holds if and only if  $E$  is a Wulff ball.

In the sequel, we will use the convex symmetrization with respect to $F$. The convex symmetrization generalizes the Schwarz symmetrization (see \cite{T3}). It was defined in \cite{AVP} and will be an essential tool for establishing  the Lions type concentration-compactness alternative under the anisotropic Dirichlet norm.
Let us consider a measureed function  $u$  on  $\Omega\subset \R^n$.  The one dimensional decreasing rearrangement of $u$  is
$$u^{*}=\sup\{s\geq 0: |\{x\in\Omega:|u(x)|>s\}|>t\},\qquad \text{for} \quad  t\in \R.$$
The convex symmetrization of  $u$  with respect to  $F$  is defined as
$$u^{\star}(x)=u^{*}(\kappa_{n} F^{o}(x)^{n}),\qquad\text{ for } x\in \Omega^{*}.$$
Here  $\kappa_{n} F^{o}(x)^{n}$  is just Lebesgue measure of a homothetic Wulff ball with radius $F^{0}(x)$ and $\Omega^{*}$  is the homothetic Wulff ball centered at the origin having the same measure as $\Omega$. Throughout this paper, we assume that $\Omega$ is bounded smooth domain in $\mathbb{R}^{n}$ with $n\geq 2$.

The following properties about Finsler-Laplacian can be found in \cite{ZZ1}:
\begin{lm}\label{2-04}
Assume that $u\in W_{0}^{1,n}(\Omega)$ is a solution to the equation
\begin{equation}\label{2-05}
-Q_{n}(u)=f.
\end{equation}
If $f\in L^{q}(\Omega)$ for some $q>1$, then $||u||_{L^{\infty}(\Omega)}\leq C||f||_{L^{q}(\Omega)}^{\frac{1}{n-1}}$, where $C$ is only depends  on $a, b, n, \Omega,q $.
\end{lm}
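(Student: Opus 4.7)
The plan is a Moser-type iteration adapted to the anisotropic $n$-Laplacian $Q_n$. The structural ingredients I will use are Euler's homogeneity identity $F^{n-1}(\xi)F_\xi(\xi)\cdot\xi=F^n(\xi)$ from Lemma \ref{2-01}(iii), the positive-definiteness of $\operatorname{Hess}(F^n)$, and the anisotropic Sobolev-type embedding $\|v\|_{L^r(\Omega)}\le C\,r^{(n-1)/n}\|F(\nabla v)\|_{L^n(\Omega)}$ valid for every finite $r\ge n$. The embedding follows by Taylor-expanding the Wang--Xia inequality (\ref{1-02}) term-by-term and invoking Stirling's formula, and ultimately rests on the isoperimetric inequality (\ref{2-03}) together with the co-area formula (\ref{2-02}).

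Since $Q_n$ is odd and $F$ is even, it suffices to bound $u_+$ (the case of $u_-$ being symmetric); I assume $u\ge 0$. For a parameter $\beta\ge 1$, I test $-Q_n u = f$ against the truncation $\varphi=(u\wedge k)^{(n-1)\beta+1}\in W^{1,n}_0(\Omega)\cap L^\infty(\Omega)$, letting $k\to\infty$ at the end. Euler's identity turns the energy into $((n-1)\beta+1)\int u^{(n-1)\beta}F^n(\nabla u)\,dx$. Setting $v = u^{\alpha}$ with $\alpha=\frac{(n-1)\beta+n}{n}$, the $1$-homogeneity of $F$ gives $F^n(\nabla v)=\alpha^n u^{(n-1)\beta}F^n(\nabla u)$, and H\"older's inequality produces
\begin{equation*}
\int_\Omega F^n(\nabla v)\,dx \;\le\; \frac{\alpha^n}{(n-1)\beta+1}\,\|f\|_{L^q}\,\|u\|_{L^{q'((n-1)\beta+1)}}^{(n-1)\beta+1}.
\end{equation*}
Combining this with $\|v\|_{L^r}^n\le C r^{n-1}\int_\Omega F^n(\nabla v)\,dx$ and re-expressing in $u$ yields the Moser recursion
\begin{equation*}
\|u\|_{L^{\alpha r}}^{\alpha n} \;\le\; C\,r^{n-1}\,\frac{\alpha^n}{(n-1)\beta+1}\,\|f\|_{L^q}\,\|u\|_{L^{q'((n-1)\beta+1)}}^{(n-1)\beta+1}.
\end{equation*}

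Choosing $r$ to be a fixed large multiple of $nq'$ ensures that the new exponent $\alpha r$ dominates the old exponent $q'((n-1)\beta+1)$ by a uniform geometric factor, generating $p_0<p_1<\cdots\to\infty$ with $p_{k+1}\ge\mu p_k$ for some $\mu>1$. The initial bound at $p_0$ comes from testing the equation against $u$ itself: one obtains $\int F^n(\nabla u)\,dx = \int fu\,dx \le \|f\|_{L^q}\|u\|_{L^{q'}}$, and combining this with the anisotropic Poincar\'e inequality (available because $\lambda_1(\Omega)>0$) yields $\|F(\nabla u)\|_{L^n}\le C\|f\|_{L^q}^{1/(n-1)}$, whence (\ref{1-02}) gives $\|u\|_{L^{p_0}}\le C\|f\|_{L^q}^{1/(n-1)}$ for any fixed finite $p_0$. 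The main technical obstacle I expect is the bookkeeping: (i) verifying that the cumulative constant $\prod_k\bigl[C r^{n-1}\alpha_k^n/((n-1)\beta_k+1)\bigr]$ weighted by the appropriate decaying iteration exponents stays bounded; (ii) removing the truncation $u\wedge k$ at each stage, which requires membership of $u$ in $L^{(n-1)\beta_k+1}$ supplied inductively by the previous iterate; and (iii) pinning the final exponent of $\|f\|_{L^q}$ to be exactly $\frac{1}{n-1}$---most cleanly argued via the $(n-1)$-homogeneity of $Q_n$, which forces $u\mapsto\lambda^{1/(n-1)}u$ under $f\mapsto\lambda f$. The resulting constant $C$ inherits its dependence on $a,b,n,\Omega,q$ through Lemma \ref{2-01} (in particular the norm equivalence $a|\xi|\le F(\xi)\le b|\xi|$), the Poincar\'e constant on $\Omega$, and the Moser--Trudinger constant in (\ref{1-02}).
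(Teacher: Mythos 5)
The paper does not prove Lemma~\ref{2-04}; it cites it from \cite{ZZ1}, so there is no in-text argument to compare against, and the assessment has to be of your proposal on its own terms. Your Moser-iteration plan is a correct, classical Serrin-type route, and the three ingredients you single out are exactly what make it run for $Q_n$: Euler's identity from Lemma~\ref{2-01}(iii) reduces the tested equation to $((n-1)\beta+1)\int_\Omega u^{(n-1)\beta}F^n(\nabla u)\,dx$, the $1$-homogeneity of $F$ lets you repackage that as $\alpha^{-n}\int_\Omega F^n(\nabla(u^\alpha))\,dx$, and the embedding $\|v\|_{L^r}\le C\,r^{(n-1)/n}\|F(\nabla v)\|_{L^n}$ extracted from (\ref{1-02}) via Stirling plays the role that the subcritical Sobolev inequality plays below the critical dimension. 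The scaling remark $Q_n(\lambda u)=\lambda^{n-1}Q_n(u)$ is indeed the clean way to fix the exponent $\frac{1}{n-1}$: normalize $\|f\|_{L^q}=1$ at the outset, and then you only need $\sup_k\|u\|_{L^{p_k}}<\infty$ as $p_k\to\infty$, which your bookkeeping (geometric growth of $p_k$ with a fixed $r>nq'$, summable $\sigma_k\log D_k$, convergent $\prod\theta_k$) delivers.

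Two spots to tighten when writing it out. First, the reduction to $u\ge 0$: rather than appealing to oddness of $Q_n$ abstractly, test the equation with $(u^\pm\wedge k)^{(n-1)\beta+1}$ directly; by Lemma~\ref{2-01}(vi) and the evenness of $F$, both signs produce the same energy $\int_{\{\pm u>0\}}(u^\pm\wedge k)^{(n-1)\beta}F^n(\nabla u)\,dx$, so the two pieces are handled identically. Second, the initial step: when $q$ is close to $1$ one has $q'>n$, and the Poincar\'e inequality alone does not control $\|u\|_{L^{q'}}$; you need the same (\ref{1-02})-based embedding there, giving $\|u\|_{L^{q'}}\le C\|F(\nabla u)\|_{L^n}$, after which $\|F(\nabla u)\|_{L^n}^n=\int_\Omega fu\,dx\le\|f\|_{L^q}\|u\|_{L^{q'}}$ closes to $\|F(\nabla u)\|_{L^n}\le C\|f\|_{L^q}^{1/(n-1)}$. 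Finally, it is worth pointing out that the paper sets up the anisotropic coarea formula (\ref{2-02}), isoperimetric inequality (\ref{2-03}) and convex symmetrization, so a Talenti-type rearrangement proof is also available: convexly symmetrize, compare $u^\star$ with the radial solution of $-Q_n v=f^\star$ on $\Omega^\star$, and integrate the resulting ODE in $F^o$, which stays bounded at the origin precisely because $q>1$. That rearrangement argument is likely the one in \cite{ZZ1} and is shorter in this symmetric setting; your iteration is more flexible and equally valid.
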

Set
\begin{equation*}
P=
\left\{
\begin{array}{lcc}
(1-\int_{\Omega}F^{n}(\nabla u)dx)^{-\frac{1}{n-1}}\qquad&\int_{\Omega}F^{n}(\nabla u)dx<1,\\
\infty\qquad&\int_{\Omega}F^{n}(\nabla u)dx=1.
\end{array}
\right.
\end{equation*}
\begin{lm}\label{3-002}
Let $u\in W_{0}^{1,n}(\Omega)$, $u\neq 0$. Assume that  $\{u_{k}\}$ is  a sequence of functions $ W_{0}^{1,n}(\Omega)$ such that $\int_{\Omega}F^{n}(\nabla u_{k})dx\leq 1$, and $$ u_{k}\rightharpoonup u\quad \text{weakly in}\quad W_{0}^{1,n}(\Omega),\quad  u_{k}\rightarrow u\quad  \text { a.e. in } \Omega.$$
Then for every $p<P$, there exists a constant $C=C(n,p)$ such that for each $k$
\begin{equation}\label{3-005}
\int_{\Omega}exp(n^{\frac{n}{n-1}}\kappa_{n}^{\frac{1}{n-1}}p|u_{k}|^{\frac{n}{n-1}})dx\leq C.
\end{equation}
Moreover, this conclusion fails if $p\geq P$.
\end{lm}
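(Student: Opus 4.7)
The plan is to adapt the classical Lions concentration-compactness argument to the Finsler setting, using the Wang-Xia inequality~(\ref{1-02}) as the exponential-integrability input. Write $v_k:=u_k-u$, so that $v_k\rightharpoonup 0$ in $W_0^{1,n}(\Omega)$ and $v_k\to 0$ a.e.\ in $\Omega$. The argument has three ingredients: a Brezis-Lieb-type splitting for the anisotropic Dirichlet energy, an elementary convex inequality combined with Hölder's inequality to reduce to (\ref{1-02}) applied to $v_k$, and a Moser-bubble construction for the sharpness part.

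The first step is to show
$$\limsup_{k\to\infty}\int_\Omega F^n(\nabla v_k)\,dx\le 1-\int_\Omega F^n(\nabla u)\,dx,$$
equivalently $\limsup_k\|F(\nabla v_k)\|_{L^n(\Omega)}^{n/(n-1)}\le 1/P$. Since only $u_k\to u$ a.e.\ (not $\nabla u_k\to\nabla u$) is assumed, the ordinary Brezis-Lieb lemma does not immediately apply to $F^n(\nabla\cdot)$. I would instead exploit the positivity of $\mathrm{Hess}(F^n)$ on $\R^n\setminus\{0\}$ (recalled in Section~2), which yields the convexity inequality $F^n(\nabla u_k)\ge F^n(\nabla v_k)+\langle DF^n(\nabla v_k),\nabla u\rangle$; integrating and combining with a Minty or Young-measure argument to control $\int\langle DF^n(\nabla v_k),\nabla u\rangle\,dx$ in the limit, one recovers the desired splitting.

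With the splitting in hand, fix $p<P$ and choose $\varepsilon>0$ and $q>1$ close to $1$ so that $q(1+\varepsilon)p<P$. The elementary inequality $|a+b|^{n/(n-1)}\le(1+\varepsilon)|a|^{n/(n-1)}+C_\varepsilon|b|^{n/(n-1)}$ gives $|u_k|^{n/(n-1)}\le(1+\varepsilon)|v_k|^{n/(n-1)}+C_\varepsilon|u|^{n/(n-1)}$, and Hölder's inequality with exponents $q,q'$ yields
$$\int_\Omega e^{\lambda_n p|u_k|^{n/(n-1)}}\,dx\le\Bigl(\int_\Omega e^{q\lambda_n p(1+\varepsilon)|v_k|^{n/(n-1)}}\,dx\Bigr)^{1/q}\Bigl(\int_\Omega e^{q'\lambda_n pC_\varepsilon|u|^{n/(n-1)}}\,dx\Bigr)^{1/q'}.$$
The second factor is finite because $u$ is fixed in $W_0^{1,n}(\Omega)$ and $\int e^{\alpha|u|^{n/(n-1)}}<\infty$ for every $\alpha>0$. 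For the first, set $\tilde v_k:=v_k/\|F(\nabla v_k)\|_{L^n}$ so that $\|F(\nabla\tilde v_k)\|_{L^n}=1$; the exponent becomes $\lambda_n\,qp(1+\varepsilon)\|F(\nabla v_k)\|_{L^n}^{n/(n-1)}|\tilde v_k|^{n/(n-1)}$, and by the splitting step the coefficient $qp(1+\varepsilon)\|F(\nabla v_k)\|_{L^n}^{n/(n-1)}$ is strictly less than~$1$ for large~$k$, so (\ref{1-02}) applied to $\tilde v_k$ bounds the first factor uniformly.

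For the failure when $p\ge P$ (with $P<\infty$), I would construct a counterexample using an anisotropic Moser bubble $\m_k$ centered at a point $x_0\in\Omega$ (chosen so that $u(x_0)$ is small), supported in a small Wulff ball $\mathcal{W}_r(x_0)\subset\Omega$, and normalized so that $\int F^n(\nabla\m_k)\,dx=1-\int F^n(\nabla u)\,dx=1/P^{n-1}$. After modifying $u$ slightly via $C_c^\infty$-density so that it vanishes on $\mathcal{W}_r(x_0)$, setting $u_k:=u+\m_k$ gives $u_k\rightharpoonup u$, $u_k\to u$ a.e.\ and $\int F^n(\nabla u_k)\,dx\le 1$; the standard anisotropic Moser calculation, as in the sharpness of (\ref{1-02}), yields $\int e^{\lambda_n p|u_k|^{n/(n-1)}}\,dx\to\infty$ exactly at the threshold $p=P$. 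The main obstacle is the energy-splitting step, where the absence of a.e.\ gradient convergence forces one to rely on the strict convexity of $F^n$ (via $\mathrm{Hess}(F^n)>0$) and a monotonicity or Young-measure argument; the remaining steps are a faithful transcription of the isotropic Adimurthi-Druet and Yang proofs to the Finsler framework.
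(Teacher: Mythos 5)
The paper states this lemma without proof (it is the anisotropic form of the Lions/\v{C}ern\'y--Cianchi--Hencl concentration--compactness alternative), so the review below concerns only the internal soundness of your plan.

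Your first step, the energy splitting $\limsup_k\int_\Omega F^n(\nabla v_k)\,dx\le 1-\int_\Omega F^n(\nabla u)\,dx$ with $v_k=u_k-u$, is the weak link: it is actually \emph{false} in general when only $u_k\to u$ a.e.\ is known, and your convexity/Minty/Young-measure remedy cannot produce it. After integrating $F^n(\nabla u_k)\ge F^n(\nabla v_k)+\langle DF^n(\nabla v_k),\nabla u\rangle$ and using $\int F^n(\nabla u_k)\le1$, the desired splitting would require $\liminf_k\int_\Omega\langle DF^n(\nabla v_k),\nabla u\rangle\,dx\ge\int_\Omega F^n(\nabla u)\,dx$. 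But $DF^n$ is $(n-1)$-homogeneous with $DF^n(0)=0$, so the left side tends to $0$ along any subsequence with $\nabla v_k\to0$ in $L^n$, which would force $\int F^n(\nabla u)\le0$; there is no Minty-type structure (the $v_k$ solve no monotone equation), and the Young-measure limit $\int DF^n(\xi)\,d\nu_x(\xi)$ of $DF^n(\nabla v_k)$ is essentially unconstrained by $\nabla v_k\rightharpoonup0$. Worse, the splitting itself can fail: take $n=3$, $F=|\cdot|$, $\nabla u\equiv a$ on a sub-cube $Q$, and a vanishing-amplitude sawtooth $v_k$ whose gradient equals $-3a$ on one quarter of $Q$ and $a$ on the remaining three quarters. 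Then $v_k\to0$ uniformly, $\nabla v_k\rightharpoonup0$, yet $\int_Q|\nabla v_k|^3\to\tfrac{15}{2}|a|^3|Q|$ while $\int_Q|\nabla(u+v_k)|^3\to8|a|^3|Q|$ and $\int_Q|\nabla u|^3=|a|^3|Q|$, so that $\tfrac{15}{2}>8-1$ and the claimed inequality is violated. Brezis--Lieb bookkeeping for $F^n(\nabla\cdot)$ genuinely needs a.e.\ gradient convergence, which you do not have.

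The standard fix is to split by truncation rather than by subtracting $u$. Set $T_R(u_k)=\mathrm{sgn}(u_k)\min(|u_k|,R)$ and $w_k=(|u_k|-R)^+$. Since $T_R$ is $1$-Lipschitz and $u_k\to u$ a.e., one has $T_R(u_k)\rightharpoonup T_R(u)$ in $W_0^{1,n}(\Omega)$, and weak lower semicontinuity of the convex functional $w\mapsto\int_\Omega F^n(\nabla w)\,dx$ gives
$$\liminf_{k\to\infty}\int_\Omega F^n(\nabla T_R(u_k))\,dx\ge\int_\Omega F^n(\nabla T_R(u))\,dx,$$
hence $\limsup_k\int_\Omega F^n(\nabla w_k)\,dx\le1-\int_\Omega F^n(\nabla u)\,dx+\delta_R$ with $\delta_R\to0$ as $R\to\infty$. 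Running your step-two inequality with $w_k$ in place of $v_k$ (so that the second H\"older factor is the harmless constant $e^{\lambda_n p q'C_\varepsilon R^{n/(n-1)}}|\Omega|$) and applying (\ref{1-02}) to $w_k$ then yields the bound for every $p<P$; the oscillatory example above is invisible to this argument because there $|u_k|$ stays bounded, so $w_k\equiv0$ once $R$ is large. I would also note that, after modifying $u$ to vanish near $x_0$, your bubble construction $u_k=u+\m_k$ shows divergence only for $p>P$: at $p=P$ the Moser bubble alone gives a finite integral, so the endpoint requires an extra device, for instance keeping $u(x_0)>0$ and extracting the divergent cross factor coming from the expansion of $|u+\m_k|^{n/(n-1)}$.
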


{\bf{Test functions computations}}\\
In this subsection we will prove the conclusion (2) of Theorem \ref{1-04}. we will build explicit test functions to show the unboundedness of Moser-Trudinger function under large parameter.

Since the Moser-Trudinger inequality is invariant under translation, we may assume that $0\in \Omega$ and $\mathcal{W}_{1}\subset \Omega$. We now fix some $x_{\delta}\in \mathcal{W}_{1} $ such that $F^{o}(x_{\delta})=\delta$. Choosing $t_{\epsilon}$ such that
$t_{\epsilon}^{n}\log\frac{1}{\epsilon}\rightarrow \infty$ and $t_{\epsilon}^{n+1}\log\frac{1}{\epsilon}\rightarrow 0$.Set
\begin{equation*}
\varphi_{\epsilon}(x)=
\left\{
\begin{array}{lcc}
(\frac{n}{\lambda_{n}}\log\frac{1}{\epsilon})^{\frac{n}{n-1}}, &F^{0}(x)\leq \epsilon,\\
\frac{(\frac{n}{\lambda_{n}}\log\frac{1}{\epsilon})^{\frac{n}{n-1}}(\log\delta-\log F^{0}(x))-t_{\epsilon}\varphi(x_{\delta})(\log \epsilon-\log F^{0}(x))}{\log\delta-\log\epsilon},&\epsilon<F^{0}(x)\leq \delta,\\
t_{\epsilon}[\varphi(x_{\delta})+\theta(x)(\varphi(x)-\varphi(x_{\delta})),&F^{0}(x)>\delta.
\end{array}
\right.
\end{equation*}
In above definition of $\varphi_{\epsilon}(x)$, $\varphi$ is the eigenvalue function. $\theta(x)\in C^{2}(\Omega)$ is a cut-off function satisty $|\nabla \theta(x)|\leq \frac{C}{\delta}$ and
\begin{equation}\label{1-007}
\theta(x)=
\left\{
\begin{array}{lcc}
0, &F^{0}(x)\leq \delta\\
\theta\in (0,1), &\delta<F^{0}(x)<2\delta\\
1,                &F^{0}(x)\geq 2\delta.
\end{array}
\right.
\end{equation}
Let $\delta=\frac{1}{t_{\epsilon}^{n}}\frac{1}{\log\frac{1}{\epsilon}}$, it is easy to see $\epsilon<\delta$ if $\epsilon$ is small enough. We obtain that
\begin{eqnarray*}
\int_{\epsilon<F^{0}(x)\leq \delta} F^{n}(\nabla\varphi_{\epsilon}(x))dx&=&\int_{\epsilon<F^{0}(x)\leq \delta}
\frac{|-(\frac{n}{\lambda_{n}}\log\frac{1}{\epsilon})^{\frac{n}{n-1}}+t_{\epsilon}\varphi(x_{\delta})|^{n}}{|F^{0}(x)|^{n}(\log\delta-\log\epsilon)^{n}}dx\\
&=&\frac{n\kappa_{n}|-(\frac{n}{\lambda_{n}}\log\frac{1}{\epsilon})^{\frac{n}{n-1}}+t_{\epsilon}\varphi(x_{\delta})|^{n}}{(\log\delta-\log\epsilon)^{n-1}}\\
&=&1-n^{\frac{n+1}{n}}\kappa_{n}^{\frac{1}{n}}(\log\frac{1}{\epsilon})^{-\frac{n-1}{n}}t_{\epsilon}\varphi(x_{\delta})(1+o_{\epsilon}(1)),
\end{eqnarray*}
where $o_{\epsilon}(1)\rightarrow 0 $ as $\epsilon\rightarrow 0$. We also have
\begin{eqnarray*}
\int_{\delta\leq F^{0}(x)\leq 2\delta}F^{n}(\nabla \varphi_{\epsilon}(x))dx&=&t_{\epsilon}^{n}\int_{\delta\leq F^{0}(x)\leq 2\delta}|\theta(x)\nabla\varphi_{\epsilon}+\nabla \theta(x)(\varphi(x)+\varphi(x_{\delta}))|^{n}dx\\
&=&t_{\epsilon}^{n}O(\delta^{n})
\end{eqnarray*}
and
\begin{eqnarray*}
\int_{F^{0}(x)\geq 2\delta}F^{n}(\nabla \varphi_{\epsilon}(x))dx&=&t_{\epsilon}^{n}\int_{F^{0}(x)\geq 2\delta}F^{n}(\nabla \varphi(x))dx\\
&=&t_{\epsilon}^{n}(1+O(\delta^{n})).
\end{eqnarray*}
Summing the above integral estimates for $F^{n}(\nabla \varphi_{\epsilon})$ up, we have
\begin{equation*}
\int_{\Omega}F^{n}(\nabla \varphi_{\epsilon}(x))dx=1-n^{\frac{n+1}{n}}\kappa_{n}^{\frac{1}{n}}(\log\frac{1}{\epsilon})^{-\frac{n-1}{n}}t_{\epsilon}\varphi(x_{\delta})
(1+o_{\epsilon}(1))+t_{\epsilon}^{n}(1+O(\delta^{n})).
\end{equation*}
Then
\begin{equation*}
||F(\nabla \varphi_{\epsilon}(x))||_{L^{n}(\Omega)}^{-\frac{n}{n-1}}=
1+\frac{n^{\frac{n+1}{n}}}{n-1}\kappa_{n}^{\frac{1}{n}}(\log\frac{1}{\epsilon})^{-\frac{n-1}{n}}t_{\epsilon}\varphi(x_{\delta})(1+o_{\epsilon}(1))
-\frac{1}{n-1}t_{\epsilon}^{n}(1+O(\delta^{n})).
\end{equation*}
Set $v_{\epsilon}(x)=\frac{\varphi_{\epsilon}(x)}{||F(\nabla \varphi_{\epsilon}(x))||_{L^{n}(\Omega)}}$, then $||F(\nabla v_{\epsilon}(x))||_{L^{n}(\Omega)}=1$. Furthermore,
\begin{eqnarray*}
\lambda_{1}(\Omega)||v_{\epsilon}(x)||_{L^{n}(\Omega)}^{n}&\geq& \frac{\lambda_{1}(\Omega)t_{\epsilon}^{n}}{||F(\nabla \varphi_{\epsilon}(x))||_{L^{n}(\Omega)}^{n}}\int_{F^{o}(x)\geq 2\delta}|\varphi(x)|^{n}dx\\
&\geq& \lambda_{1}(\Omega)t_{\epsilon}^{n}[||\varphi(x)||_{L^{n}(\Omega)}^{n}+O(\delta^{n})][1+n^{\frac{n+1}{n}}\kappa_{n}^{\frac{1}{n}}(\log\frac{1}{\epsilon})^{-\frac{n-1}{n}}t_{\epsilon}\varphi(x_{\delta})(1+o_{\epsilon}(1))\\
& &-t_{\epsilon}^{n}(1+O(\delta^{n}))]\\
&=&t_{\epsilon}^{n}(\lambda_{1}(\Omega)||\varphi(x)||_{L^{n}(\Omega)}^{n}+O(\delta^{n}))(1+O(t_{\epsilon}^{n}))\\
&=&t_{\epsilon}^{n}(1+O(t_{\epsilon}^{n})+O(\delta^{n})),
\end{eqnarray*}
where we have used  $\lambda_{1}(\Omega)||\varphi(x)||_{L^{n}(\Omega)}^{n}=1$.

Next we establish the integral estimates on the domain of $\{x\in \Omega:F^{o}(x)<\epsilon\}$. We have
\begin{eqnarray*}
\lambda_{n}(1+\lambda_{1}(\Omega)||v_{\epsilon}||_{L^{n}(\Omega)}^{n})^{\frac{1}{n-1}}|v_{\epsilon}|^{\frac{n}{n-1}}&\geq&
n\log\frac{1}{\epsilon}(1+\lambda_{1}(\Omega)||v_{\epsilon}||_{L^{n}(\Omega)}^{n})^{\frac{1}{n-1}}||F(\nabla \varphi_{\epsilon})||_{L^{n}(\Omega)}^{\frac{-n}{n-1}}\\
&=&n\log\frac{1}{\epsilon}(1+t_{\epsilon}^{n}(1+O(t_{\epsilon}^{n})+O(\delta^{n})))^{\frac{1}{n-1}}\\
& &\cdot(1+\frac{n^{\frac{n+1}{n}}}{n-1}\kappa_{n}^{\frac{1}{n}}(\log\frac{1}{\epsilon})^{-\frac{n-1}{n}}t_{\epsilon}\varphi(x_{\delta})(1+o_{\epsilon}(1))\\
& &-\frac{1}{n-1}t_{\epsilon}^{n}(1+O(\delta^{n})))\\
&=&n\log\frac{1}{\epsilon}+\frac{n^{\frac{2n+1}{n}}}{n-1}\kappa_{n}^{\frac{1}{n}}(\log\frac{1}{\epsilon})^{\frac{1}{n}}t_{\epsilon}\varphi(x_{\delta})(1+o_{\epsilon}(1))\\
& &-\frac{n}{n-1}\log\frac{1}{\epsilon}t_{\epsilon}^{n}(1+O(\delta^{n}))+\frac{n}{n-1}\log\frac{1}{\epsilon}t_{\epsilon}^{n}(1+O(t_{\epsilon}^{n}))\\
& &+O(\delta^{n})+o_{\epsilon}(1)\\
&=&n\log\frac{1}{\epsilon}+\frac{n^{\frac{2n+1}{n}}}{n-1}\kappa_{n}^{\frac{1}{n}}(\log\frac{1}{\epsilon})^{\frac{1}{n}}t_{\epsilon}\varphi(0)(1+o_{\epsilon}(1)),
\end{eqnarray*}
where the fact that $\varphi(x_{\delta})=\varphi(0)+o_{\epsilon}(1)$ is applied. Note that $\log\frac{1}{\epsilon}t_{\epsilon}^{n}O(\delta^{n})=o_{\epsilon}(1)$.

Considering the above estimates, we deduce that
\begin{eqnarray*}
\int_{\Omega}exp(\lambda_{n}(1+\lambda_{1}(\Omega)||v_{\epsilon}||_{L^{n}(\Omega)}^{n})^{\frac{1}{n-1}}|v_{\epsilon}|^{\frac{n}{n-1}})dx&\geq& Cexp[\frac{n^{\frac{2n+1}{n}}}{n-1}\kappa_{n}^{\frac{1}{n}}(\log\frac{1}{\epsilon})^{\frac{1}{n}}t_{\epsilon}\varphi(0)(1+o_{\epsilon}(1))]\\
&\rightarrow&+\infty
\end{eqnarray*}
as $\epsilon\rightarrow 0$, since $\varphi(0)>0$ and $(\log\frac{1}{\epsilon})^{\frac{1}{n}}t_{\epsilon}\rightarrow +\infty$. Here $C$ is a positive constant independent of $\epsilon$. The conclusion (2) in Theorem \ref{1-04} holds.

\section{maximizers of the subcritical case }
In this section, we will show the existence of the maximizers for Moser-Trudinger functional in the subcritical case. We begin with the following existence  of the maximizers of the subcritical Moser-Trudinger function.

\begin{prop}\label{3-02}
For any small $\epsilon$ and $0\leq \alpha<\lambda_{1}$, there exists some $u_{\epsilon}\in C^{1}(\overline \Omega)\cap \mathcal{H}$  satisfying
\begin{equation*}
J_{\lambda_{n}-\epsilon}^{\alpha}(u_{\epsilon})=\sup_{u\in\mathcal{H}}J_{\lambda_{n}-\epsilon}^{\alpha}(u).
\end{equation*}

\end{prop}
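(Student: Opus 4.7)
The plan is to apply the direct method. Let $c_\epsilon := \sup_{u \in \mathcal{H}} J^{\alpha}_{\lambda_n - \epsilon}(u)$ and take a maximizing sequence $\{u_k\} \subset \mathcal{H}$. Since $F(\xi)\geq a|\xi|$, $\{u_k\}$ is bounded in $W^{1,n}_{0}(\Omega)$, so after passing to a subsequence we may assume $u_k \rightharpoonup u_\epsilon$ weakly in $W^{1,n}_{0}(\Omega)$, $u_k \to u_\epsilon$ strongly in every $L^{p}(\Omega)$, and $u_k \to u_\epsilon$ almost everywhere.

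The crux of the argument is uniform integrability of $g_k := \exp\bigl((\lambda_n - \epsilon)(1+\alpha\|u_k\|_{L^n}^{n})^{1/(n-1)} |u_k|^{n/(n-1)}\bigr)$. Write $A := \|F(\nabla u_\epsilon)\|_{L^n}^{n}$ and $B := \|u_\epsilon\|_{L^n}^{n}$. The Poincar\'e-type bound $\lambda_1 B \le A$ coming from (\ref{1-004}), combined with the hypothesis $\alpha < \lambda_1$, yields $(1-A)(1+\alpha B) \le 1$, and hence
$$
p_* := \frac{(\lambda_n - \epsilon)(1+\alpha B)^{1/(n-1)}}{\lambda_n} \;<\; (1-A)^{-1/(n-1)} \;=:\; P
$$
(with $P = \infty$ when $A=1$). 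Because $u_k \to u_\epsilon$ in $L^n$, the exponents $p_k := (\lambda_n - \epsilon)(1+\alpha\|u_k\|_{L^n}^{n})^{1/(n-1)}/\lambda_n$ converge to $p_*$. Fix $p \in (p_*, P)$ and then $q_0 \in (1, p/p_*)$; for all large $k$ one has $q_0 p_k < p$, and using $g_k \ge 1$,
$$
\int_{\Omega} g_k^{q_0}\,dx \;\le\; \int_{\Omega} e^{\lambda_n p |u_k|^{n/(n-1)}}\,dx \;\le\; C,
$$
where the last inequality is Lemma~\ref{3-002} when $u_\epsilon \not\equiv 0$, and follows from (\ref{1-02}) combined with H\"older's inequality when $u_\epsilon \equiv 0$ (in which case $P=1$ and $p_*<1$).

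With $\{g_k\}$ uniformly integrable and $u_k \to u_\epsilon$ a.e., Vitali's convergence theorem gives $J^{\alpha}_{\lambda_n - \epsilon}(u_k) \to J^{\alpha}_{\lambda_n - \epsilon}(u_\epsilon) = c_\epsilon$. The case $u_\epsilon \equiv 0$ is ruled out, because it would force $c_\epsilon = |\Omega|$, whereas $\varphi / \|F(\nabla \varphi)\|_{L^n}$ already belongs to $\mathcal{H}$ and produces a strictly larger value of $J^{\alpha}_{\lambda_n - \epsilon}$. To force $\|F(\nabla u_\epsilon)\|_{L^n} = 1$, suppose $A<1$ and set $\widetilde u := u_\epsilon / A^{1/n} \in \mathcal{H}$; a direct expansion shows that the exponent of $g_{\widetilde u}$ strictly exceeds that of $g_{u_\epsilon}$ pointwise on $\{u_\epsilon \neq 0\}$, which has positive measure, contradicting the maximality of $u_\epsilon$.

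Finally, the Euler-Lagrange equation for $u_\epsilon$ on the constraint surface $\mathcal{H}$ has the form $-Q_n u_\epsilon = f_\epsilon$ with $f_\epsilon \in L^{q}(\Omega)$ for some $q>1$, by the integrability already established. Lemma~\ref{2-04} then yields $u_\epsilon \in L^\infty(\Omega)$, and standard $C^{1,\beta}$-regularity for the Finsler $n$-Laplacian (of DiBenedetto/Tolksdorf type) upgrades this to $u_\epsilon \in C^{1}(\overline{\Omega})$. The principal obstacle is the uniform-integrability step; this is precisely where the assumption $\alpha < \lambda_1(\Omega)$ is used in an essential way, via the key inequality $(1-A)(1+\alpha B) \le 1$.
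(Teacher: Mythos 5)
Your argument follows the same route as the paper's proof: direct method, ruling out $u_\epsilon\equiv 0$, using the Poincar\'e bound $\lambda_1\|u_\epsilon\|_{L^n}^n\le\|F(\nabla u_\epsilon)\|_{L^n}^n$ together with $\alpha<\lambda_1$ to get the key inequality that makes Lemma~\ref{3-002} applicable with an exponent slack, then Vitali convergence, the Euler--Lagrange equation, Lemma~\ref{2-04}, and Lieberman-type $C^{1,\beta}$ regularity. The one place you go beyond the paper's written proof is the normalization step: you explicitly verify that the weak limit satisfies $\|F(\nabla u_\epsilon)\|_{L^n}=1$ by comparing with $u_\epsilon/A^{1/n}$ and showing the exponent strictly increases, whereas the paper simply asserts ``$\|\nabla u_\epsilon\|_{L^n}^n=1$'' without justification; your check is correct and worth including.
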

\begin{proof}
For any fixed $\epsilon $, let $\{u_{\epsilon,j}\}$ be a  sequence such that
\begin{equation*}
\lim _{j\rightarrow+\infty}J_{\lambda_{n}-\epsilon}^{\alpha}(u_{\epsilon,j})=\sup_{u\in\mathcal{H}}J_{\lambda_{n}-\epsilon}^{\alpha}(u).
\end{equation*}
Since $u_{\epsilon,j}$ is bounded in $W_{0}^{1,n}(\Omega)$, there exists a subsequence of $u_{\epsilon,j}$ (We do not distinguish subsequence and sequence in the paper) such that
\begin{eqnarray}
 u_{\epsilon,j}&\rightharpoonup& u_{\epsilon}  \text{ weakly in }   W_{0}^{1,n}(\Omega),\nonumber\\
 u_{\epsilon,j}&\rightarrow& u_{\epsilon}   \text{ strongly in }  L^{p}(\Omega),  \text{ for any }   p\geq 1,\nonumber\\
 u_{\epsilon,j}&\rightarrow& u_{\epsilon}   \text{ a.e. }  \Omega\nonumber
\end{eqnarray}
as $j\rightarrow +\infty$. Hence
\begin{equation*}
g_{j}:=exp[(\lambda_{n}-\epsilon)(1+\alpha||u_{\epsilon,j}||_{L^{n}(\Omega)}^{n})^{\frac{1}{n-1}}|u_{\epsilon,j}|^{\frac{n}{n-1}}]\rightarrow
g_{j}:=exp[(\lambda_{n}-\epsilon)(1+\alpha||u_{\epsilon}||_{L^{n}(\Omega)}^{n})^{\frac{1}{n-1}}|u_{\epsilon}|^{\frac{n}{n-1}}]
\end{equation*}
a.e. in $\Omega$.
We claim that $u_{\epsilon}\not\equiv0$, suppose not,$1+\alpha||u_{\epsilon,j}||_{L^{n}(\Omega)}^{n}\rightarrow 1$, from which one can see that $g_{j}$ is bounded in $L^{p}(\Omega)$ for some $p>1$ and $g_{j}\rightarrow 1$ in $L^{1}(\Omega)$. Hence $\sup_{u\in\mathcal{H}}J_{\lambda_{n}-\epsilon}^{\alpha}(u)=|\Omega|$, which is impossible. Therefore $u_{\epsilon}\not\equiv 0$.
Thanks to Lemma \ref{3-002}, for any $q<1/(1-||F(\nabla u_{\epsilon})||_{L^{n}(\Omega)}^{n})^{\frac{1}{n-1}}$, we have
\begin{equation*}
\limsup_{j\rightarrow+\infty}\int_{\Omega}exp[\lambda_{n}q|u_{\epsilon,j}|^{\frac{n}{n-1}}]dx<+\infty.
\end{equation*}
Due to (\ref{1-004}), we get
\begin{equation*}
1+\alpha||u_{\epsilon}||_{L^{n}(\Omega)}^{n}<\frac{1}{1-||F(\nabla u_{\epsilon})||_{L^{n}(\Omega)}^{n}}
\end{equation*}
for $0\leq \alpha<\lambda_{1}$. Thus, $g_{j}$ is bounded in $L^{s}(\Omega)$ for some $s>1$. Since $g_{j}\rightarrow g_{\epsilon}$ a.e. in $\Omega$, we infer that $g_{j}\rightarrow g_{\epsilon}$ strongly in $L^{1}(\Omega)$ as $j\rightarrow +\infty$, Therefore, the extremal function is attained for the case of $\lambda_{n}-\epsilon$ and $||\nabla u_{\epsilon}||_{L^{n}(\Omega)}^{n}=1$.
Clearly we can choose  $u_{\epsilon} \geq 0$. It is not difficult to check that the Euler-Lagrange equation of $u_{\epsilon}$ is
\begin{equation}\label{3-008}
\left\{
\begin{array}{llc}
-Q_{n}u_{\epsilon}=\beta_{\epsilon}\lambda_{\epsilon}^{-1}u_{\epsilon}^{\frac{1}{n-1}}
e^{\alpha_{\epsilon}u_{\epsilon}^{\frac{n}{n-1}}}+\gamma_{\varepsilon}u_{\epsilon}^{n-1},\\
u_{\epsilon}\in W_{0}^{1,n}(\Omega),~~||F(\nabla u_{\epsilon})||_{L^{n}(\Omega)}=1,~~ u_{\epsilon} \geq 0,\\
\alpha_{\epsilon}=(\lambda_{n}-\epsilon)(1+\alpha||u_{\epsilon}||_{L^{n}(\Omega)}^{n})^{\frac{1}{n-1}},\\
\beta_{\epsilon}=(1+\alpha||u_{\epsilon}||_{L^{n}(\Omega)}^{n})/(1+2\alpha||u_{\epsilon}||_{L^{n}(\Omega)}^{n}).\\
\gamma_{\epsilon}=\alpha/(1+2\alpha||u_{\epsilon}||_{L^{n}(\Omega)}^{n}),\\
\lambda_{\epsilon}=\int_{\Omega}u_{\epsilon}^{\frac{n}{n-1}}e^{\alpha_{\epsilon}u_{\epsilon}^{\frac{n}{n-1}}}dx.
\end{array}
\right.
\end{equation}
Since $\beta_{\epsilon}\lambda_{\epsilon}^{-1}u_{\epsilon}^{\frac{1}{n-1}}
e^{\alpha_{\epsilon}u_{\epsilon}^{\frac{n}{n-1}}}$ and $\gamma_{\varepsilon}u_{\epsilon}^{n-1}$ are bounded in $L^{s}(\Omega)$ for some $s>1$, then by Lemma \ref{2-04}, we have  $u_{\epsilon}\in L^{\infty}(\Omega)$. It implies
$\beta_{\epsilon}\lambda_{\epsilon}^{-1}u_{\epsilon}^{\frac{1}{n-1}}
e^{\alpha_{\epsilon}u_{\epsilon}^{\frac{n}{n-1}}}+\gamma_{\varepsilon}u_{\epsilon}^{n-1}\in L^{\infty}(\Omega)$. Then  by Theorem 1 in \cite{L3}, we easily get $u_{\epsilon}\in C^{1,\alpha}(\overline{\Omega})$ for some $\alpha\in (0,1)$, which implies that $u_{\epsilon}\in C^{1}(\overline{\Omega})$

\end{proof}

The following observation is important.

\begin{lm}\label{3-10}
For any $\alpha\in [0,\lambda_{1}(\Omega))$, we have
\begin{equation*}
\lim_{\epsilon\rightarrow 0}J_{\lambda_{n}-\epsilon}^{\alpha}(u_{\epsilon})=\sup_{u\in\mathcal{H}}J_{\lambda_{n}-\epsilon}^{\alpha}(u).
\end{equation*}
\end{lm}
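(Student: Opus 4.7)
The statement as written has $J^{\alpha}_{\lambda_n-\epsilon}$ on the right-hand side, but since $u_\epsilon$ realizes that supremum by Proposition 3.2, the equality would be tautological. The intended identity, and the one needed for the blow-up analysis later, is
\[
\lim_{\epsilon\to 0} J^{\alpha}_{\lambda_n-\epsilon}(u_\epsilon)\;=\;\sup_{u\in\mathcal{H}} J^{\alpha}_{\lambda_n}(u),
\]
saying that the subcritical maxima approach the critical supremum. I would prove this by the standard double-inequality argument based on monotonicity of the integrand in the coefficient $\lambda_n-\epsilon$.

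For the upper bound, I would note that for every $u\in\mathcal{H}$ the integrand of $J^{\alpha}_{\lambda_n-\epsilon}(u)$ is pointwise dominated by that of $J^{\alpha}_{\lambda_n}(u)$, since $\lambda_n-\epsilon<\lambda_n$ and the exponent $(1+\alpha\|u\|_{L^n}^n)^{1/(n-1)}|u|^{n/(n-1)}$ is nonnegative. Combined with Proposition 3.2 this yields
\[
J^{\alpha}_{\lambda_n-\epsilon}(u_\epsilon)\;=\;\sup_{u\in\mathcal{H}} J^{\alpha}_{\lambda_n-\epsilon}(u)\;\le\;\sup_{u\in\mathcal{H}} J^{\alpha}_{\lambda_n}(u),
\]
so that $\limsup_{\epsilon\to 0} J^{\alpha}_{\lambda_n-\epsilon}(u_\epsilon)\le \sup_{u\in\mathcal{H}} J^{\alpha}_{\lambda_n}(u)$.

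For the lower bound I would fix any test function $u\in\mathcal{H}$ and use the maximizing property of $u_\epsilon$ to write $J^{\alpha}_{\lambda_n-\epsilon}(u_\epsilon)\ge J^{\alpha}_{\lambda_n-\epsilon}(u)$. As $\epsilon\to 0^+$ the integrand in $J^{\alpha}_{\lambda_n-\epsilon}(u)$ is monotonically increasing in $\epsilon^{-1}$ (i.e.\ increases as $\epsilon\downarrow 0$) and converges pointwise a.e.\ to the integrand of $J^{\alpha}_{\lambda_n}(u)$. The monotone convergence theorem therefore gives
\[
\lim_{\epsilon\to 0} J^{\alpha}_{\lambda_n-\epsilon}(u)\;=\;J^{\alpha}_{\lambda_n}(u),
\]
valid whether this common value is finite or $+\infty$. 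Consequently $\liminf_{\epsilon\to 0} J^{\alpha}_{\lambda_n-\epsilon}(u_\epsilon)\ge J^{\alpha}_{\lambda_n}(u)$ for each fixed $u\in\mathcal{H}$, and taking the supremum over $u$ closes the argument.

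There is no real obstacle here; the only point meriting care is the uniform-in-$\epsilon$ passage to the pointwise limit, which is handled cleanly by monotone (rather than dominated) convergence, so that the conclusion also covers the case when $\sup_{u\in\mathcal{H}}J^{\alpha}_{\lambda_n}(u)=+\infty$ without requiring any additional integrability. The lemma therefore reduces to combining Proposition 3.2 with these two monotonicity observations.
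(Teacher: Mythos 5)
Your proof is correct and takes essentially the same route as the paper: the upper bound from monotonicity of the integrand in the exponent coefficient, the lower bound by fixing $u\in\mathcal{H}$, passing to the limit in $J^{\alpha}_{\lambda_n-\epsilon}(u)$, and then invoking the maximality of $u_\epsilon$ from Proposition~\ref{3-02}. The only (minor) difference is that you invoke monotone convergence where the paper uses Fatou's lemma for the limit $\lim_{\epsilon\to 0} J^{\alpha}_{\lambda_n-\epsilon}(u)=J^{\alpha}_{\lambda_n}(u)$; both are valid, and your reading of the typo in the statement (the right-hand side should be $\sup_{u\in\mathcal{H}}J^{\alpha}_{\lambda_n}(u)$) matches the paper's evident intent.
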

\begin{proof}
Obviously,
\begin{equation*}
\lim_{\epsilon\rightarrow 0}J_{\lambda_{n}-\epsilon}^{\alpha}(u_{\epsilon})\leq\sup_{u\in\mathcal{H}}J_{\lambda_{n}-\epsilon}^{\alpha}(u).
\end{equation*}
On the other hand, for any  $ u\in W_{0}^{1,n}(\Omega)$ with $||F(\nabla u)||_{L^{n}(\Omega)}\leq 1$, from  Fatou's Lemma and Proposition \ref{3-02} we have
\begin{eqnarray*}
\int_{\Omega}e^{\lambda_{n}|u|^{\frac{n}{n-1}}(1+\alpha||u||_{L^{n}(\Omega)}^{n})^{\frac{1}{n-1}}}dx\leq\liminf_{\epsilon\rightarrow 0}\int_{\Omega}e^{(\lambda_{n}-\epsilon)|u|^{\frac{n}{n-1}}(1+\alpha||u||_{L^{n}(\Omega)}^{n})^{\frac{1}{n-1}}}dx\\
\leq \liminf_{\epsilon\rightarrow 0}\int_{\Omega}e^{(\lambda_{n}-\epsilon)|u_{\epsilon}|^{\frac{n}{n-1}}(1+\alpha||u_{\epsilon}||_{L^{n}(\Omega)}^{n})^{\frac{1}{n-1}}}dx,
\end{eqnarray*}
which implies
\begin{equation*}
\lim_{\epsilon\rightarrow 0}J_{\lambda_{n}-\epsilon}^{\alpha}(u_{\epsilon})\geq\sup_{u\in\mathcal{H}}J_{\lambda_{n}-\epsilon}^{\alpha}(u).
\end{equation*}
Hence the result holds.
\end{proof}

\section{blow-up analysis }

In this section, we consider the convergence of the maximizing sequence in section 3. There are two cases. The one case is that  $M_{\epsilon}= max_{\overline{\Omega}}u_{\epsilon}=u_{\epsilon}(x_{\epsilon})$ is bounded. In this case, it is clear that $u_{\epsilon}$ is bounded in $W_{0}^{1,n}(\Omega)$. Then we can assume without loss of generality
\begin{eqnarray}
&u_{\epsilon}\rightharpoonup u_{0}&\qquad\text{ weakly in }  W_{0}^{1,n}(\Omega),\nonumber \\
&u_{\epsilon}\rightarrow u_{0} &\qquad\text{ strongly in }  L^{q}(\Omega),  \forall q\geq 1,\nonumber\\
&u_{\epsilon}\rightarrow u_{0} &\qquad\text{ a.e. in }  \Omega.\nonumber
\end{eqnarray}
Since, for any   $u\in W_{0}^{1,n}(\Omega)$  with  $\int_{\Omega}F^{n}(\nabla u)dx\leq 1$,
by the Lebesgue dominated convergence theorem  we have
\begin{eqnarray}
 \int_{\Omega}e^{\lambda_{n}|u|^{\frac{n}{n-1}}(1+\alpha||u||_{L^{n}(\Omega)}^{n})^{\frac{1}{n-1}}}dx&=&\lim_{\epsilon\rightarrow 0}\int_{\Omega}e^{(\lambda_{n}-\epsilon)|u|^{\frac{n}{n-1}}(1+\alpha||u||_{L^{n}(\Omega)}^{n})^{\frac{1}{n-1}}}dx\nonumber\\
&\leq& \lim_{\epsilon\rightarrow 0}\int_{\Omega}  \int_{\Omega}e^{(\lambda_{n}-\epsilon)|u_{\epsilon}|^{\frac{n}{n-1}}(1+\alpha||u_{\epsilon}||_{L^{n}(\Omega)}^{n})^{\frac{1}{n-1}}}dx\nonumber\\
&=&\int_{\Omega}e^{\lambda_{n}|u_{0}|^{\frac{n}{n-1}}(1+\alpha||u_{0}||_{L^{n}(\Omega)}^{n})^{\frac{1}{n-1}}}dx,\nonumber
\end{eqnarray}
Hence  $u_{0}$  is the desired maximizer.

The other case is that $M_{\epsilon}= u_{\epsilon}(x_{\epsilon})\rightarrow +\infty$ and $x_{\epsilon}\rightarrow x_{0}$   as  $\epsilon\rightarrow 0$. In this case, the maximizing  sequence $u_\epsilon$  blows up as $\epsilon\rightarrow 0 $, where $x_{0}$ is called the blow-up point. In the sequel, we will analysis the  blow-up behaviors of  $u_{\epsilon}$.

 First, by an inequality $e^{t}\leq 1+te^{t}$, we have
\begin{equation}
|\Omega|<\int_{\Omega}  e^{\alpha_{\epsilon}|u_{\epsilon}|^{\frac{n}{n-1}}}dx\leq |\Omega|+\alpha_{\epsilon} \lambda_{\epsilon}.\nonumber
\end{equation}
This leads to $\liminf_{\epsilon \rightarrow 0}\lambda_{\epsilon}>0$.

\
\

\noindent\textbf{Case 1.} $x_{0}$ lies in the interior of $\Omega$.
\begin{lm}\label{4-000}
There holds $u_{0}=0$ and $F^{n}(\nabla u_{\epsilon})dx\rightharpoonup \delta_{x_{0}}$ in the sense of measure as $\epsilon\rightarrow 0$, where $\delta_{x_{0}}$ is the dirac measure at $x_{0}$.
\end{lm}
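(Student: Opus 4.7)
The proof is by contradiction and splits into two steps. Both rely on the anisotropic Lions-type inequality (Lemma~\ref{3-002}), the anisotropic Moser--Trudinger inequality \eqref{1-02}, and elliptic regularity for the Finsler $n$-Laplacian.

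\emph{Step 1 ($u_0\equiv 0$).} Suppose $u_0\not\equiv 0$. Lower semicontinuity under weak $W_0^{1,n}$ convergence gives $\|F(\nabla u_0)\|_{L^n}^n \le 1$. I apply Lemma~\ref{3-002} to $u_\epsilon$: for every $p<P:=(1-\|F(\nabla u_0)\|_{L^n}^n)^{-1/(n-1)}$ (read $P=\infty$ if the norm equals $1$), $\int_\Omega\exp(\lambda_n p|u_\epsilon|^{n/(n-1)})\,dx$ is uniformly bounded. Since $\alpha_\epsilon\to\lambda_n(1+\alpha\|u_0\|_{L^n}^n)^{1/(n-1)}$, the crucial numerical check is that
\begin{equation*}
(1+\alpha\|u_0\|_{L^n}^n)(1-\|F(\nabla u_0)\|_{L^n}^n)<1,
\end{equation*}
which, after rearranging, reads $\alpha\|u_0\|_{L^n}^n<\|F(\nabla u_0)\|_{L^n}^n(1+\alpha\|u_0\|_{L^n}^n)$; this follows from the variational characterization $\|F(\nabla u_0)\|_{L^n}^n\ge\lambda_1(\Omega)\|u_0\|_{L^n}^n$ together with $\alpha<\lambda_1(\Omega)$. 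Thus I can choose $p<P$ with $\lambda_n p>\alpha_\epsilon$ for small $\epsilon$, giving an $L^s$ bound ($s>1$) on $\exp(\alpha_\epsilon|u_\epsilon|^{n/(n-1)})$. The right-hand side of \eqref{3-008} is then $L^q$-bounded globally for some $q>1$, and Lemma~\ref{2-04} yields $\|u_\epsilon\|_{L^\infty(\Omega)}=O(1)$, contradicting $M_\epsilon\to\infty$.

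\emph{Step 2 (concentration at $x_0$).} With $u_0=0$, $u_\epsilon\to 0$ in every $L^p(\Omega)$. Up to a subsequence, $F^n(\nabla u_\epsilon)\,dx\rightharpoonup\mu$ for some nonnegative Radon measure $\mu$ on $\overline\Omega$ with $\mu(\overline\Omega)\le 1$. Assume $\mu\neq\delta_{x_0}$, so $\mu(\overline{B_{r_0}(x_0)})\le 1-\delta$ for some $r_0,\delta>0$ with $\overline{B_{r_0}(x_0)}\subset\Omega$. Choose a cutoff $\phi\in C_0^\infty(B_{r_0}(x_0))$ equal to $1$ on $B_{r_0/2}(x_0)$ and set $w_\epsilon:=\phi u_\epsilon$. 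Combining Lemma~\ref{2-01}(i) with the elementary bound $(a+b)^n\le(1+\eta)a^n+C_\eta b^n$,
\begin{equation*}
\int_\Omega F^n(\nabla w_\epsilon)\,dx\le(1+\eta)\int_{B_{r_0}(x_0)}F^n(\nabla u_\epsilon)\,dx+C_{\eta,\phi}\|u_\epsilon\|_{L^n}^n,
\end{equation*}
whose $\limsup$ is $\le(1+\eta)(1-\delta)$; for $\eta$ small, $\|F(\nabla w_\epsilon)\|_{L^n}^n\le\beta<1$ eventually. Applying \eqref{1-02} to $w_\epsilon/\beta^{1/n}$ gives an $L^1$ bound on $\exp(\lambda_n\beta^{-1/(n-1)}|w_\epsilon|^{n/(n-1)})$. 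Since $\alpha_\epsilon\to\lambda_n$ and $\beta^{-1/(n-1)}>1$, this promotes to an $L^s(B_{r_0/2}(x_0))$ bound ($s>1$) on $\exp(\alpha_\epsilon|u_\epsilon|^{n/(n-1)})$, where $w_\epsilon=u_\epsilon$. Interior regularity for \eqref{3-008} (the local analogue of Lemma~\ref{2-04}, via Lieberman's estimates \cite{L3}) then forces $u_\epsilon$ to be uniformly bounded on $B_{r_0/4}(x_0)$, contradicting $x_\epsilon\to x_0$ together with $u_\epsilon(x_\epsilon)\to\infty$.

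\emph{Main obstacle.} Both steps hinge on producing a \emph{strict} inequality in an appropriately scaled gradient $L^n$-norm so that one can promote the Lions or Moser--Trudinger bound from the critical $L^1$ level to a subcritical $L^s$ level with $s>1$; in Step~1 this is exactly where $\alpha<\lambda_1(\Omega)$ enters, and in Step~2 it is what rules out $\mu$ carrying any mass away from $x_0$. A secondary technical nuisance, absent in the Euclidean case $F=|\cdot|$, is the nonlinearity of $F$: the cutoff computation in Step~2 must use the triangle-type bound of Lemma~\ref{2-01}(i) rather than naively distributing a derivative across $\phi u_\epsilon$.
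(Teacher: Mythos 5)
Your proof is correct and follows essentially the same strategy as the paper: in Step~1 the identity $(1+\alpha\|u_0\|_{L^n}^n)(1-\|F(\nabla u_0)\|_{L^n}^n)<1$, forced by $\alpha<\lambda_1(\Omega)$ and the variational characterization of $\lambda_1$, is exactly the inequality the paper chains together before invoking the Lions-type Lemma~\ref{3-002} and the $L^\infty$ estimate Lemma~\ref{2-04}; in Step~2 both arguments pass through a cutoff of $u_\epsilon$ near $x_0$ to bring the gradient energy strictly below $1$ and then apply the anisotropic Moser--Trudinger inequality \eqref{1-02} together with local regularity. The only presentational difference worth noting is in Step~2: you derive $\mu(\overline{B_{r_0}(x_0)})\le 1-\delta$ directly from $\mu(\{x_0\})<1$ via continuity from above and then obtain the energy deficit for $\phi u_\epsilon$ by the clean estimate $F^n(\nabla(\phi u_\epsilon))\le(1+\eta)\phi^nF^n(\nabla u_\epsilon)+C_\eta|u_\epsilon|^nF^n(\nabla\phi)$ combined with weak-$\ast$ upper semicontinuity, whereas the paper phrases the same fact through a somewhat awkwardly quantified contradiction with sequences $r_i,\eta_i\to0$ before also appealing to $u_\epsilon\to0$ in $L^q$ to absorb the $\nabla\phi$ term. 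Your version is tighter and avoids the ambiguity in the paper's negation; otherwise the two proofs coincide.
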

\begin{proof}
Suppose $u_{0}\not\equiv0$, then for any $\alpha\in [0,\lambda_{1}(\Omega))$, we have
\begin{equation*}
1+\alpha||u_{\epsilon}||_{L^{n}(\Omega)}^{n}\rightarrow 1+\alpha||u_{0}||_{L^{n}(\Omega)}^{n}\leq 1+||F(\nabla u_{0})||_{L^{n}(\Omega)}^{n}<\frac{1}{1-||F(\nabla u_{0})||_{L^{n}(\Omega)}^{n}}.
\end{equation*}
Hence $e^{\alpha_{\epsilon}|u_{\epsilon}|^{\frac{n}{n-1}}}$ is bounded in $L^{s}(\Omega)$ for some $s>1$ provided $\epsilon$
is sufficiently small.
Lemma \ref{2-04} implies that $u_{\epsilon}$
is uniformly bounded in ${\Omega}$. It contradicts the assumption that $M_{\epsilon}\rightarrow +\infty$.

Assume that $F^{n}(\nabla u_{\epsilon})dx\rightharpoonup \mu$ in the sense of measure as $\epsilon\rightarrow 0$, if $\mu\neq \delta_{x_{0}}$, we claims that there exists a cut-off function $\phi(x)\in C_{0}^{1}(\Omega)$, which is supported in $\mathcal{W}_{r}(x_{0})$ for some $r>0$ with $0<\phi(x)<1$ in $\mathcal{W}_{r}(x_{0})\backslash\mathcal{W}_{\frac{r}{2}}(x_{0}) $ and $\phi(x)=1$ in $\mathcal{W}_{\frac{r}{2}}(x_{0})$ satisfying
\begin{equation*}
\int_{\mathcal{W}_{r}(x_{0})} \phi F^{n}(\nabla u_{\epsilon})dx\leq 1-\eta
\end{equation*}
for some $\eta>0$ and small enough $\epsilon$. We prove the claim by contradiction. Suppose by the contradiction that there exist sequences of $\eta_{i}\rightarrow 0$ and $r_{i}\rightarrow 0$ as $i\rightarrow +\infty$ such that
\begin{equation*}
\int_{\mathcal{W}_{r_{i}}(x_{0})} \phi_{i} F^{n}(\nabla u_{\epsilon})dx> 1-\eta_{i}.
\end{equation*}
for every $\phi_{i}\in C_{0}^{1}(\mathcal{W}_{r_{i}}(x_{0}) )$ and $\phi_{i}=1$ in $\mathcal{W}_{\frac{r_{i}}{2}}(x_{0})$.
Then
\begin{equation}\label{4-001}
\int_{\mathcal{W}_{\frac{r_{i}}{2}}(x_{0})} \phi_{i} F^{n}(\nabla u_{\epsilon})dx> 1-\eta_{i}.
\end{equation}
Taking $i\rightarrow\infty$, the left hand side of (\ref{4-001})converges to $0$. However, $1-\eta_{i}\rightarrow 1$, this contradiction leads to the claim. Since $u_{\epsilon}\rightarrow 0$ strongly in $L^{q}(\Omega)$ for any $q>1$, we may assume that
\begin{equation*}
\int_{\mathcal{W}_{r}(x_{0})}  F^{n}(\nabla (\phi u_{\epsilon}))dx\leq 1-\eta
\end{equation*}
provided $\epsilon$ is sufficient small. By (\ref{1-02}), $e^{\lambda_{n}{(\phi u_{\epsilon})}^{\frac{n}{n-1}}}$ is uniformly bounded in $L^{s}(\mathcal{W}_{r_{0}}(x_{0}))$ for some $s>1$ and $0<r_{0}<r$. Applying Lemma \ref{2-04}, $u_{\epsilon}$ is uniformly bounded in $\mathcal{W}_{\frac {r_{0}}2}(x_{0})$, which contradicts the fact that $M_{\epsilon}\rightarrow +\infty$ again. Therefore, $F^{n}(\nabla u_{\epsilon})dx\rightharpoonup \delta_{x_{0}}$ as $\epsilon\rightarrow 0$.
\end{proof}

Now we set
\begin{equation}\label{4-7}
r_{\varepsilon}^{n}=\lambda_{\varepsilon}\beta_{\epsilon}^{-1}M_{\varepsilon}^{-\frac{n}{n-1}}e^{-\alpha_{\epsilon}M_{\epsilon}^{\frac{n}{n-1}}}.
\end{equation}
Fixed any $\delta\in (0,\frac{\lambda_{n}}{2})$, by the expression of $r_{\epsilon}$ in (\ref{4-7}) and $\lambda_{\epsilon}$ in
(\ref{3-008}), we have
\begin{eqnarray*}
r_{\epsilon}^{n}exp\{\delta M_{\epsilon}^{\frac{n}{n-1}}\}&=&\lambda_{\epsilon}\beta_{\epsilon}^{-1}M_{\epsilon}^{-\frac{n}{n-1}}exp\{-\alpha_{\epsilon} M_{\epsilon}^{\frac{n}{n-1}}\}exp\{\delta M_{\epsilon}^{\frac{n}{n-1}}\}\\
&=&\beta_{\epsilon}^{-1}M_{\epsilon}^{-\frac{n}{n-1}}exp\{(\delta-\alpha_{\epsilon}) M_{\epsilon}^{\frac{n}{n-1}}\}\int_{\Omega}u_{\epsilon}^{\frac{n}{n-1}}exp\{\alpha_{\epsilon} u_{\epsilon}^{\frac{n}{n-1}}\}dx\\
&\leq& \beta_{\epsilon}^{-1}M_{\epsilon}^{-\frac{n}{n-1}}exp\{(2\delta-\alpha_{\epsilon}) M_{\epsilon}^{\frac{n}{n-1}}\}\int_{\Omega}u_{\epsilon}^{\frac{n}{n-1}}exp\{(\alpha_{\epsilon}-\delta) u_{\epsilon}^{\frac{n}{n-1}}\}dx\\
&\leq& C \beta_{\epsilon}^{-1}M_{\epsilon}^{-\frac{n}{n-1}}exp\{(2\delta-\alpha_{\epsilon}) M_{\epsilon}^{\frac{n}{n-1}}\}\\
&\rightarrow& 0
\end{eqnarray*}
as $\epsilon\rightarrow 0$. From above, we can easily get the fact that $\beta_{\epsilon}\rightarrow 1$, $\alpha_{\epsilon}\rightarrow \lambda_{n}$, $r_{\epsilon}\rightarrow 0$, $\gamma_{\epsilon}\rightarrow \alpha$ as $\epsilon\rightarrow 0$.

 Define the rescaling functions
\begin{eqnarray}\label{4-6}
v_{\epsilon}(x)&=&\frac{u_{\epsilon}(x_{\epsilon}+r_{\epsilon}x)}{M_{\epsilon}},\nonumber\\
w_{\epsilon}(x)&=&M_{\epsilon}^{\frac{1}{n-1}}(u_{\epsilon}(x_{\epsilon}+r_{\epsilon}x)-M_{\epsilon}),
\end{eqnarray}
where $v_{\epsilon}(x)$ and $w_{\epsilon}(x)$ are defined on $\Omega_{\epsilon}=\{x\in\mathbb{R}^{n}:~~x_{\epsilon}+r_{\epsilon}x\in \Omega\}$.

By a direct calculation we obtain that
\begin{equation*}
-div(F^{n-1}(\nabla v_{\epsilon})F_{\xi}(\nabla v_{\epsilon}))=\frac{v_{\epsilon}^{\frac{1}{n-1}}}{M_{\epsilon}^{n}}
e^{\alpha_{\epsilon}(u_{\epsilon}^{\frac{n}{n-1}}(x_{\epsilon}+r_{\epsilon}x)-M_{\epsilon}^{\frac{n}{n-1}})}
+r_{\epsilon}^{n}\gamma_{\epsilon}v_{\epsilon}^{n-1}\qquad \text{ in }\Omega_{\epsilon}  .
\end{equation*}

Since  $0\leq  v_{\epsilon}\leq 1$, $\frac{v_{\epsilon}^{\frac{1}{n-1}}}{M_{\epsilon}^{n}}
e^{\alpha_{\epsilon}(u_{\epsilon}^{\frac{n}{n-1}}(x_{\epsilon}+r_{\epsilon}x)-M_{\epsilon}^{\frac{n}{n-1}})}
+r_{\epsilon}^{n}\gamma_{\epsilon}v_{\epsilon}^{n-1}\rightarrow 0$  in  $B_{r}(0)$  for any   $r>0$, and  $\frac{v_{\epsilon}^{\frac{1}{n-1}}}{M_{\epsilon}^{n}}
e^{\alpha_{\epsilon}(u_{\epsilon}^{\frac{n}{n-1}}(x_{\epsilon}+r_{\epsilon}x)-M_{\epsilon}^{\frac{n}{n-1}})}
+r_{\epsilon}^{n}\gamma_{\epsilon}v_{\epsilon}^{n-1}$  is uniformly bounded in $ L^{\infty}(\overline{B_{r}(0)})$,   by Theorem 1 in \cite{T2}, $v_{\epsilon}$ is uniformly bounded in $C^{1,\alpha}(\overline{B_{\frac{r}{2}}(0)})$. By Ascoli-Arzela's theorem, we can find a subsequence $\epsilon_{j}\rightarrow 0$  such that  $v_{\epsilon_{j}} \rightarrow v$ in $C_{loc}^{1}(\mathbb{R}^{n})$,   where $v\in C^{1}(\mathbb{R}^{n})$  and satisfies
$$-div(F^{n-1}(\nabla v)F_{\xi}(\nabla v))=0\qquad \text{ in } \mathbb{R}^{n} .$$
Furthermore, we have  $0\leq v\leq 1$  and  $v(0)=1$.  The Liouville theorem (see \cite{HKM}) leads to $v\equiv 1$.

Also we have in $\Omega_{\epsilon}$
\begin{equation}\label{4-8}
-div(F^{n-1}(\nabla w_{\epsilon})F_{\xi}(\nabla w_{\epsilon}))=v_{\epsilon}^{\frac{1}{n-1}}e^{\alpha_{\epsilon}(u_{\epsilon}^{\frac{n}{n-1}}(x_{\epsilon}+r_{\epsilon}x)-M_{\epsilon}^{\frac{n}{n-1}})}~~~~ +r_{\epsilon}^{n}M_{\epsilon}\gamma_{\epsilon}u_{\epsilon}^{n-1}
.
\end{equation}
For any  $r>0$,  since  $0\leq u_{\epsilon}(x_{\epsilon}+r_{\epsilon}x)\leq M_{\epsilon}$, we have $-div(F^{n-1}(\nabla w_{\epsilon})F_{\xi}(\nabla w_{\epsilon}))=O(1)$  in  $B_{r}(0)$  for small $\epsilon$.  Then from Theorem 1 in \cite{T2} and Ascoli-Arzela's theorem, there exists $w\in C^{1}(\mathbb{R}^{n})$ such that $w_{\epsilon}$ converges to $w$ in $C_{loc}^{1}(\mathbb{R}^{n})$.  Therefore we have
\begin{eqnarray}
|u_{\epsilon}|^{\frac{n}{n-1}}(x_{\epsilon}+r_{\epsilon}x)-M_{\epsilon}^{\frac{n}{n-1}}&=&M_{\epsilon}^{\frac{n}{n-1}}(v_{\epsilon}^{\frac{n}{n-1}}(x)-1)\nonumber\\
&=&\frac{n}{n-1}w_{\epsilon}(x)(1+O((v_{\epsilon}(x)-1)^{2})).
\end{eqnarray}
By taking   $\epsilon\rightarrow 0$,  we know that  $w$  satisfies
\begin{equation}\label{4-9}
 -div(F^{n-1}(\nabla w)F_{\xi}(\nabla w))=e^{\frac{n}{n-1}\lambda_{n} w}
\end{equation}
in the distributional sense. We also have the facts   $w(0)=0= \max_{x\in \R^{n}}w(x)$.   Moreover, for any $R>0$, we have

\begin{eqnarray}\label{4-10}
1&\geq&\lim_{\epsilon\rightarrow 0}\int_{\mathcal{W}_{r_{\epsilon} R}(x_{\epsilon})}\frac{u_{\epsilon}^{\frac{n}{n-1}}}{\lambda_{\epsilon}}e^{\alpha_{\epsilon}|u_{\epsilon}|^{\frac{n}{n-1}}}dx
\nonumber\\
&=&\lim_{\epsilon\rightarrow 0}\int_{\mathcal{W}_{ R}(0)}v_\epsilon^{\frac{n}{n-1}}e^{\alpha_{\epsilon}(u_\epsilon^{\frac{n}{n-1}}(x_{\epsilon}+r_{\epsilon}x)-M_{\epsilon}^{\frac{n}{n-1}})}dx\nonumber\\
&=&\int_{\mathcal{W}_{R}(0)}e^{\frac{n}{n-1}\lambda_{n} w}dx.
\end{eqnarray}
Taking  $ R\rightarrow+\infty$,  we have  $\int_{\R^{n}}e^{\frac{n}{n-1}\lambda_{n} w}dx\leq 1$.

On the other hand, we claim
\begin{equation}\label{4-11}
\int_{\R^{n}}e^{\frac{n}{n-1}\lambda_{n} w}dx\geq 1.
\end{equation}
Actually we can prove it by level-set-method. For  $t\in \R$, let $\Omega_{t}=\{x\in\Omega|w(x)>t\}$ and $\mu(t)=|\Omega_{t}|$.  By the divergence theorem,
\begin{eqnarray}
\int_{\Omega_{t}}-div(F^{n-1}(\nabla w)F_{\xi}(\nabla w))dx&=&\int_{\partial\Omega_{t}}F^{n-1}(\nabla w)<F_{\xi}(\nabla w),\frac{\nabla w}{|\nabla w|}>ds\nonumber\\
&=&\int_{\partial\Omega_{t}}\frac{F^{n}(\nabla w)}{|\nabla w|}ds.\nonumber
\end{eqnarray}
By using the isoperimetric inequality (\ref{2-03}) and the co-area formula (\ref{2-02}), it follows from H\"{o}lder inequality that
\begin{eqnarray}\label{4-12}
n\kappa_{n}^{\frac{1}{n}}\mu(t)^{1-\frac{1}{n}} &\leq & P_F(\Omega_t)
 = \int_{\partial\Omega_{t}}\frac{F(\nabla w)}{|\nabla w|}ds \nonumber \\
&\leq &(\int_{\partial\Omega_{t}}\frac{F^{n}(\nabla w)}{|\nabla w|}ds)^{\frac{1}{n}}(\int_{\partial\Omega_{t}}\frac{1}{|\nabla w|}ds)^{1-\frac{1}{n}}\nonumber\\
&=&(\int_{\Omega_{t}}e^{\frac{n}{n-1}\lambda_{n} w}dx)^{\frac{1}{n}}(-\mu^{\prime}(t))^{1-\frac{1}{n}}.
\end{eqnarray}
Hence
\begin{eqnarray}
\int_{\R^{n}}e^{\frac{n}{n-1}\lambda_{n} w}dx&=&\frac{n}{n-1}\lambda_{n}\int_{-\infty}^{\max w}e^{\frac{n}{n-1}\lambda_{n} t}\mu(t)dt\nonumber\\
&\leq& \frac{n}{n-1}\lambda_{n}\int_{-\infty}^{\max w}e^{\frac{n}{n-1}\lambda_{n}t}\frac{-\mu^{\prime}(t)}{(n\kappa_{n}^{\frac{1}{n}})^{\frac{n}{n-1}}}(\int_{\Omega_{t}}e^{\frac{n}{n-1}\lambda_{n} w}dx)^{\frac{1}{n-1}}dt\nonumber\\
&=&\int_{-\infty}^{\max w}\frac{d}{dt}(\int_{\Omega_{t}}e^{\frac{n}{n-1}\lambda_{n} w}dx)^{\frac{n}{n-1}}dt=(\int_{\R^{n}}e^{\frac{n}{n-1}\lambda_{n} w}dx)^{\frac{n}{n-1}},\nonumber
\end{eqnarray}
which implies the claim.

Thus we get that $\int_{\R^{n}}e^{\frac{n}{n-1}\lambda_{n} w}dx=1$, which implies that the equality holds in the above iso-perimetric inequality. Therefore $\Omega_{t}$  must be a wulff ball. In other words, $w$ is radial symmetric with respect to  $F^o(x)$.  We can immediately get
\begin{equation}\label{4-13}
w(r)=-\frac{n-1}{\lambda_{n}}\log(1+\kappa_{n}^{\frac{1}{n-1}} r^{\frac{n}{n-1}})
\end{equation} where $r=F^{o}(x)$.
\begin{lm} \label{4-14}
 For any $L>1$, we set $u_{\epsilon,L}= min\{u_{\epsilon},\frac{M_{\epsilon}}{L}\}$. Then we have
\begin{equation}
   \limsup_{\epsilon\rightarrow 0}\int_{\Omega}F^{n}(\nabla u_{\epsilon,L})dx\leq\frac{1}{L}.\nonumber
\end{equation}
\end{lm}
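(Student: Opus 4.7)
The plan is to pair the Euler--Lagrange equation (\ref{3-008}) against the test function $u_{\epsilon,L}\in W^{1,n}_0(\Omega)$ itself. Because $\nabla u_{\epsilon,L}=\nabla u_\epsilon\,\chi_{\{u_\epsilon<M_\epsilon/L\}}$ almost everywhere and part (iii) of Lemma \ref{2-01} gives $F^{n-1}(\nabla u_\epsilon)\langle F_\xi(\nabla u_\epsilon),\nabla u_\epsilon\rangle=F^n(\nabla u_\epsilon)$, the integration-by-parts left-hand side collapses to $\int_\Omega F^n(\nabla u_{\epsilon,L})\,dx$, yielding
$$\int_\Omega F^n(\nabla u_{\epsilon,L})\,dx=\frac{\beta_\epsilon}{\lambda_\epsilon}\int_\Omega u_\epsilon^{\frac{1}{n-1}}u_{\epsilon,L}\,e^{\alpha_\epsilon u_\epsilon^{\frac{n}{n-1}}}\,dx+\gamma_\epsilon\int_\Omega u_\epsilon^{n-1}u_{\epsilon,L}\,dx.$$
The lower-order term is harmless: $u_{\epsilon,L}\le u_\epsilon$ gives the majorant $\gamma_\epsilon\|u_\epsilon\|_{L^n}^n$, and Lemma \ref{4-000} combined with the Rellich--Kondrachov embedding forces $\|u_\epsilon\|_{L^n}\to 0$.

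The exponential term I split across the concentration ball $\mathcal{W}_{Rr_\epsilon}(x_\epsilon)$. Outside it I use the crude bound $u_\epsilon^{1/(n-1)}u_{\epsilon,L}\le u_\epsilon^{n/(n-1)}$, so this contribution equals
$$\beta_\epsilon-\frac{\beta_\epsilon}{\lambda_\epsilon}\int_{\mathcal{W}_{Rr_\epsilon}(x_\epsilon)}u_\epsilon^{\frac{n}{n-1}}\,e^{\alpha_\epsilon u_\epsilon^{\frac{n}{n-1}}}\,dx,$$
which by (\ref{4-10}) converges to $\beta_\epsilon(1-A_R)$, where $A_R:=\int_{\mathcal{W}_R(0)}e^{\frac{n}{n-1}\lambda_n w}\,dx$; note that $A_R\to 1$ as $R\to\infty$ by the level-set computation leading to (\ref{4-13}). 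Inside the ball, the pointwise bounds $u_{\epsilon,L}\le M_\epsilon/L$ and $u_\epsilon\le M_\epsilon$ give
$$\frac{\beta_\epsilon}{\lambda_\epsilon}\int_{\mathcal{W}_{Rr_\epsilon}(x_\epsilon)}u_\epsilon^{\frac{1}{n-1}}u_{\epsilon,L}\,e^{\alpha_\epsilon u_\epsilon^{\frac{n}{n-1}}}\,dx\le\frac{\beta_\epsilon}{L}\cdot\frac{M_\epsilon^{n/(n-1)}}{\lambda_\epsilon}\int_{\mathcal{W}_{Rr_\epsilon}(x_\epsilon)}e^{\alpha_\epsilon u_\epsilon^{\frac{n}{n-1}}}\,dx,$$
and the bubble convergence $v_\epsilon\to 1$ in $C^1_{\mathrm{loc}}(\R^n)$ supplies $M_\epsilon^{n/(n-1)}=u_\epsilon^{n/(n-1)}(1+o_\epsilon(1))$ uniformly on $\mathcal{W}_{Rr_\epsilon}(x_\epsilon)$, so applying (\ref{4-10}) again sends this upper bound to $\beta_\epsilon A_R/L$ as $\epsilon\to 0$.

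Putting the three pieces together and using $\beta_\epsilon\to 1$, one obtains $\limsup_{\epsilon\to 0}\int_\Omega F^n(\nabla u_{\epsilon,L})\,dx\le(1-A_R)+A_R/L$ for every $R>0$, and letting $R\to\infty$ collapses the right-hand side to $1/L$. The main obstacle is precisely this interchange of the $\epsilon\to 0$ and $R\to\infty$ limits on the concentration ball: one must convert the weight $u_\epsilon^{n/(n-1)}$ that makes up $\lambda_\epsilon$ into the constant $M_\epsilon^{n/(n-1)}$, a step that draws on both the $C^1_{\mathrm{loc}}$ bubble profile and the normalization $\int_{\R^n}e^{\frac{n}{n-1}\lambda_n w}\,dx=1$ established just before the statement.
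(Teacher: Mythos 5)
Your proof is correct, and it is genuinely different in structure from the paper's, though the two are dual to each other. The paper pairs equation \eqref{3-008} against the \emph{high} truncation $(u_\epsilon - M_\epsilon/L)^+$, obtaining $\int_\Omega F^n(\nabla(u_\epsilon - M_\epsilon/L)^+)\,dx$ on the left. It then bounds the right-hand side \emph{from below} by discarding everything outside the concentration ball, sending $\epsilon\to 0$ and then $R\to\infty$ to reach $\liminf_\epsilon\int_\Omega F^n(\nabla(u_\epsilon-M_\epsilon/L)^+)\,dx\ge 1-1/L$, and finally invokes the disjoint-support decomposition $\int_\Omega F^n(\nabla u_\epsilon)\,dx=\int_\Omega F^n(\nabla u_{\epsilon,L})\,dx+\int_\Omega F^n(\nabla(u_\epsilon-M_\epsilon/L)^+)\,dx$ together with $\int_\Omega F^n(\nabla u_\epsilon)\,dx=1$. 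You test with the \emph{low} truncation $u_{\epsilon,L}$ and estimate the right-hand side \emph{from above}. The practical consequence is that the paper's argument only needs the bubble contribution (it throws the exterior away, being a one-sided bound), whereas you must also control the exterior piece; you do so by the clean observation that $u_\epsilon^{1/(n-1)}u_{\epsilon,L}\le u_\epsilon^{n/(n-1)}$ together with the global normalization $\lambda_\epsilon=\int_\Omega u_\epsilon^{n/(n-1)}e^{\alpha_\epsilon u_\epsilon^{n/(n-1)}}\,dx$ from \eqref{3-008}, which folds the exterior into $\beta_\epsilon-\beta_\epsilon A_R/\ldots$. Inside the ball you pay the price of converting the weight $u_\epsilon^{n/(n-1)}$ to $M_\epsilon^{n/(n-1)}$ via the uniform convergence $v_\epsilon\to 1$ in $C^1_{\mathrm{loc}}$, which the paper's version avoids because the factor $(1-1/L)$ emerges directly from $(v_\epsilon-1/L)^+\to 1-1/L$. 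Both routes rely on the same asymptotic input \eqref{4-10}, \eqref{4-13} and both are correct; the paper's is marginally more economical, while yours avoids having to invoke the orthogonal energy splitting as a separate step.
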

\begin{proof}
We chose $(u_{\epsilon}-\frac{M_{\epsilon}}{L})^{+}$ as a test function of (\ref{3-008}) to get
\begin{eqnarray}\label{4-15}
& & -\int_{\Omega}(u_{\epsilon}-\frac{M_{\epsilon}}{L})^{+}div(F^{n-1}(\nabla u_{\epsilon})F_{\xi}(\nabla u_{\epsilon}))dx\nonumber \\
&= & \int_{\Omega}(u_{\epsilon}-\frac{M_{\epsilon}}{L})^{+}[\frac{\beta_{\epsilon}u_{\epsilon}^{\frac{1}{n-1}}}{\lambda_{\epsilon}}e^{\alpha_{\epsilon}|u_{\epsilon}|^{\frac{n}{n-1}}}+\gamma_{\epsilon}u_{\epsilon}^{n-1}]dx.
\end{eqnarray}

For any $R>0$, the estimation of the right hand side of (\ref{4-15}) is
\begin{eqnarray}\label{4-16}
& & \int_{\Omega}(u_{\epsilon}-\frac{M_{\epsilon}}{L})^{+}[\frac{\beta_{\epsilon}u_{\epsilon}^{\frac{1}{n-1}}}{\lambda_{\epsilon}}e^{\alpha_{\epsilon}|u_{\epsilon}|^{\frac{n}{n-1}}}+
\gamma_{\epsilon}u_{\epsilon}^{n-1}]dx\nonumber\\
&\geq&\int_{\mathcal{W}_{r_{\epsilon} R}(x_{\epsilon})}(u_{\epsilon}-\frac{M_{\epsilon}}{L})^{+}\frac{\beta_{\epsilon}u_{\epsilon}^{\frac{1}{n-1}}}{\lambda_{\epsilon}}e^{\alpha_{\epsilon}|u_{\epsilon}|^{\frac{n}{n-1}}}dx+o_{\epsilon}(1)\nonumber\\
&=&\int_{\mathcal{W}_{ R}(0)}(u_{\epsilon}(x_{\epsilon}+r_{\epsilon}x)-\frac{M_{\epsilon}}{L})^{+}\frac{r_{\epsilon}^{n}\beta_{\epsilon}u_{\epsilon}^{\frac{1}{n-1}}(x_{\epsilon}+r_{\epsilon}x)}{\lambda_{\epsilon}}e^{\alpha_{\epsilon}|u_{\epsilon}|^{\frac{n}{n-1}}(x_{\epsilon}+r_{\epsilon}x)}dx+o_{\epsilon}(1)\nonumber\\
&=&\int_{\mathcal{W}_{ R}(0)}(v_{\epsilon}-\frac{1}{L})^{+}\beta_{\epsilon}v_{\epsilon}^{\frac{1}{n-1}}e^{\alpha_{\epsilon}(|u|^{\frac{n}{n-1}}(x_{\epsilon}+r_{\epsilon}x)-M_{\epsilon}^{\frac{n}{n-1}})}dx+o_{\epsilon}(1)\nonumber\\
&\rightarrow&\int_{\mathcal{W}_{R}(0)}(1-\frac{1}{L})e^{\frac{n}{n-1}\lambda_{n} w}dx.
\end{eqnarray}
In virtue of the  divergence theorem and Lemma \ref{2-01}, the estimation of the left hand side of (\ref{4-15}) is
\begin{eqnarray}\label{4-17}
&-&\int_{\Omega}(u_{\epsilon}-\frac{M_{\epsilon}}{L})^{+}div(F^{n-1}(\nabla u_{\epsilon})F_{\xi}(\nabla u_{\epsilon}))dx\nonumber\\
&=&-\int_{\Omega}(u_{\epsilon}-\frac{M_{\epsilon}}{L})^{+}div(F^{n-1}(\nabla (u_{\epsilon}-\frac{M_{\epsilon}}{L})^{+})F_{\xi}(\nabla (u_{\epsilon}-\frac{M_{\epsilon}}{L})^{+}))dx\nonumber\\
&=&\int_{\Omega}F^{n}(\nabla (u_{\epsilon}-\frac{M_{\epsilon}}{L})^{+})dx.
\end{eqnarray}
Putting (\ref{4-15}) (\ref{4-16})(\ref{4-17}) together, and taking $R\rightarrow \infty $,  we obtain
\begin{equation}
\int_{\Omega}F^{n}(\nabla (u_{\epsilon}-\frac{M_{\epsilon}}{L})^{+})dx\geq1-\frac{1}{L}.\nonumber
\end{equation}
Noticing that
\begin{equation}
\int_{\Omega}F^{n}(\nabla u_{\epsilon})dx=\int_{\Omega}F^{n}(\nabla u_{\epsilon,L})dx+\int_{\Omega}F^{n}(\nabla (u_{\epsilon}-\frac{M_{\epsilon}}{L})^{+})dx.\nonumber
\end{equation}
Thus the conclusion can be obtained  due to the fact   $\int_{\Omega}F^{n}(\nabla u_{\epsilon})dx=1$.
\end{proof}

\begin{rem} From Lemma \ref{4-14}, applying $L^{\frac{n-1}{n}}u_{\epsilon,L}$ to inequality (\ref{1-02}), we get
\begin{equation}\label{4-18}
\int_{\Omega}e^{\lambda_{n}L|u_{\epsilon,L}|^{\frac{n}{n-1}}}dx\leq C<+\infty.
\end{equation}
\end{rem}

\
\

 For any   $L>1$,
 since  $e^{\alpha_{\epsilon}\frac{L+1}{2}|u_{\epsilon,L}|^{\frac{n}{n-1}}}$  is uniformly bounded in  $L^{1}(\Omega)$,  then
by (\ref{1-02}) $e^{\alpha_{\epsilon} |u_{\epsilon,L}|^{\frac{n}{n-1}}}$  is uniformly bounded in  $L^{q}(\Omega)$  for some  $q>1$.  Due to  $u_{\epsilon,L}$  converges to  $0$  almost everywhere in  $\Omega$, it implies that  $e^{\alpha_{\epsilon} |u_{\epsilon,L}|^{\frac{n}{n-1}}}$  converges to $1$ in $L^{1}(\Omega)$.  Thus we have
\begin{eqnarray}
\lim_{\epsilon\rightarrow 0}\int_{\{Lu_{\epsilon}\leq M_{\epsilon}\}}e^{\alpha_{\epsilon}|u_{\epsilon}|^{\frac{n}{n-1}}}dx= \lim_{\epsilon\rightarrow 0}\int_{\Omega}e^{\alpha_{\epsilon}|u_{\epsilon,L}|^{\frac{n}{n-1}}}dx=|\Omega|. \nonumber
\end{eqnarray}
Hence
\begin{eqnarray}
& &\lim_{\epsilon\rightarrow 0}\int_{\Omega}e^{\alpha_{\epsilon}|u_{\epsilon}|^{\frac{n}{n-1}}}dx\nonumber\\
&=&\lim_{\epsilon\rightarrow 0}\int_{\{Lu_{\epsilon}\leq M_{\epsilon}\}}e^{\alpha_{\epsilon} |u_{\epsilon}|^{\frac{n}{n-1}}}dx+\lim_{\epsilon\rightarrow 0}\int_{\{Lu_{\epsilon}> M_{\epsilon}\}}e^{\alpha_{\epsilon} |u_{\epsilon}|^{\frac{n}{n-1}}}dx\nonumber\\
&\leq& |\Omega|+\lim_{\epsilon\rightarrow 0}\frac{\lambda_{\epsilon}L^{\frac{n}{n-1}}}{M_{\epsilon}^{\frac{n}{n-1}}}\int_{\{Lu_{\epsilon}> M_{\epsilon}\}}\frac{u_{\epsilon}^{\frac{n}{n-1}}}{\lambda_{\epsilon}}e^{\alpha_{\epsilon} |u_{\epsilon}|^{\frac{n}{n-1}}}dx\nonumber\\
&\leq & |\Omega|+\lim_{\epsilon\rightarrow 0}\frac{\lambda_{\epsilon}L^{\frac{n}{n-1}}}{M_{\epsilon}^{\frac{n}{n-1}}}.\nonumber
\end{eqnarray}
Taking $L\rightarrow 1$, we get
\begin{equation}\label{4-19}
\lim_{\epsilon\rightarrow 0}\int_{\Omega}e^{\alpha_{\epsilon} |u_{\epsilon}|^{\frac{n}{n-1}}}dx\leq|\Omega|+\limsup_{\epsilon\rightarrow 0}\frac{\lambda_{\epsilon}}{M_{\epsilon}^{\frac{n}{n-1}}}.
\end{equation}
Noticing that  $\lim_{\epsilon\rightarrow 0}\int_{\Omega}e^{\alpha_{\epsilon}|u_{\epsilon}|^{\frac{n}{n-1}}}dx>|\Omega|$,  we have
 \begin{equation}\label{4-20}
 \lim_{\epsilon\rightarrow 0}\frac{M_{\epsilon}}{\lambda_{\epsilon}}=0.
 \end{equation}

\
\

The following Lemma will be used in Section $5$.
\begin{lm}\label{4-21}
\begin{equation*}
\lim_{\epsilon\rightarrow 0}\int_{\Omega}e^{\alpha_{\epsilon}|u_{\epsilon}|^{\frac{n}{n-1}}}dx= |\Omega|+\lim_{R\rightarrow +\infty}\limsup_{\epsilon\rightarrow 0}\int_{\mathcal{W}_{Rr_{\epsilon}}(x_{\epsilon})}e^{\alpha_{\epsilon} |u_{\epsilon}|^{\frac{n}{n-1}}}dx.
\end{equation*}
\end{lm}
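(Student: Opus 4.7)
The plan is to decompose the integral over $\Omega$ as
$$\int_{\Omega}e^{\alpha_{\epsilon}|u_{\epsilon}|^{\frac{n}{n-1}}}dx=\int_{\mathcal{W}_{Rr_{\epsilon}}(x_{\epsilon})}e^{\alpha_{\epsilon}|u_{\epsilon}|^{\frac{n}{n-1}}}dx+\int_{\Omega\setminus\mathcal{W}_{Rr_{\epsilon}}(x_{\epsilon})}e^{\alpha_{\epsilon}|u_{\epsilon}|^{\frac{n}{n-1}}}dx,$$
and to show that the ``outer'' piece over $\Omega\setminus\mathcal{W}_{Rr_{\epsilon}}(x_{\epsilon})$ converges to $|\Omega|$ as $\epsilon\to 0$ and then $R\to+\infty$. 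Rearranging and passing to the limits in this order will yield the lemma.

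For the \emph{lower} bound on the outer piece I would invoke Lemma~\ref{4-000}: since $u_{0}\equiv 0$, we have $u_{\epsilon}\to 0$ almost everywhere, hence $e^{\alpha_{\epsilon}|u_{\epsilon}|^{\frac{n}{n-1}}}\to 1$ a.e. For any small $\rho>0$ the inclusion $\mathcal{W}_{Rr_{\epsilon}}(x_{\epsilon})\subset\mathcal{W}_{\rho}(x_{0})$ holds once $\epsilon$ is small, so Fatou's lemma on $\Omega\setminus\mathcal{W}_{\rho}(x_{0})$ followed by $\rho\to 0$ yields
$$\liminf_{\epsilon\to 0}\int_{\Omega\setminus\mathcal{W}_{Rr_{\epsilon}}(x_{\epsilon})}e^{\alpha_{\epsilon}|u_{\epsilon}|^{\frac{n}{n-1}}}dx\geq|\Omega|.$$
For the \emph{upper} bound I would mirror the argument that leads to (\ref{4-19}): fix $L>1$ and split the outer region according to $\{u_{\epsilon}\leq M_{\epsilon}/L\}$ and $\{u_{\epsilon}>M_{\epsilon}/L\}$. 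On the first subregion the integrand equals $e^{\alpha_{\epsilon}u_{\epsilon,L}^{n/(n-1)}}$, which by Lemma~\ref{4-14} plugged into the anisotropic Moser-Trudinger inequality (\ref{1-02}) is uniformly bounded in $L^{q}(\Omega)$ for some $q>1$; combined with $u_{\epsilon,L}\to 0$ a.e., this gives $L^{1}$-convergence to $1$, so the contribution is $\leq|\Omega|+o_{\epsilon}(1)$. On the second subregion the inequality $1\leq(Lu_{\epsilon}/M_{\epsilon})^{n/(n-1)}$ gives
$$\int_{(\Omega\setminus\mathcal{W}_{Rr_{\epsilon}}(x_{\epsilon}))\cap\{u_{\epsilon}>M_{\epsilon}/L\}}e^{\alpha_{\epsilon}|u_{\epsilon}|^{\frac{n}{n-1}}}dx\leq\frac{L^{n/(n-1)}\lambda_{\epsilon}}{M_{\epsilon}^{n/(n-1)}}\Big(1-\int_{\mathcal{W}_{Rr_{\epsilon}}(x_{\epsilon})}\frac{u_{\epsilon}^{n/(n-1)}}{\lambda_{\epsilon}}e^{\alpha_{\epsilon}|u_{\epsilon}|^{\frac{n}{n-1}}}dx\Big),$$
and by (\ref{4-10}) the parenthesized factor tends to $1-\int_{\R^{n}}e^{\frac{n}{n-1}\lambda_{n}w}dx=0$ when $\epsilon\to 0$ and then $R\to+\infty$. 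Sending $L\to 1$ completes the upper bound.

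The main obstacle is controlling the prefactor $\lambda_{\epsilon}/M_{\epsilon}^{n/(n-1)}$ in the last estimate, which is not a priori bounded. I would resolve this by a dichotomy: either this ratio stays bounded along the subsequence, in which case the estimate above closes immediately; or it diverges, in which case the rescaling (\ref{4-6})-(\ref{4-7}) together with the $C^{1}_{\mathrm{loc}}$-convergence $v_{\epsilon}\to 1$ and $w_{\epsilon}\to w$ already forces
$$\int_{\mathcal{W}_{Rr_{\epsilon}}(x_{\epsilon})}e^{\alpha_{\epsilon}|u_{\epsilon}|^{\frac{n}{n-1}}}dx\sim\frac{\lambda_{\epsilon}}{\beta_{\epsilon}M_{\epsilon}^{n/(n-1)}}\int_{\mathcal{W}_{R}(0)}e^{\frac{n}{n-1}\lambda_{n}w}dx\to+\infty$$
for any fixed large $R$, so both sides of the claimed identity are $+\infty$ and the lemma holds trivially.
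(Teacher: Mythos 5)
Your proof is correct and rests on the same core ingredients as the paper: the split of $\Omega$ according to $\{Lu_{\epsilon}\leq M_{\epsilon}\}$ versus $\{Lu_{\epsilon}>M_{\epsilon}\}$, Fatou's lemma via $u_{\epsilon}\to 0$ a.e., the rescaling limit (\ref{4-10}) together with $\int_{\R^{n}}e^{\frac{n}{n-1}\lambda_{n}w}dx=1$, and Lemma~\ref{3-10} for the existence of the limit on the left. The difference is one of organization. The paper's proof is a one-line combination of the pre-established (\ref{4-19}), (\ref{4-22}) and (\ref{4-23}): (\ref{4-22}) bounds the bubble limit from above by $\lim_{\epsilon}\int_{\Omega}e^{\alpha_{\epsilon}|u_{\epsilon}|^{n/(n-1)}}dx-|\Omega|$, while (\ref{4-23}) identifies the bubble limit with $\limsup_{\epsilon}\lambda_{\epsilon}/M_{\epsilon}^{n/(n-1)}$, which by (\ref{4-19}) dominates $\lim_{\epsilon}\int_{\Omega}\cdots-|\Omega|$; the squeeze then closes automatically even when $\limsup_{\epsilon}\lambda_{\epsilon}/M_{\epsilon}^{n/(n-1)}=+\infty$, so the paper needs no explicit dichotomy. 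You instead show directly that the outer integral converges to $|\Omega|$, which is a valid reorganization, and the dichotomy you add to handle a possibly unbounded prefactor is exactly the right patch (when that ratio diverges, the rescaling estimate forces both sides to $+\infty$). One small simplification: sending $L\to 1$ is unnecessary in your version. In the derivation of (\ref{4-19}) the parenthesized factor is only bounded by $1$, so $L\to1$ is needed to sharpen $L^{n/(n-1)}\to1$; but here, after intersecting with $\Omega\setminus\mathcal{W}_{Rr_{\epsilon}}(x_{\epsilon})$, the parenthesized factor is driven to $0$ by (\ref{4-10}) (after $\epsilon\to 0$ then $R\to\infty$), so any fixed $L>1$ already annihilates the second subregion's contribution.
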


\begin{proof}
On one hand,
\begin{eqnarray}\label{4-22}
& &\limsup_{\epsilon\rightarrow 0}\int_{\mathcal{W}_{Rr_{\epsilon}}(x_{\epsilon})}e^{\alpha_{\epsilon} |u_{\epsilon}|^{\frac{n}{n-1}}}dx\nonumber\\
&\leq &\limsup_{\epsilon\rightarrow 0}\int_{\Omega}e^{\alpha_{\epsilon} |u_{\epsilon}|^{\frac{n}{n-1}}}dx-\liminf_{\epsilon\rightarrow 0}\int_{\Omega\backslash \mathcal{W}_{Rr_{\epsilon}}(x_{\epsilon})}e^{\alpha_{\epsilon} |u_{\epsilon}|^{\frac{n}{n-1}}}dx\nonumber\\
&\leq &\limsup_{\epsilon\rightarrow 0}\int_{\Omega}e^{\alpha_{\epsilon} |u_{\epsilon}|^{\frac{n}{n-1}}}dx-|\Omega|.
\end{eqnarray}
On the other hand,
\begin{equation*}
\int_{\mathcal{W}_{Rr_{\epsilon}}(x_{\epsilon})}e^{\alpha_{\epsilon} |u_{\epsilon}|^{\frac{n}{n-1}}}dx=\frac{\lambda_{\epsilon}}
{M_{\epsilon}^{\frac{n}{n-1}}}(\int_{\mathcal{W}_{R}(0)}e^{\frac{n}{n-1}\lambda_{n}w}dx+o_{\epsilon}(1)),
\end{equation*}
which gives
\begin{equation}\label{4-23}
\lim_{R\rightarrow +\infty}\limsup_{\epsilon\rightarrow 0}\int_{\mathcal{W}_{Rr_{\epsilon}}(x_{\epsilon})}e^{\alpha_{\epsilon} |u_{\epsilon}|^{\frac{n}{n-1}}}dx=\limsup_{\epsilon\rightarrow 0}\frac{\lambda_{\epsilon}}{M_{\epsilon}^{\frac{n}{n-1}}}.
\end{equation}
Combining  (\ref{4-19}), (\ref{4-22}),  (\ref{4-23}) and  Lemma \ref{3-10},  we get the result.
\end{proof}

Now we  claim that
\begin{equation}\label{4-24}
\lim_{\epsilon\rightarrow 0}\int_{\Omega}\frac{M_{\epsilon}}{\lambda_{\epsilon}}\beta_{\epsilon}u_{\epsilon}^{\frac{1}{n-1}}e^{\alpha_{\epsilon} |u_{\epsilon}|^{\frac{n}{n-1}}}dx=1.
\end{equation}
To this purpose, we denote $\varphi_{\epsilon}=\frac{M_{\epsilon}}{\lambda_{\epsilon}}\beta_{\epsilon}u_{\epsilon}^{\frac{1}{n-1}}e^{\alpha_{\epsilon} |u_{\epsilon}|^{\frac{n}{n-1}}}$. Clearly
\begin{equation*}
\int_{\Omega}\varphi_{\epsilon} dx=\int_{\{Lu_{\epsilon}< M_{\epsilon}\}}\varphi_{\epsilon} dx+\int_{\{Lu_{\epsilon}\geq M_{\epsilon}\}\backslash \mathcal{W}_{r_{\epsilon}R}(x_{\epsilon})}\varphi_{\epsilon} dx+\int_{\mathcal{W}_{r_{\epsilon} R}(x_{\epsilon})}\varphi_{\epsilon} dx.
\end{equation*}
We estimate the three integrals on the right hand side respectively. By (\ref{4-20}) and Lemma \ref{4-000} we have
\begin{eqnarray}\label{4-25}
0 \leq \int_{\{Lu_{\epsilon}< M_{\epsilon}\}}\varphi_{\epsilon} dx&=&\frac{M_{\epsilon}\beta_{\epsilon}}{\lambda_{\epsilon}}\int_{\{Lu_{\epsilon}< M_{\epsilon}\}}u_{\epsilon}^{\frac{1}{n-1}}e^{\alpha_{\epsilon} |u_{\epsilon}|^{\frac{n}{n-1}}}dx\nonumber\\
&\leq&\frac{M_{\epsilon}\beta_{\epsilon}}{\lambda_{\epsilon}}\int_{\Omega}u_{\epsilon,L}^{\frac{1}{n-1}}e^{\alpha_{\epsilon} |u_{\epsilon,L}|^{\frac{n}{n-1}}}dx\nonumber\\
&=&o_{\epsilon}(1)O(\frac 1L).
\end{eqnarray}
 Moreover for any $R>0$, we have
\begin{eqnarray}\label{4-26}
\int_{\{Lu_{\epsilon}\geq M_{\epsilon}\}\backslash \mathcal{W}_{r_{\epsilon} R}(x_{\epsilon})}\varphi_{\epsilon}dx &\leq& \int_{\{Lu_{\epsilon}\geq M_{\epsilon}\}\backslash \mathcal{W}_{r_{\epsilon} R}(x_{\epsilon})}\frac{L\beta_{\epsilon}}{\lambda_{\epsilon}}u_{\epsilon}^{\frac{n}{n-1}}e^{\alpha_{\epsilon} |u_{\epsilon}|^{\frac{n}{n-1}}}dx\nonumber\\
&\leq& \int_{\Omega\backslash \mathcal{W}_{r_{\epsilon} R}(x_{\epsilon})}\frac{L\beta_{\epsilon}}{\lambda_{\epsilon}}u_{\epsilon}^{\frac{n}{n-1}}e^{\alpha_{\epsilon} |u_{\epsilon}|^{\frac{n}{n-1}}}dx\nonumber\\
&\rightarrow&L(1-\int_{\mathcal{W}_{ R}(0)}e^{\frac{n}{n-1}\lambda_{n} w}dx),
\end{eqnarray}
and
\begin{eqnarray}\label{4-27}
\int_{\mathcal{W}_{r_{\epsilon} R}(x_\epsilon)}\varphi_{\epsilon} dx&=&\int_{\mathcal{W}_{ R}(0)}\beta_{\epsilon}v_{\epsilon}^{\frac{1}{n-1}}e^{\alpha_{\epsilon} (|u_{\epsilon}|^{\frac{n}{n-1}}-M_{\epsilon}^{\frac{n}{n-1}})}dx\nonumber\\
&\rightarrow&\int_{\mathcal{W}_{ R}(0)}e^{\frac{n}{n-1}\lambda_{n} w}dx.
\end{eqnarray}
Putting (\ref{4-25}) (\ref{4-26}) (\ref{4-27}) together and taking  $\epsilon\rightarrow 0$  first, then letting $R\rightarrow   \infty$, we conclude (\ref{4-24}).

In the similar way, we also can obtain that
\begin{equation}\label{4-244}
\lim_{\epsilon\rightarrow 0}\int_{\Omega}\frac{\beta_{\epsilon}M_{\epsilon}}{\lambda_{\epsilon}}u_{\epsilon}^{\frac{1}{n-1}}e^{\alpha_{\epsilon} |u_{\epsilon}|^{\frac{n}{n-1}}}\phi(x)dx=\phi(x_{0})
\end{equation}for any $\phi(x)\in C_c^0(\Omega)$.

The following phenomenon was first discovered by Brezis and Merle \cite{BM}, developed later by Struwe \cite{S1}. We deduce the new version involving n-Finsler-Laplacian.
\begin{lm}\label{4-49}
Let $\{f_{\epsilon}\}$ be a uniformly bounded sequence of functions in $L^{1}(\Omega)$, and $\{\psi_{\epsilon}\}\subset C^{1}(\overline \Omega)\cap W_{0}^{1,n}(\Omega)$ satisfy
\begin{equation}\label{4-28}
-div(F^{n-1}(\nabla \psi_{\epsilon})F_{\xi}(\nabla  \psi_{\epsilon}))=f_{\epsilon}+\alpha\psi_{\epsilon}|\psi_{\epsilon}|^{n-2} \qquad in ~~~~\Omega.
\end{equation}
where $0\leq \alpha<\lambda_{1}(\Omega)$ is a constant. Then for any $1<q<n$, we have $||\nabla \psi_{\epsilon}||_{L^{q}(\Omega)}\leq C$ for some constant $C$  depending only on $q,n,\Omega$ and the upper bound of $||f_{\epsilon}||_{L^{1}(\Omega)}$.
\end{lm}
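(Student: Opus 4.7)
The plan is to adapt the Boccardo-Gallou\"et truncation technique to the Finsler setting, using the first-eigenvalue inequality to absorb the subcritical zeroth-order term $\alpha|\psi_\epsilon|^{n-2}\psi_\epsilon$ made available by the strict inequality $\alpha<\lambda_{1}(\Omega)$.

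First I would establish a truncation energy estimate. For $k>0$, use the admissible test function $T_k(\psi_\epsilon)=\mathrm{sgn}(\psi_\epsilon)\min(|\psi_\epsilon|,k)$ in the weak formulation of (\ref{4-28}). Since $\nabla T_k(\psi_\epsilon)$ vanishes on $\{|\psi_\epsilon|\geq k\}$, the principal part yields $\int_\Omega F^n(\nabla T_k\psi_\epsilon)\,dx$. Splitting the zeroth-order contribution $\alpha\int_\Omega|\psi_\epsilon|^{n-1}|T_k\psi_\epsilon|\,dx$ on $\{|\psi_\epsilon|\leq k\}$ and $\{|\psi_\epsilon|>k\}$ produces a sublevel piece equal to $\alpha\int_\Omega|T_k\psi_\epsilon|^n\,dx-\alpha k^n|\{|\psi_\epsilon|>k\}|$ plus a superlevel remainder $\alpha k\int_{\{|\psi_\epsilon|>k\}}|\psi_\epsilon|^{n-1}\,dx$. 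Absorbing the sublevel term via $\lambda_{1}(\Omega)\int|T_k\psi_\epsilon|^n\leq\int F^n(\nabla T_k\psi_\epsilon)\,dx$ and discarding the negative term yields the truncation energy estimate
\begin{equation*}
\Bigl(1-\frac{\alpha}{\lambda_{1}(\Omega)}\Bigr)\int_\Omega F^n(\nabla T_k\psi_\epsilon)\,dx\leq k\|f_\epsilon\|_{L^{1}(\Omega)}+\alpha k\int_{\{|\psi_\epsilon|>k\}}|\psi_\epsilon|^{n-1}\,dx,
\end{equation*}
whose positive coercivity constant $1-\alpha/\lambda_{1}(\Omega)>0$ is the crux of the argument.

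Next I would pair this estimate with the parallel bound from the test function $G_k(\psi_\epsilon)=(|\psi_\epsilon|-k)^+\mathrm{sgn}(\psi_\epsilon)$ to bootstrap a uniform $L^p(\Omega)$ bound on $\psi_\epsilon$ for every finite $p$. On $\{|\psi_\epsilon|>k\}$ one has $|\psi_\epsilon|=|G_k\psi_\epsilon|+k$ so that $|\psi_\epsilon|^{n-1}\leq 2^{n-2}(|G_k\psi_\epsilon|^{n-1}+k^{n-1})$, and Sobolev embedding for $G_k\psi_\epsilon\in W_{0}^{1,n}(\Omega)$ converts the superlevel integral into control of $\int F^n(\nabla G_k\psi_\epsilon)\,dx$, which in turn feeds back into the companion estimate. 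Iterating dyadically on $k$ closes the bootstrap and yields $\|\psi_\epsilon\|_{L^p(\Omega)}\leq C(p,n,\Omega,\alpha,\|f_\epsilon\|_{L^{1}(\Omega)})$ for every $p<\infty$.

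With this $L^p$ bound in hand, $g_\epsilon:=f_\epsilon+\alpha|\psi_\epsilon|^{n-2}\psi_\epsilon$ is uniformly bounded in $L^{1}(\Omega)$. Consequently $\psi_\epsilon$ solves $-Q_n\psi_\epsilon=g_\epsilon$ with $L^{1}$ data, and I invoke the classical Boccardo-Gallou\"et estimate for the $n$-Finsler-Laplacian, which follows by applying the truncation energy estimate with $\alpha=0$ to analyse the distribution function of $|\nabla\psi_\epsilon|$ in the Marcinkiewicz space $M^n(\Omega)$. This gives $\|\nabla\psi_\epsilon\|_{L^q(\Omega)}\leq C(q,n,\Omega)\|g_\epsilon\|_{L^{1}(\Omega)}^{1/(n-1)}$ for every $1<q<n$, proving the claim.

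The main obstacle is the second step -- closing the $L^p$ bootstrap on $\psi_\epsilon$ when the right-hand side of the truncation energy estimate already involves $\psi_\epsilon$ itself. The hypothesis $\alpha<\lambda_{1}(\Omega)$ is indispensable here: without the positive coercivity constant $1-\alpha/\lambda_{1}(\Omega)$, the zeroth-order term cannot be absorbed and the truncation energy fails to be linearly bounded in $k$, so the entire Boccardo-Gallou\"et scheme breaks down.
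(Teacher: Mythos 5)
Your Step 1 (the $T_k$ truncation inequality) is correct and is essentially the same elementary observation the paper uses when it tests the $\alpha=0$ equation with $\psi^t_\epsilon=\min\{\psi_\epsilon^+,t\}$. Your Step 3 (the $\alpha=0$ base case) is also sound; where you invoke the Boccardo--Gallou\"et Marcinkiewicz estimate $\nabla\psi_\epsilon\in M^n(\Omega)$, the paper instead obtains exponential decay of the level sets $|\{\psi_\epsilon>t\}|$ by convex symmetrization and an explicit capacity minimizer, and then tests with $\log\frac{1+2\psi_\epsilon^+}{1+\psi_\epsilon^+}$ and uses Young's inequality. These are different presentations of the same $W_0^{1,q}$ bound and either is acceptable.

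The genuine gap is in Step 2. Your $T_k$ estimate
\begin{equation*}
\Bigl(1-\frac{\alpha}{\lambda_1(\Omega)}\Bigr)\int_\Omega F^n(\nabla T_k\psi_\epsilon)\,dx\leq k\|f_\epsilon\|_{L^1(\Omega)}+\alpha k\int_{\{|\psi_\epsilon|>k\}}|\psi_\epsilon|^{n-1}\,dx
\end{equation*}
has on its right-hand side precisely the quantity $\int_{\{|\psi_\epsilon|>k\}}|\psi_\epsilon|^{n-1}\,dx$ that the whole scheme is trying to bound, so any path through it is a priori circular unless one already controls $\|\psi_\epsilon\|_{L^{n-1}(\Omega)}$ uniformly in $\epsilon$. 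The $G_k$-test does not supply a starting point: its zeroth-order contribution is $\alpha\int_{\{|\psi_\epsilon|>k\}}(|G_k\psi_\epsilon|+k)^{n-1}|G_k\psi_\epsilon|\,dx$, and after absorbing the leading $|G_k\psi_\epsilon|^n$ piece by the first-eigenvalue inequality (which already requires $\alpha(1+\eta)<\lambda_1(\Omega)$ after the crude $2^{n-2}$ splitting, shrinking your coercivity constant further) one is still left with a term $\sim k^n|\{|\psi_\epsilon|>k\}|$ \emph{and} the pairing $\int_\Omega f_\epsilon\,G_k\psi_\epsilon\,dx$, which cannot be bounded since $G_k\psi_\epsilon$ is unbounded and $f_\epsilon$ is only $L^1$. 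The exponent $n-1$ in the zeroth-order term is exactly at the natural-growth threshold of the $n$-Laplacian, and with only $L^1$ data the truncation method does not by itself produce a uniform base case for the dyadic iteration; the geometric decay of $\int_{\{|\psi_\epsilon|>2^j\}}|\psi_\epsilon|^{n-1}\,dx$ you appeal to is what you need to prove, not an input.

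The paper breaks this circularity with a contradiction-and-rescaling argument exploiting the $(n-1)$-homogeneity of $Q_n$: if $\|\psi_{\epsilon_j}\|_{L^{n-1}(\Omega)}\to\infty$, set $w_{\epsilon_j}=\psi_{\epsilon_j}/\|\psi_{\epsilon_j}\|_{L^{n-1}(\Omega)}$, so that $-Q_nw_{\epsilon_j}=\hat f_{\epsilon_j}+\alpha w_{\epsilon_j}|w_{\epsilon_j}|^{n-2}$ with $\hat f_{\epsilon_j}=f_{\epsilon_j}/\|\psi_{\epsilon_j}\|_{L^{n-1}}^{n-1}\to0$ in $L^1(\Omega)$ while the zeroth-order term now has unit $L^1$-norm. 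The $\alpha=0$ case then gives a uniform $W_0^{1,q}$ bound on $w_{\epsilon_j}$, a weak limit $w$ solves the pure eigenvalue equation $-Q_nw=\alpha w|w|^{n-2}$, and $\alpha<\lambda_1(\Omega)$ forces $w\equiv 0$, contradicting $\|w\|_{L^{n-1}(\Omega)}=1$. This normalization is the device that replaces your missing Step 2; without it (or an equivalent compactness argument) your proof is incomplete.
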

\begin{proof}
When $\alpha=0$. We use an argument of M. Struwe to prove that $||\nabla \psi_{\epsilon}||_{L^{q}(\Omega)}\leq C||f_{\epsilon}||_{L^{1}(\Omega)}$ for some constant $C$  depending only on $q,n,\Omega$. Without loss of generality, we assume $||f_{\epsilon}||_{L^{1}(\Omega)}=1$. For $t\geq 1$, denote $\psi_{\epsilon}^{t}=\min\{\psi_{\epsilon}^{+},t \}$, where $\psi_{\epsilon}^{+}$ is a positive part of $\psi_{\epsilon}$. Testing Eq.(\ref{4-28}) with $\psi_{\epsilon}^{t}$, we have $\int_{\Omega}F^{n}(\nabla \psi_{\epsilon}^{t})dx\leq\int_{\Omega} |f_{\epsilon}|\psi_{\epsilon}^{t}\leq t$. Assume $|\Omega|=|\mathcal{W}_{d}|$, where $\mathcal{W}_{d}=\{x\in \mathbb{R}^{n}: F^{0}(x)\leq d \}$.

Let $\psi_{\epsilon}^{\star}$ be the nonincreasing  rearrangement of $\psi_{\epsilon}^{t}$, and $|\mathcal{W}_{\rho}|=|\{x\in \mathcal{W}_{d}: \psi_{\epsilon}^{\star}\geq t\}|$. It is known that $||F(\nabla \psi_{\epsilon}^{\star})||_{L^{n}(\mathcal{W}_{d})}\leq ||F(\nabla \psi_{\epsilon}^{t})||_{L^{n}(\Omega)}$, and we have
\begin{equation}\label{4-29}
\inf_{\phi\in W_{0}^{1,n}(\mathcal{W}_{d}),\phi|_{\mathcal{W}_{\rho}}=t}\int_{\mathcal{W}_{d}}F^{n}(\nabla \phi)dx\leq \int_{\mathcal{W}_{d}}F^{n}(\nabla \psi_{\epsilon}^{\star})dx\leq t.
\end{equation}
The above infimum can be attained by
\begin{equation*}
\phi_{1}(x)=
\left\{
\begin{array}{lr}
t\log\frac{d}{F^{0}(x)}/\log\frac{d}{\rho}\qquad &in~~~\mathcal{W}_{d}\backslash \mathcal{W}_{\rho},\\
t  \qquad &in~~\mathcal{W}_{\rho}.
\end{array}
\right.
\end{equation*}
Calculating $||F(\nabla \phi_{1})||_{L^{n}(\mathcal{W}_{d})}^{n}$, we have by (\ref{4-29}), $\rho\leq de^{-C_{1}t}$ for some constant $C_{1}>0$. Hence
\begin{equation*}
|\{x\in\Omega: \psi_{\epsilon}\geq t\}|=|\mathcal{W}_{\rho}|\leq \kappa_{n}d^{n}e^{-nC_{1}t}.
\end{equation*}
For any $0<\delta<nC_{1}$,
\begin{eqnarray*}
\int_{\Omega}e^{\delta\psi_{\epsilon}^{+}}dx&\leq& e^{\delta}|\Omega|+\sum_{m=1}^{\infty}e^{(m+1)\delta}|\{x\in\Omega:~~m\leq \psi_{\epsilon}\leq m+1\}|\\
&\leq&e^{\delta}|\Omega|+\kappa_{n}d^{n}e^{\delta}\sum_{m=1}^{\infty}e^{-(nC_{1}-\delta)m}\leq C_{2}
\end{eqnarray*}
for some constant $C_{2}$. Testing Eq.(\ref{4-28}) with $\log\frac{1+2\psi_{\epsilon}^{+}}{1+\psi_{\epsilon}^{+}}$, we have
\begin{equation*}
\int_{\Omega}\frac{F^{n}(\nabla\psi_{\epsilon}^{+})}{(1+\psi_{\epsilon}^{+})(1+2\psi_{\epsilon}^{+})}dx\leq \log 2.
\end{equation*}
By the Young inequality, we have for any $1<q<n$,
\begin{eqnarray*}
\int_{\Omega}F^{q}(\nabla \psi_{\epsilon}^{+})dx&\leq &\int_{\Omega}\frac{F^{n}(\nabla\psi_{\epsilon}^{+})}{(1+\psi_{\epsilon}^{+})(1+2\psi_{\epsilon}^{+})}dx
+\int_{\Omega}((1+\psi_{\epsilon}^{+})(1+2\psi_{\epsilon}^{+}))^{\frac{q}{n-q}}dx\\
&\leq& C_{3}(1+\int_{\Omega}e^{\delta\psi_{\epsilon}^{+}}dx)\leq C_{4},
\end{eqnarray*}
for some constants $C_{3}$ and $C_{4}$ depending only on $q,~~n$ and $\Omega$. Let $\psi_{\epsilon}^{-}$ be the negative part of $\psi_{\epsilon}$. Similarly, we have $\int_{\Omega}F^{q}(\nabla \psi_{\epsilon}^{-})dx\leq C_{5}$ for some constant $C_{5}$ depending only on $q,~~n$ and $\Omega$. Then by Lemma \ref{2-01}, the lemma holds.

When $\alpha\in (0,\lambda_{1}(\Omega))$. Suppose $\psi_{\epsilon}$ is unbounded in $L^{n-1}(\Omega)$. Then their exist a subsequence $\{\epsilon_{j}\}$ such that $||\psi_{\epsilon_{j}}||_{L^{n-1}(\Omega)}\rightarrow +\infty$ as $j\rightarrow +\infty$. Let $w_{\epsilon_{j}}=\psi_{\epsilon_{j}}/||\psi_{\epsilon_{j}}||_{L^{n-1}(\Omega)}$. Then we have $||w_{\epsilon_{j}}||_{L^{n-1}(\Omega)}=1$, and $-Q_{n}(w_{\epsilon_{j}})$ is bounded in $L^{1}(\Omega)$. Hence $w_{\epsilon_{j}}$ is bounded in $W_{0}^{1,q}(\Omega)$ for any $0<q<n$. Assume $w_{\epsilon_{j}}$ converges to $w$ weakly in $W_{0}^{1,q}(\Omega)$ and strongly in $L^{n-1}(\Omega)$.It can be easily derived that $w$ is a weak solution of $-Q_{n}u=\alpha w|w|^{n-2}$ in $\Omega$. Since $0<\alpha<\lambda_{1}(\Omega)$, $w$ must be zero. On the other hand, $||w_{\epsilon_{j}}||_{L^{n-1}(\Omega)}=1$ leads to $||w||_{L^{n-1}(\Omega)}=1$, contradiction. Therefore $\psi_{\epsilon}$ must be bounded in $L^{n-1}(\Omega)$. It implies $f_{\epsilon}+\alpha\psi_{\epsilon}|\psi_{\epsilon}|^{n-2}\in L^{1}(\Omega) $.
Then, for any $1<q<n$, there exist a constant $C$ depending only on $q,n,\Omega$, and the upper bounded of $||f_{\epsilon}||_{L^{1}(\Omega)}$ such that $||\nabla\psi_{\epsilon}||_{L^{q}(\Omega)}\leq C$. Thus the proof is finished.
\end{proof}

The following lemma reveals how $u_{\epsilon}$ converges away from $x_{0}$.
\begin{lm}\label{4-30}
$M_{\epsilon}^{\frac{1}{n-1}}u_{\epsilon} \rightharpoonup G_{\alpha}$ weakly in $W_{0}^{1,q}(\Omega)$ for any $1<q<n$, where $G_{\alpha}$ is a Green function satisfying
\begin{equation}\label{4-31}
\left\{
\begin{array}{llc}
-div(F^{n-1}(\nabla G_{\alpha})F_{\xi}(\nabla G_{\alpha}))=\delta_{x_{0}}+\alpha G_{\alpha}^{n-1}\qquad & \text { in }~~~\Omega,\\
G_{\alpha}=0            \qquad    & \text { on }~~~\partial\Omega.
\end{array}
\right.
\end{equation}
Furthermore,
$M_{\epsilon}^{\frac{1}{n-1}}u_{\epsilon} \rightarrow G_{\alpha}$ in $C^{1}(\overline{\Omega'})$ for any domain $\Omega'\subset\subset\overline\Omega\backslash\{x_{0}\}$.
\end{lm}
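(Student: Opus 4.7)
The plan is to rescale $u_\epsilon$ by $M_\epsilon^{1/(n-1)}$, derive a PDE for the rescaled sequence with an $L^1$-bounded right-hand side, apply Lemma~\ref{4-49} to obtain a uniform $W^{1,q}$-bound, pass to the limit in the nonlinear operator by a monotonicity argument, and upgrade to $C^1$-convergence away from $x_0$ via local quasilinear regularity. Setting $\tilde u_\epsilon := M_\epsilon^{1/(n-1)} u_\epsilon$ and exploiting the $(n-1)$-homogeneity of $Q_n$, the Euler--Lagrange equation (\ref{3-008}) becomes
\begin{equation*}
-Q_n \tilde u_\epsilon = f_\epsilon + \gamma_\epsilon\, \tilde u_\epsilon^{n-1}, \qquad f_\epsilon := \frac{M_\epsilon \beta_\epsilon}{\lambda_\epsilon}\, u_\epsilon^{1/(n-1)}\, e^{\alpha_\epsilon u_\epsilon^{n/(n-1)}}.
\end{equation*}
Identity (\ref{4-24}) gives $\|f_\epsilon\|_{L^1(\Omega)}\to 1$, while $\gamma_\epsilon\to\alpha<\lambda_1(\Omega)$ since $\|u_\epsilon\|_{L^n}\to 0$; Lemma~\ref{4-49}, whose proof works verbatim with the fixed $\alpha$ replaced by a sequence $\gamma_\epsilon$ bounded away from $\lambda_1(\Omega)$, then yields $\|\nabla\tilde u_\epsilon\|_{L^q(\Omega)} \le C(q)$ for every $1<q<n$. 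Along a subsequence, $\tilde u_\epsilon \rightharpoonup G_\alpha$ weakly in each $W_0^{1,q}(\Omega)$ and strongly in every $L^p(\Omega)$ by Rellich--Kondrachov, so $\tilde u_\epsilon^{n-1}\to G_\alpha^{n-1}$ in $L^1(\Omega)$, and (\ref{4-244}) gives $f_\epsilon\to\delta_{x_0}$ in $\mathcal{D}'(\Omega)$.

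To identify $G_\alpha$ as the distributional solution of (\ref{4-31}) one must pass to the limit in the nonlinear operator $Q_n$; this is the \emph{main obstacle}, since weak $W^{1,q}$-convergence alone is not enough. The standard remedy is to exploit the strict monotonicity of the vector field $p\mapsto F^{n-1}(p)F_\xi(p)$, which is guaranteed by the positive definiteness of $\operatorname{Hess}(F^n)$, via a Boccardo--Murat type argument: testing the difference of the equation for $\tilde u_\epsilon$ and an approximation of $G_\alpha$ with truncations $T_k(\tilde u_\epsilon-G_\alpha)$ and applying Fatou produces the a.e.\ convergence $\nabla\tilde u_\epsilon\to\nabla G_\alpha$. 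Combined with the equi-integrability from the $W^{1,q}$-bound, this yields strong $L^q_{\mathrm{loc}}$-convergence of gradients, which identifies the distributional limit of $-Q_n\tilde u_\epsilon$ as $-Q_n G_\alpha$ and produces (\ref{4-31}) with $G_\alpha|_{\partial\Omega}=0$ preserved by weak $W_0^{1,q}$-convergence. Uniqueness of $G_\alpha$ follows from the strict monotonicity of $Q_n$ together with the coercivity of $-Q_n-\alpha(\cdot)^{n-1}$ on $W_0^{1,n}(\Omega)$ when $\alpha<\lambda_1(\Omega)$, so no subsequence extraction is needed.

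For the $C^1(\overline{\Omega'})$-convergence on $\Omega'\subset\subset\overline\Omega\setminus\{x_0\}$, I would first upgrade the decay of $u_\epsilon$ away from $x_0$. Fixing a neighbourhood $\Omega''$ of $\Omega'$ still disjoint from $\{x_0\}$, Lemma~\ref{4-000} gives $\int_{\Omega''} F^n(\nabla u_\epsilon)\,dx\to 0$; inserting a cutoff and applying (\ref{1-02}) together with Lemma~\ref{3-002} shows that $e^{\alpha_\epsilon u_\epsilon^{n/(n-1)}}$ is uniformly bounded in $L^s(\Omega'')$ for some $s>1$, so the right-hand side of (\ref{3-008}) is bounded in $L^s_{\mathrm{loc}}$ on $\Omega''$. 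Lieberman-type boundary quasilinear regularity (valid up to the smooth $\partial\Omega$) then bounds $u_\epsilon$ uniformly in $C^{1,\beta}(\overline{\Omega'})$, whence $u_\epsilon\to 0$ in $C^1(\overline{\Omega'})$. Using (\ref{4-20}), the first term of the right-hand side in the equation for $\tilde u_\epsilon$ is $o(1)$ in $L^\infty(\overline{\Omega'})$, while $\gamma_\epsilon\tilde u_\epsilon^{n-1}$ is uniformly bounded in $L^\infty(\overline{\Omega'})$ via Sobolev embedding of the $W^{1,q}$-bound; a further application of the same regularity produces a uniform $C^{1,\beta}(\overline{\Omega'})$-bound on $\tilde u_\epsilon$, and Arzel\`a--Ascoli then upgrades the weak convergence to $\tilde u_\epsilon\to G_\alpha$ in $C^1(\overline{\Omega'})$, completing the lemma.
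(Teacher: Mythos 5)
Your proposal follows the same high-level strategy as the paper — rescale $u_\epsilon$ by $M_\epsilon^{1/(n-1)}$, bound the right-hand side of the resulting Euler--Lagrange equation in $L^1$ via (\ref{4-24}), invoke Lemma~\ref{4-49} for a uniform $W_0^{1,q}$-bound, pass to the weak limit, and upgrade to $C^1$ away from $x_0$ via a cut-off argument, Lemma~\ref{2-04}, and Tolksdorf/Lieberman regularity. But you are substantially more careful at two spots the paper treats rather casually. First, the paper tests (\ref{4-32}) against $\phi\in C_0^\infty(\Omega)$ and immediately writes the limit equation $\int\nabla\phi\cdot F^{n-1}(\nabla G_\alpha)F_\xi(\nabla G_\alpha)\,dx=\phi(x_0)+\alpha\int\phi\,G_\alpha^{n-1}\,dx$, which implicitly requires a.e.\ (or strong $L^q_{\mathrm{loc}}$) convergence of $\nabla\tilde u_\epsilon$; weak $W^{1,q}$-convergence is not enough because $Q_n$ is nonlinear. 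Your Boccardo--Murat truncation argument exploiting the strict monotonicity of $p\mapsto F^{n-1}(p)F_\xi(p)$ supplies exactly the missing justification. Second, you observe that Lemma~\ref{4-49} is stated for a fixed $\alpha$ whereas the equation carries the sequence $\gamma_\epsilon$; your remark that the proof goes through as long as $\gamma_\epsilon$ stays bounded away from $\lambda_1(\Omega)$ (which it does, since $\gamma_\epsilon\to\alpha<\lambda_1$) correctly patches this. You also add a uniqueness claim for $G_\alpha$, which the paper does not discuss.

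One small inaccuracy in your $C^1$-step: the $W_0^{1,q}$-bound for $q<n$ only gives, via Sobolev embedding, a uniform bound on $\tilde u_\epsilon$ in $L^{p}$ for any finite $p$, not in $L^\infty(\overline{\Omega'})$. That is nonetheless adequate for your purposes: a uniform $L^p$-bound on $\gamma_\epsilon\tilde u_\epsilon^{n-1}$ with $p$ large, together with the $L^{s_0}$-bound on the exponential term, places the right-hand side of the equation for $\tilde u_\epsilon$ in $L^{s}(\Omega'')$ for some $s>1$, and then Lemma~\ref{2-04} (the paper's route) or a bootstrapping of Lieberman's up-to-the-boundary estimates gives the uniform $C^{1,\beta}(\overline{\Omega'})$-bound. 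So just replace ``$L^\infty$ via Sobolev embedding'' by ``$L^p$ for all finite $p$, then Lemma~\ref{2-04}'' and the argument closes cleanly.
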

\begin{proof}
By Eq.(\ref{3-008}), we have

\begin{equation}\label{4-32}
-Q_{n}(M_{\epsilon}^{\frac{1}{n-1}}u_{\epsilon})=\frac{M_{\epsilon}\beta_{\epsilon}u_{\epsilon}^{\frac{1}{n-1}}}{\lambda_{\epsilon}}
e^{\alpha_{\epsilon}|u_{\epsilon}|^{\frac{n}{n-1}}}+\gamma_{\epsilon}M_{\epsilon}u_{\epsilon}^{n-1}.
\end{equation}
Due to (\ref{4-24}) and Lemma \ref{4-49}, we obtain  that $M_{\epsilon}^{\frac{1}{n-1}}u_{\epsilon}$ is uniformly bounded in $W_{0}^{1,q}(\Omega)$ for any $1<q<n$. Assume $M_{\epsilon}^{\frac{1}{n-1}}u_{\epsilon} \rightharpoonup G_{\alpha}$ weakly in $W_{0}^{1,q}(\Omega)$. Testing Eq.(\ref{4-32}) with $\phi\in C_{0}^{\infty}(\Omega)$, we have by (\ref{4-244})
\begin{eqnarray*}
-\int_{\Omega}\phi Q_{n}(M_{\epsilon}^{\frac{1}{n-1}}u_{\epsilon})dx&=&\int_{\Omega}\phi \frac{M_{\epsilon}\beta_{\epsilon}u_{\epsilon}^{\frac{1}{n-1}}}{\lambda_{\epsilon}}e^{\alpha_{\epsilon}|u_{\epsilon}|^{\frac{n}{n-1}}}dx
+\gamma_{\epsilon}\int_{\Omega}\phi M_{\epsilon}u_{\epsilon}^{n-1}dx \\
&\rightarrow&\phi(x_{0})+\alpha\int_{\Omega}\phi G_{\alpha}^{n-1}dx.
\end{eqnarray*}
Hence
\begin{equation*}
\int_{\Omega}\nabla \phi F^{n-1}(\nabla G_{\alpha})F_{\xi}(\nabla G_{\alpha})dx=\phi(x_{0}) +\alpha G_{\alpha}^{n-1},
\end{equation*}
in the sense of measure and whence (\ref{4-31}) holds.

 For any fixed small $\delta$, we choose a cut-off function $\xi(x)\in C_{0}^{\infty}(\Omega\backslash \mathcal{W}_{\delta}(x_{0}))$ such that $\xi(x)=1$ on $\Omega\backslash \mathcal{W}_{3\delta}(x_{0})$. By Lemma \ref{4-000} we get $\int_{\Omega}F^{n}(\nabla(\xi u_{\epsilon}))dx\rightarrow 0$ as $\epsilon\rightarrow 0$. Then $e^{(\xi u_{\epsilon})^{\frac{n}{n-1}}}$ is bounded in $L^{s}(\Omega\backslash \mathcal{W}_{\delta}(x_{0}))$ for any $s>1$. In particular, $e^{u_{\epsilon}^{\frac{n}{n-1}}}$ is bounded in $L^{s}(\Omega\backslash \mathcal{W}_{3\delta}(x_{0}))$. Since $M_{\epsilon}^{\frac{1}{n-1}}u_{\epsilon}$ is bounded in $L^{q}(\Omega)$ for any $q>1$, H$\ddot{o}$lder inequality implies that $\frac{M_{\epsilon}u_{\epsilon}^{\frac{1}{n-1}}}{\lambda_{\epsilon}}e^{\alpha_{\epsilon}|u_{\epsilon}|^{\frac{n}{n-1}}}$ is uniformly bounded in $L^{s_{0}}(\Omega\backslash \mathcal{W}_{3\delta}(x_{0}))$. From the proof of the Lemma \ref{4-49} and $0\leq\gamma_{\epsilon}<\lambda_{1}(\Omega)$, we have $\gamma_{\epsilon}M_{\epsilon}u_{\epsilon}^{n-1}\in L^{1}(\Omega\backslash \mathcal{W}_{3\delta}(x_{0}))$. Then by Lemma \ref{2-04}, we have $$||M_{\epsilon}^{\frac{1}{n-1}}u_{\epsilon}||_{L^{\infty}(\Omega\backslash \mathcal{W}_{3\delta}(x_{0}))}<C.$$

 Theorem 1 in \cite{T2} and Ascoli-Arzela's theorem,  we have $M_{\epsilon}^{\frac{1}{n-1}}u_{\epsilon}$ converges to $G_{\alpha}$ in $C_{loc}^{1}(\Omega\backslash \mathcal{W}_{4\delta}(x_{0}))$.
\end{proof}

\begin{lm}\label{4-66}
 Asymptotic representation of Green function $G_{\alpha}$ is
\begin{equation}\label{4-39}
G_{\alpha}=-\frac{1}{(n\kappa_{n})^{\frac{1}{n-1}}}\log F^{o}(x-x_{0})+C_{G}+\psi(x)
\end{equation}
where $C_{G}$ is a constant, $\psi(x_{0})=0$ and $\psi(x)\in C^{0}(\overline \Omega)\cap C_{loc}^{1}(\Omega\backslash \{x_{0}\})$ such that $\lim_{x\rightarrow x_{0}}F^{o}(x-x_{0})\nabla \psi(x)=0$.
\end{lm}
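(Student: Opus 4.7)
The plan is to compare $G_{\alpha}$ with the explicit anisotropic fundamental solution of $-Q_{n}$ centered at $x_{0}$,
\begin{equation*}
\Gamma(x):=-\frac{1}{(n\kappa_{n})^{1/(n-1)}}\log F^{o}(x-x_{0}),
\end{equation*}
and then to control the lower-order correction $G_{\alpha}-\Gamma$. The first step is to verify that $-Q_{n}\Gamma=\delta_{x_{0}}$ in $\mathcal{D}'(\Omega)$. Using parts (iv)--(vi) of Lemma \ref{2-01}, a direct computation gives $F(\nabla\Gamma)=(n\kappa_{n})^{-1/(n-1)}/F^{o}(x-x_{0})$ and
\begin{equation*}
F^{n-1}(\nabla\Gamma)F_{\xi}(\nabla\Gamma)=-\frac{1}{n\kappa_{n}}\,\frac{x-x_{0}}{F^{o}(x-x_{0})^{n}},
\end{equation*}
whose divergence vanishes on $\R^{n}\setminus\{x_{0}\}$ by part (iii). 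Pairing against a test function $\phi$, integrating by parts on $\Omega\setminus\mathcal{W}_{\epsilon}(x_{0})$ and invoking the anisotropic coarea identity $\int_{\partial\mathcal{W}_{\epsilon}(x_{0})}|\nabla F^{o}(x-x_{0})|^{-1}\,dS=n\kappa_{n}\epsilon^{n-1}$ (which follows from (\ref{2-02}) applied to $F^{o}(\cdot-x_{0})$ together with $|\mathcal{W}_{r}|=\kappa_{n}r^{n}$), one recovers $\phi(x_{0})$ as $\epsilon\to 0$.

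Next I would deal with the regularity of $\psi:=G_{\alpha}-\Gamma-C_{G}$. On $\Omega\setminus\{x_{0}\}$ the function $G_{\alpha}$ solves $-Q_{n}G_{\alpha}=\alpha G_{\alpha}^{n-1}$, and the argument used in Lemma \ref{4-30} (namely Lemma \ref{4-49} for $W^{1,q}$ bounds, Lemma \ref{2-04} for $L^{\infty}$ bounds, and Theorem~1 of \cite{T2} for $C^{1,\beta}$) gives $G_{\alpha}\in C^{1,\beta}_{\mathrm{loc}}(\Omega\setminus\{x_{0}\})\cap C^{0}(\overline{\Omega}\setminus\{x_{0}\})$; the same holds for $\psi$ away from $x_{0}$. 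The substantive task is the behavior at $x_{0}$: to identify $C_{G}$ so that $\psi(x)\to 0$ and $F^{o}(x-x_{0})|\nabla\psi(x)|\to 0$ as $x\to x_{0}$.

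For this I would adapt the isolated-singularity theory of Serrin and Kichenassamy--V\'eron for quasilinear $p$-Laplace-type operators to $Q_{n}$; this adaptation is legitimate because $\mathrm{Hess}(F^{n})$ is positive definite on $\R^{n}\setminus\{0\}$, so $Q_{n}$ is uniformly elliptic on compact subsets of $\R^{n}\setminus\{0\}$. Introduce the Wulff-spherical mean
\begin{equation*}
\bar G(r):=\frac{1}{n\kappa_{n}r^{n-1}}\int_{\partial\mathcal{W}_{r}(x_{0})}\frac{G_{\alpha}(x)}{|\nabla F^{o}(x-x_{0})|}\,dS;
\end{equation*}
integrating the equation on the Wulff annuli $\mathcal{W}_{R}(x_{0})\setminus\mathcal{W}_{r}(x_{0})$, applying the coarea formula, and using that $\alpha G_{\alpha}^{n-1}\in L^{1}_{\mathrm{loc}}$ yields $\bar G(r)=-\frac{1}{(n\kappa_{n})^{1/(n-1)}}\log r+C_{G}+o(1)$ as $r\to 0^{+}$. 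A Harnack inequality for $Q_{n}$ then upgrades the mean asymptotic to pointwise convergence, defining $\psi$ continuously at $x_{0}$ with $\psi(x_{0})=0$. Finally, rescaling by the Wulff-radial dilations $x\mapsto x_{0}+\lambda(x-x_{0})$ and applying the uniform $C^{1,\beta}$ bounds of \cite{T2} on unit Wulff annuli to the rescaled functions gives the required gradient decay.

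\textbf{Main obstacle.} The hard part is the final gradient estimate. Because $Q_{n}$ is genuinely nonlinear, $G_{\alpha}-\Gamma$ does not solve a linear equation, so linear Schauder theory cannot be invoked directly. The scaling argument that produces $F^{o}(x-x_{0})\nabla\psi\to 0$ relies essentially on the exact homogeneity of $\Gamma$ under the Wulff-radial dilations together with uniform $C^{1,\beta}$ estimates for the anisotropic quasilinear operator at every dyadic scale; managing both the anisotropy and the genuinely nonlinear lower-order term $\alpha G_{\alpha}^{n-1}$ simultaneously is the main technical difficulty compared with the Euclidean case.
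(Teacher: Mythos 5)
Your overall strategy --- compare $G_\alpha$ with the anisotropic logarithmic fundamental solution $\Gamma$ and control the correction by adapting the Serrin/Kichenassamy--V\'eron isolated-singularity theory --- is the same one the paper follows (the paper cites exactly \cite{KV}, \cite{S3}, and \cite{T2}). However, there is a genuine logical gap in the step that you present as the source of the leading-order asymptotic: the claim that integrating the equation over the Wulff annulus and using $\alpha G_\alpha^{n-1}\in L^1_{\mathrm{loc}}$ \emph{yields} $\bar G(r)=-\tfrac{1}{(n\kappa_n)^{1/(n-1)}}\log r+C_G+o(1)$. What integration by parts actually gives is the flux identity
\begin{equation*}
-\int_{\partial\mathcal{W}_{r}(x_{0})}F^{n-1}(\nabla G_{\alpha})\langle F_{\xi}(\nabla G_{\alpha}),\nu\rangle\,dS = 1+\alpha\int_{\mathcal{W}_{r}(x_{0})}G_{\alpha}^{n-1}\,dx \longrightarrow 1 ,
\end{equation*}
a nonlinear boundary integral; this only reduces to an ODE for the Wulff-spherical mean $\bar G(r)$ after one has established that $G_\alpha$ is asymptotically Wulff-radial near $x_0$, i.e.\ that the oscillation of $G_\alpha$ on $\partial\mathcal{W}_r(x_0)$ relative to its size is $o(1)$ as $r\to 0$. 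That asymptotic radiality is precisely the hard part, and you cannot get it from a Harnack inequality \emph{after} the mean asymptotic --- the order must be the other way around.

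The paper's Section~6 fills this gap with a substantial argument that your sketch omits. First, Serrin's theorem gives the two-sided bound $K^{-1}\leq G_\alpha/(-\log r)\leq K$. Then it blows up: $G_k:=G_\alpha(r_kx)/\Gamma(r_k)$ is shown, via Tolksdorf $C^{1,\beta}$ estimates and a removable-singularity plus Liouville theorem, to converge to a \emph{constant} $\gamma$. With $\gamma$ in hand, explicit barrier functions $G_\eta^{\pm}(x)=(\gamma\pm\eta)(\Gamma(x)-\Gamma(\delta))\mp c(n)(\gamma\pm\eta)(F^o(x)-\delta)+\cdots$ and the comparison principle of \cite{XG} give boundedness of $G_\alpha-\gamma\Gamma$; a further careful trichotomy (sup attained in the interior, at $0$, or at $\infty$), combined with rescaling, the strong maximum principle, and Tolksdorf again, upgrades boundedness to $C^0$ at $x_0$ together with the gradient decay $F^o(x-x_0)\nabla\psi\to 0$. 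Only at the very end does the paper use the flux identity you wrote down --- and then only to conclude $\gamma=1$, after the asymptotic $G_\alpha(x)=\gamma\Gamma(x)+o(1)$, $\nabla G_\alpha=\gamma\nabla\Gamma+o(1/F^o(x))$ is already in hand. So your flux computation is correct, but you are using it for the wrong step: it pins the normalization once the structure is known, it cannot produce the structure.

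Two smaller points. First, your remark that $Q_n$ is ``uniformly elliptic on compact subsets of $\R^n\setminus\{0\}$'' is misleading: $Q_n$ degenerates where $\nabla u=0$ regardless of where $x$ lies; the correct statement is that the degenerate quasilinear regularity theory (\cite{T2}, \cite{E}) still yields $C^{1,\beta}_{\mathrm{loc}}$. Second, as you yourself acknowledge, the gradient decay must come from a rescaling argument; the paper's version of this is the $w_r(x)=G_\alpha(rx)-\gamma\Gamma(r)$ blow-up with locally uniform $C^{1,\beta}$ bounds, so your sketch there is essentially correct but incomplete without the preceding boundedness and continuity steps.
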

Up to now, we have described the convergence behavior of $u_{\epsilon}$ near $x_{0}$ and away from $x_{0}$ when the concentration point $x_{0}$ in the interior of $\Omega$.

\
\

\noindent \textbf{Case 2.} $x_{0}$ lies on $\partial \Omega$.
\begin{lm}\label{4-33}
Let $d_{\epsilon}=dist(x_{\epsilon},\partial \Omega)$, and $r_{\epsilon}$ be defined in (\ref{4-7}). There holds $r_{\epsilon}/d_{\epsilon}\rightarrow 0$.
\end{lm}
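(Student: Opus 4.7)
The plan is to argue by contradiction. Suppose $r_\epsilon/d_\epsilon \not\to 0$; then along a subsequence $d_\epsilon/r_\epsilon \to L \in [0,\infty)$. I would rescale $u_\epsilon$ around $x_\epsilon$ at scale $r_\epsilon$ and show that the limit, which lives on a half-space with zero Dirichlet data but value $1$ at the origin, is incompatible either with the boundary condition (if $L=0$) or with the strong maximum principle (if $L>0$).

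First, I would choose Euclidean coordinates so that $x_\epsilon = 0$ and the inward unit normal to $\partial \Omega$ at the boundary point closest to $x_\epsilon$ is $e_n$. Since $\partial \Omega$ is smooth and $r_\epsilon \to 0$, the rescaled domain $\Omega_\epsilon := (\Omega - x_\epsilon)/r_\epsilon$ converges, in the local Hausdorff sense and smoothly on compacta, to the half-space $H := \{y \in \R^n : y_n > -L\}$. Setting $v_\epsilon(y) = u_\epsilon(x_\epsilon + r_\epsilon y)/M_\epsilon$ as in Case~1, one has $0 \le v_\epsilon \le 1$, $v_\epsilon(0) = 1$, $v_\epsilon \equiv 0$ on $\partial \Omega_\epsilon$, and $v_\epsilon$ satisfies the same rescaled equation as in the interior analysis, whose right-hand side is uniformly bounded by a constant tending to $0$.

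Next, I would invoke the boundary $C^{1,\alpha}$ regularity for quasilinear equations of $p$-Laplace type on smooth domains with zero Dirichlet data (Lieberman) to obtain uniform $C^{1,\alpha}$ bounds on $v_\epsilon$; combined with $\partial \Omega_\epsilon \to \partial H$ and Ascoli--Arzel\`a, this yields a subsequence with $v_\epsilon \to v$ in $C^1_{loc}(\overline H)$, where $v$ is $Q_n$-harmonic on $H$, $0 \le v \le 1$, $v(0) = 1$ and $v \equiv 0$ on $\partial H$. If $L = 0$ then $0 \in \partial H$, so the boundary condition forces $v(0) = 0$, contradicting $v(0) = 1$; alternatively one sees the contradiction directly at the $v_\epsilon$ level via the uniform Lipschitz bound, since the nearest boundary point of $\Omega_\epsilon$ to $0$ has distance $d_\epsilon/r_\epsilon \to 0$ and value $0$. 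If $L > 0$ then $v$ attains its maximum at the interior point $0 \in H$, and the strong maximum principle for the Finsler $n$-Laplacian (a standard Harnack-type consequence in the spirit of the $p$-Laplacian case) forces $v \equiv 1$ on $H$, again contradicting $v \equiv 0$ on $\partial H$. Either way a contradiction arises, so $r_\epsilon/d_\epsilon \to 0$. The main technical obstacles will be the uniform boundary $C^{1,\alpha}$ regularity of $v_\epsilon$ on the moving domains $\Omega_\epsilon$ and the strong maximum principle for $Q_n$, both of which are standard adaptations of the $p$-Laplacian theory to the anisotropic setting under the convexity and ellipticity hypotheses on $F$.
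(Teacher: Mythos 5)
Your argument is correct and reaches the contradiction by the same mechanism as the paper: rescale at scale $r_\epsilon$, pass to a half-space limit, and appeal to the Liouville/strong maximum principle for $Q_n$-harmonic functions. The technical route differs slightly. The paper recenters the rescaling at the nearest boundary point $y_\epsilon$ rather than at $x_\epsilon$, so the limiting domain is always $\{y_n>0\}$ with $0$ on its boundary; no case split on $L$ is needed, and the single fact $\overline v_\epsilon(0)=0$ versus $\|\overline v_\epsilon\|_{L^\infty(\overline{B_R^+})}=1$ already produces the contradiction once one has $C^1$ convergence up to the flat boundary. To get that convergence the paper invokes a reflection argument, whereas you invoke boundary $C^{1,\alpha}$ regularity (Lieberman) on the smoothly converging domains $\Omega_\epsilon$; these are interchangeable here, though your route is arguably safer in the anisotropic setting, since an odd reflection preserves $Q_n$-harmonicity only if $F$ enjoys the extra symmetry $F(\xi',-\xi_n)=F(\xi',\xi_n)$, which the paper does not assume. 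Your case split ($L=0$ handled by the boundary condition, $L>0$ by the strong maximum principle) is an artifact of centering at $x_\epsilon$ and is handled correctly. One small point to make explicit: the uniformity of Lieberman's estimate across the varying domains $\Omega_\epsilon$ relies on uniform $C^{1,\alpha}$ bounds on $\partial\Omega_\epsilon$ near $0$, which follow from the smoothness of $\partial\Omega$ and $r_\epsilon\to0$.
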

\begin{proof}
Suppose not, there exists $R>0$ such that $d_{\epsilon}\leq Rr_{\epsilon}$. Take some $y_{\epsilon}\in \partial \Omega$ such that $d_{\epsilon}=|x_{\epsilon}-y_{\epsilon}|$. Let $\overline v_{\epsilon}=M_{\epsilon}^{-1}u_{\epsilon}(y_{\epsilon}+r_{\epsilon}x)$. By a reflection argument, similar to the case 1, we have $\overline v_{\epsilon}\rightarrow 1$ in $C^{1}(B_{R}^{+})$ for $||\overline v_{\epsilon}||_{L^{\infty}(\overline {B_{R}^{+}})}=1$. This contradicts $\overline v_{\epsilon}(0)=0$.

\end{proof}

Set
\begin{equation*}
\Omega_{\epsilon}=\{x\in \mathbb{R}^{n}|x_{\epsilon}+r_{\epsilon}x\in \Omega \}.
\end{equation*}
By Lemma \ref{4-33}, we have $\lim_{\epsilon\rightarrow 0}\frac{dist(x_{\epsilon},\partial \Omega)}{r_{\epsilon}}\rightarrow +\infty$, then $\Omega_{\epsilon}\rightarrow \mathbb{R}^{n}$. Let $w_{\epsilon}$ be define in (\ref{4-6}) and  $w$ be define in (\ref{4-13}). Similar arguments to Case 1 imply that $w_{\epsilon}\rightarrow w$ in $C_{loc}^{1}(\mathbb{R}^{n})$. We proceed as in Case 1, $M_{\epsilon}^{\frac{1}{n-1}}u_{\epsilon}\rightharpoonup \overline G_{\alpha}$ weakly in $W_{0}^{1,n}(\Omega)$ , and in $C^{1}(\Omega)$, where $\overline G$ satisfies the following equation:
\begin{equation*}
\left\{
\begin{array}{rrrl}
-Q_{n}\overline G_{\alpha} &=&0   \qquad & in~~~\Omega,\\
\overline G_{\alpha}&=& 0     \qquad & in~~~\partial\Omega.
\end{array}
\right.
\end{equation*}
The above equation has a unique solution $\overline G=0$. Hence
\begin{equation}\label{4-34}
M_{\epsilon}^{\frac{1}{n-1}}u_{\epsilon}\rightharpoonup 0~~~~~~~~ weakly~~in~~W_{0}^{1,n}(\Omega),~~~~~~M_{\epsilon}^{\frac{1}{n-1}}u_{\epsilon}\rightarrow 0~~~~~~~ in~~~C^{1}(\overline \Omega\backslash\{x_{0}\}).
\end{equation}
This is all we need to know about the convergence behavior of $u_{\epsilon}$ when the concentration point $x_{0}$ lies on the boundary of $\Omega$.

{\bf{The proof point (1) of Theorem \ref{1-04}}} . If $M_{\epsilon}$ is bounded, elliptic estimates implies that the Theorem holds. If $M_{\epsilon}\rightarrow +\infty$,
then we have $||u_{\epsilon}||_{L^{n}(\Omega)}\rightarrow 0$. A straightforward calculation gives
\begin{eqnarray*}
J_{\lambda_{n}-\epsilon}^{\alpha}(u_{\epsilon})&=&\int_{\Omega}
e^{(\lambda_{n}-\epsilon)|u_{\epsilon}|^{\frac{n}{n-1}}((1+\alpha||u_{\epsilon}||_{L^{n}(\Omega)}^{n})^{\frac{1}{n-1}}-1)}
e^{(\lambda_{n}-\epsilon)|u_{\epsilon}|^{\frac{n}{n-1}}}dx\\
&\leq& e^{\lambda_{n}M_{\epsilon}^{\frac{n}{n-1}}((1+\alpha||u_{\epsilon}||_{L^{n}(\Omega)}^{n})^{\frac{1}{n-1}}-1)}
\int_{\Omega}e^{\lambda_{n}|u_{\epsilon}|^{\frac{n}{n-1}}}dx\\
&=&e^{\frac{\lambda_{n}\alpha}{n-1}||M_{\epsilon}^{\frac{1}{n-1}}u_{\epsilon}||_{L^{n}(\Omega)}^{n}+M_{\epsilon}^{-\frac{n}{n-1}}O(||M_{\epsilon}^{\frac{1}{n-1}}u_{\epsilon}||_{L^{n}(\Omega)}^{2n})}
\int_{\Omega}e^{\lambda_{n}|u_{\epsilon}|^{\frac{n}{n-1}}}dx.
\end{eqnarray*}
Notice that $\alpha$ satisfies $0\leq \alpha<\lambda_{1}(\Omega)$. When $x_{0}\in \Omega$, we have $||M_{\epsilon}^{\frac{1}{n-1}}u_{\epsilon}||_{L^{n}(\Omega)}\rightarrow ||G_{\alpha}||_{L^{n}(\Omega)}^{n}$, when $x_{0}\in \partial \Omega$,$||M_{\epsilon}^{\frac{1}{n-1}}u_{\epsilon}||_{L^{n}(\Omega)}\rightarrow 0$.
Hence, together with Lemma \ref{3-10} and (\ref{1-02}) completes the proof of point (1) of Theorem \ref{1-04}.

\section{proof of theorem \ref{1-05}}
In this section, we will prove our main Theorem.  we first give a Lemma in \cite{ZZ}.

 \begin{lm}\label{5-1}
 Assume that ${u_{\epsilon}}$ is a normalized concentrating sequence in $W_{0}^{1, n}(\mathcal{W}_{\rho})$ with a blow up point at the origin, i.e.
 \begin{enumerate}
\item [(1)] $\int_{\mathcal{W}_{\rho}}F^{n}(\nabla u_{\epsilon})dx=1$,
\item [(2)] $ u_{\epsilon}\rightharpoonup 0 \text{ weakly in }  W_{0}^{1, n}(\mathcal{W}_{\rho})$,
\item [(3)] for any  $0<r<\rho$, $ \int_{\mathcal{W}_{\rho}\backslash \mathcal{W}_{r}}F^{n}(\nabla u_{\epsilon})dx\rightarrow 0$.
\end{enumerate}

 Then
 \begin{equation}\label{5-2}
  \limsup_{\epsilon\rightarrow 0}\int_{\mathcal{W}_{\rho}}(e^{\lambda_{n}|u_{\epsilon}|^{\frac{n}{n-1}}}-1)dx\leq\kappa_{n} \rho^{n}e^{1+\frac{1}{2}+\cdots+\frac{1}{n-1}}.
 \end{equation}
\end{lm}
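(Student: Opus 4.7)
The strategy is to reduce to the classical one-dimensional Carleson-Chang estimate via the anisotropic convex rearrangement of $u_\epsilon$. First I would replace $u_\epsilon$ by its convex symmetrization $u_\epsilon^\star$ with respect to $F$, which by definition takes the form $u_\epsilon^\star(x)=g_\epsilon(F^o(x))$ for a nonnegative decreasing function $g_\epsilon$ on $[0,\rho]$ with $g_\epsilon(\rho)=0$. Equimeasurability preserves the exponential integral exactly, and the anisotropic Polya-Szego inequality proved in \cite{AVP} gives $\int_{\mathcal{W}_\rho}F^n(\nabla u_\epsilon^\star)\,dx\leq\int_{\mathcal{W}_\rho}F^n(\nabla u_\epsilon)\,dx=1$. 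Since $\|u_\epsilon^\star\|_{L^\infty}=\|u_\epsilon\|_{L^\infty}$ must blow up in the nontrivial case, a standard argument for radial decreasing functions (using the bounded Dirichlet norm together with the divergence of the $L^\infty$ norm) shows that the gradient energy of $u_\epsilon^\star$ also concentrates at the origin, so condition (3) transfers.

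After this reduction, I would apply the logarithmic change of variables $t=\log(\rho/r)$ and set $\phi_\epsilon(t)=(n\kappa_n)^{1/n}g_\epsilon(\rho e^{-t})$. Using $F(\nabla F^o)=1$ from Lemma \ref{2-01}(iv), the formula $|\mathcal{W}_r|=\kappa_n r^n$, and the identity $\lambda_n(n\kappa_n)^{-1/(n-1)}=n$ that follows directly from $\lambda_n=n^{n/(n-1)}\kappa_n^{1/(n-1)}$, the problem transforms to estimating
\begin{equation*}
\int_{\mathcal{W}_\rho}e^{\lambda_n|u_\epsilon^\star|^{n/(n-1)}}\,dx = n\kappa_n\rho^n\int_0^\infty e^{n\phi_\epsilon(t)^{n/(n-1)}-nt}\,dt,
\end{equation*}
subject to $\phi_\epsilon(0)=0$, $\int_0^\infty|\phi_\epsilon'(t)|^n\,dt\leq 1$, and $\int_0^T|\phi_\epsilon'|^n\,dt\to 0$ for every fixed $T>0$ (the spatial concentration at the origin becomes concentration of the one-dimensional energy at $+\infty$). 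At this point I would invoke the classical one-dimensional Carleson-Chang lemma (\cite{CC}, \cite{L2}), which under exactly these hypotheses yields
\begin{equation*}
\limsup_{\epsilon\to 0}\int_0^\infty e^{n\phi_\epsilon(t)^{n/(n-1)}-nt}\,dt\leq\frac{1+e^{H_{n-1}}}{n},\qquad H_{n-1}=1+\tfrac{1}{2}+\cdots+\tfrac{1}{n-1}.
\end{equation*}
Multiplying by $n\kappa_n\rho^n$ and subtracting $|\mathcal{W}_\rho|=\kappa_n\rho^n$ gives exactly the claimed bound.

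The main obstacle is the one-dimensional Carleson-Chang estimate itself, which is the technical heart of the argument. Its proof compares $\phi_\epsilon(t)$ with the extremal profile $t^{(n-1)/n}$ (the Holder saturator when all the mass of $|\phi_\epsilon'|^n$ lies past $t$) and uses the concentration at $+\infty$ to control the ``overshoot'' region where $n\phi_\epsilon^{n/(n-1)}>nt$; the explicit constant $e^{H_{n-1}}$ emerges from a layer-by-layer analysis of competitors to the optimal profile. A secondary subtlety is checking that convex symmetrization preserves the concentration hypothesis (3); this follows because the $L^\infty$ norm of $u_\epsilon^\star$ diverges by equimeasurability while its anisotropic Dirichlet norm remains bounded by one, forcing the level sets $\{u_\epsilon^\star>t\}$ for $t$ near the maximum to collapse to Wulff balls of vanishing radius around the origin.
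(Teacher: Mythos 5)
The paper does not prove Lemma \ref{5-1} here; it is quoted from reference \cite{ZZ}, so the comparison is against the standard argument that reference would use, and your route (convex symmetrization with respect to $F$ followed by the logarithmic change of variables reducing to a one-dimensional Carleson--Chang/Lin estimate) is indeed that standard argument, and all the normalization bookkeeping you carry out (the identity $\lambda_n(n\kappa_n)^{-1/(n-1)}=n$, the Jacobian $n\kappa_n\rho^n e^{-nt}\,dt$, the constant $(1+e^{H_{n-1}})/n$ in your scaling) is correct.

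The one step that does not hold as written is your justification that the concentration hypothesis (3) transfers to $u_\epsilon^\star$. You argue that $\|u_\epsilon^\star\|_{L^\infty}\to\infty$ together with $\int_{\mathcal W_\rho}F^n(\nabla u_\epsilon^\star)\le 1$ ``forces'' the gradient energy of $u_\epsilon^\star$ to concentrate at the origin, but this is false in general: a radial decreasing function can split its Dirichlet energy between a thin spike near $0$ (which drives $\|\cdot\|_\infty\to\infty$) and a fixed annular shell, with both halves of the energy bounded away from zero, so a divergent maximum plus bounded energy does not yield $\int_{\mathcal W_\rho\setminus\mathcal W_r}F^n(\nabla u_\epsilon^\star)\to 0$. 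More to the point, the anisotropic P\'olya--Szeg\H{o} inequality controls the gradient energy of $u_\epsilon^\star$ only level set by level set, not annulus by annulus, so hypothesis (3) for $u_\epsilon$ gives no direct spatial localisation for $u_\epsilon^\star$. The cleanest repair is a dichotomy: if the rearranged sequence does satisfy (3), run Carleson--Chang exactly as you describe; if it does not, i.e.\ $\liminf_\epsilon\int_{\mathcal W_\rho\setminus\mathcal W_r}F^n(\nabla u_\epsilon^\star)\ge\delta>0$ for some $r$, then $\tilde u_\epsilon:=(u_\epsilon^\star-u_\epsilon^\star(r'))^+$ has anisotropic Dirichlet energy at most $1-\delta$ on $\mathcal W_{r'}$ for every $r'\le r$, while $u_\epsilon^\star(r')\to 0$, so the sharp inequality (\ref{1-02}) applied to $\tilde u_\epsilon/(1-\delta)^{1/n}$ gives $\int_{\mathcal W_{r'}}e^{\lambda_n|u_\epsilon^\star|^{n/(n-1)}}\,dx\le C(n)\kappa_n (r')^n(1+o_\epsilon(1))$, and since $u_\epsilon^\star\to 0$ uniformly on $\mathcal W_\rho\setminus\mathcal W_{r'}$ the outer integral tends to $\kappa_n(\rho^n-(r')^n)$; letting $r'\to 0$ yields $\limsup_\epsilon\int_{\mathcal W_\rho}(e^{\lambda_n|u_\epsilon|^{n/(n-1)}}-1)\,dx\le 0$, which is trivially below the claimed bound. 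With that dichotomy inserted (and the trivial case $\|u_\epsilon\|_\infty=O(1)$ dispatched by dominated convergence, as you note), the proposal is complete.
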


Motivated by the arguments in \cite{CC, WY,Y1}, we first compute an upper bound of $T_0$ if $u_\epsilon$ blows up.

\begin{lm}\label{5-00}
If $\limsup_{\epsilon\rightarrow 0}||u_{\epsilon}||_{\infty}=\infty$,  then
\begin{equation}\label{5-01}
\sup_{u\in \mathcal{H}}J_{\lambda_{n}}^{\alpha}(u)\leq |\Omega|+\kappa_{n} e^{\lambda_{n}C_{G}+1+\frac{1}{2}+\cdots+\frac{1}{n-1}}.
\end{equation}
\end{lm}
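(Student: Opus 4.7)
My plan is to use Lemmas \ref{3-10} and \ref{4-21} to reduce the task to bounding $\limsup_{\epsilon\to 0}\lambda_\epsilon/M_\epsilon^{n/(n-1)}$, and then combine the anisotropic Carleson--Chang-type estimate Lemma \ref{5-1} with the Green function asymptotics Lemma \ref{4-66} via a truncation on a small Wulff ball about the blow-up point $x_0$.

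I focus on the interior case $x_0\in\Omega$; the boundary case is simpler because $M_\epsilon^{1/(n-1)}u_\epsilon\to 0$ on $\overline\Omega\setminus\{x_0\}$ by (\ref{4-34}), which yields a strictly smaller bound. Combining Lemma \ref{3-10} with (\ref{4-19}), (\ref{4-23}), and Lemma \ref{4-21} gives
\[
\sup_{u\in\mathcal{H}}J_{\lambda_n}^\alpha(u)=|\Omega|+\limsup_{\epsilon\to 0}\frac{\lambda_\epsilon}{M_\epsilon^{n/(n-1)}},
\]
so it is enough to show $\limsup_{\epsilon\to 0}\lambda_\epsilon/M_\epsilon^{n/(n-1)}\le\kappa_n e^{\lambda_n C_G+1+\frac{1}{2}+\cdots+\frac{1}{n-1}}$. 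Fix a small $\delta>0$ with $\mathcal{W}_\delta(x_0)\subset\Omega$, and set $s_\epsilon=\max_{\partial\mathcal{W}_\delta(x_0)}u_\epsilon$. The $C^1_{\mathrm{loc}}$-convergence $M_\epsilon^{1/(n-1)}u_\epsilon\to G_\alpha$ from Lemma \ref{4-30}, combined with the expansion in Lemma \ref{4-66}, yields
\[
M_\epsilon^{1/(n-1)}s_\epsilon=-\frac{\log\delta}{(n\kappa_n)^{1/(n-1)}}+C_G+o_\delta(1)+o_\epsilon(1).
\]

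The central step applies Lemma \ref{5-1} to the truncation $\tilde u_\epsilon=(u_\epsilon-s_\epsilon)^+$, viewed as an element of $W_0^{1,n}(\mathcal{W}_\delta(x_0))$ after extension by zero. Setting $\mu_\epsilon=\int F^n(\nabla\tilde u_\epsilon)\,dx$, Lemma \ref{4-000} together with the strong $L^q$-convergence of $u_\epsilon$ to zero gives $\mu_\epsilon\to 1$ as $\epsilon\to 0$ and then $\delta\to 0$, so $\tilde u_\epsilon/\mu_\epsilon^{1/n}$ is a normalized sequence concentrating at $x_0$ on $\mathcal{W}_\delta(x_0)$. Lemma \ref{5-1} then furnishes
\[
\limsup_{\epsilon\to 0}\int_{\mathcal{W}_\delta(x_0)}e^{\lambda_n(\tilde u_\epsilon/\mu_\epsilon^{1/n})^{n/(n-1)}}\,dx\le\kappa_n\delta^n\bigl(1+e^{1+\frac{1}{2}+\cdots+\frac{1}{n-1}}\bigr).
\]
To translate this into a bound on $\int_{\mathcal{W}_\delta(x_0)}e^{\alpha_\epsilon u_\epsilon^{n/(n-1)}}\,dx$, I decompose $\mathcal{W}_\delta(x_0)=\{u_\epsilon\le s_\epsilon\}\cup\{u_\epsilon>s_\epsilon\}$. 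On the first set $u_\epsilon=O(M_\epsilon^{-1/(n-1)})$ and the integrand is $1+o(1)$, contributing at most $\kappa_n\delta^n+o(1)$, which is absorbed into $|\Omega|$ after $\delta\to 0$. On the second, where $u_\epsilon=\tilde u_\epsilon+s_\epsilon$, the convexity inequality $u_\epsilon^{n/(n-1)}-\tilde u_\epsilon^{n/(n-1)}\le\frac{n}{n-1}u_\epsilon^{1/(n-1)}s_\epsilon$ and the asymptotics $\alpha_\epsilon\to\lambda_n$, $\mu_\epsilon\to 1$ combine with the Lemma \ref{5-1} bound to produce, in the double limit $\epsilon\to 0$, $\delta\to 0$, the target bound $\kappa_n e^{\lambda_n C_G+1+\frac{1}{2}+\cdots+\frac{1}{n-1}}$.

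The principal technical difficulty lies in this last translation. A blanket estimate $u_\epsilon^{1/(n-1)}\le M_\epsilon^{1/(n-1)}$ in the exponent yields a prefactor whose $\delta$-power does not cancel the $\delta^n$ from Lemma \ref{5-1}. The fix is to further localize the Taylor expansion on $\{u_\epsilon>s_\epsilon\}$ along sub-level sets of $u_\epsilon/M_\epsilon$, using the explicit blow-up profile $w$ from (\ref{4-13}) and the Green function expansion of Lemma \ref{4-66} to verify that the correction $u_\epsilon^{1/(n-1)}s_\epsilon$ actually attains its maximum only on the vanishingly small concentration region $\mathcal{W}_{Rr_\epsilon}(x_\epsilon)$; tracking the measures of these level sets with the profile $w$ is what produces the precise $\delta^n$-cancellation and the clean constant $e^{\lambda_n C_G}$.
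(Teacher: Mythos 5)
Your overall strategy matches the paper's: isolate the blow-up contribution via Lemmas \ref{3-10} and \ref{4-21}, subtract the boundary value $b_\epsilon=\sup_{\partial\mathcal{W}_\delta(x_0)}u_\epsilon$, feed the truncated function into the Carleson--Chang-type Lemma \ref{5-1}, and use the Green-function expansion of Lemma \ref{4-66} together with the asymptotic $M_\epsilon^{1/(n-1)}b_\epsilon\to -\frac{\log\delta}{(n\kappa_n)^{1/(n-1)}}+C_G+o_\delta(1)$ to recover the constant $e^{\lambda_n C_G}$. The boundary case is also handled similarly. So the ingredients are all correct.

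The gap is in your "last translation" step, and your own fix there is not adequate. You try to estimate $\int_{\mathcal{W}_\delta(x_0)}e^{\alpha_\epsilon u_\epsilon^{n/(n-1)}}dx$ directly on the level set $\{u_\epsilon>s_\epsilon\}$, and you correctly observe that the bound $u_\epsilon^{1/(n-1)}s_\epsilon\le M_\epsilon^{1/(n-1)}s_\epsilon$ is too crude off the concentration core. Your remedy — "localize the Taylor expansion along sub-level sets of $u_\epsilon/M_\epsilon$" and "track the measures of these level sets with the profile $w$" — is an unsubstantiated sketch: neither the profile $w$ nor the Green-function expansion directly controls $|\{u_\epsilon/M_\epsilon>t\}\cap\mathcal{W}_\delta|$ at scales intermediate between $r_\epsilon$ and $\delta$, and nothing in the paper's toolbox establishes such level-set measure asymptotics. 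The paper sidesteps this entirely: Lemma \ref{4-21} already tells you that only $\lim_R\limsup_\epsilon\int_{\mathcal{W}_{Rr_\epsilon}(x_\epsilon)}e^{\alpha_\epsilon u_\epsilon^{n/(n-1)}}dx$ needs to be bounded, and on $\mathcal{W}_{Rr_\epsilon}(x_\epsilon)$ one has $u_\epsilon=M_\epsilon+o_\epsilon(R)$, so the pointwise bound
\begin{equation*}
\alpha_\epsilon|u_\epsilon|^{\frac{n}{n-1}}\le\frac{\lambda_n\overline u_\epsilon^{\frac{n}{n-1}}}{\tau_\epsilon^{\frac{1}{n-1}}}-n\log\delta+\lambda_n C_G+o_\delta(1)+o_\epsilon(1)
\end{equation*}
is uniform on the bubble (using the precise upper bound on $\tau_\epsilon$, not merely $\tau_\epsilon\to 1$). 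Then $\mathcal{W}_{Rr_\epsilon}(x_\epsilon)\subset\mathcal{W}_\delta(x_0)$ gives $\int_{\mathcal{W}_{Rr_\epsilon}(x_\epsilon)}(e^{\lambda_n|\overline u_\epsilon/\tau_\epsilon^{1/n}|^{n/(n-1)}}-1)dx\le\int_{\mathcal{W}_\delta(x_0)}(\cdots)dx$, to which Lemma \ref{5-1} applies, and the factor $\delta^{-n}$ from the pointwise bound cancels the $\delta^n$ from Lemma \ref{5-1} exactly. You should restrict the exponent estimate to the bubble from the outset rather than to all of $\{u_\epsilon>s_\epsilon\}$; also note that merely "$\mu_\epsilon\to 1$" discards the quantitative information on $\tau_\epsilon$ that is essential to extract $e^{\lambda_n C_G}$.
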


\begin{proof}
\textbf{Case 1.} $x_{0}$ lies in the interior of $\Omega$.

Note that $x_0$ is a blow-up point of $u_\varepsilon $. We chose $\mathcal{W}_{\delta}(x_{0}) \subset \Omega$ for sufficient small $\delta>0$. By \eqref{4-31} we have
\begin{eqnarray}\label{5-02}
& &\int_{\Omega\backslash \mathcal{W}_{\delta}(x_{0})}F^{n}(\nabla G_{\alpha})dx\nonumber\\
&=&\int_{\partial (\Omega\backslash \mathcal{W}_{\delta}(x_{0}))}G_{\alpha}F^{n-1}(\nabla G_{\alpha})\langle F_{\xi}(\nabla G_{\alpha}),\nu\rangle ds-\int_{\Omega\backslash \mathcal{W}_{\delta}(x_{0})}G_{\alpha} div(F^{n-1}(\nabla G_{\alpha})F_{\xi}(\nabla G_{\alpha})) dx\nonumber\\
& &+\alpha\int_{\Omega\backslash \mathcal{W}_{\delta}(x_{0})}|G_{\alpha}|^{n}dx\nonumber\\
&=&-\int_{\partial \mathcal{W}_{\delta}(x_{0})}G_{\alpha}F^{n-1}(\nabla G_{\alpha})\langle F_{\xi}(\nabla G_{\alpha}),\nu\rangle ds+\alpha\int_{\Omega\backslash \mathcal{W}_{\delta}(x_{0})}|G_{\alpha}|^{n}dx.
\end{eqnarray}

Due to (\ref{4-39}) and Lemma \ref{2-01}, we have on $\partial \mathcal{W}_{\delta}(x_{0})$
\begin{eqnarray}\label{5-03}
F(\nabla G_{\alpha})&=&F(-\frac{1}{(n\kappa_{n})^{\frac{1}{n-1}}}\frac{\nabla F^{o}(x-x_{0})}{F^{o}(x-x_{0})}+o(\frac{1}{F^{o}(x-x_{0})}))\nonumber\\
&=&\frac{1}{(n\kappa_{n})^{\frac{1}{n-1}}\delta}+o(\frac{1}{\delta}).
\end{eqnarray}
and \begin{eqnarray}\label{5-04}
\langle F_{\xi}(\nabla G_{\alpha}),\nu\rangle&=&\langle F_{\xi}(\nabla G_{\alpha}),\frac{\nabla F^{o}(x-x_{0})}{|\nabla F^{o}(x-x_{0})|}\rangle\nonumber\\
&=&\langle F_{\xi}(\nabla G_{\alpha}),(-(n\kappa_{n})^{\frac{1}{n-1}}F^{o}(x-x_{0}))\frac{\nabla G_{\alpha}-o(\frac{1}{F^{o}(x-x_{0})})}{|\nabla F^{o}(x-x_{0})|}\rangle\nonumber\\
&=&-(n\kappa_{n})^{\frac{1}{n-1}}\delta(\frac{F(\nabla G_{\alpha})}{|\nabla F^{o}(x-x_{0})|}-\frac{o(\frac{1}{\delta})}{|\nabla F^{o}(x-x_{0})|})\nonumber\\
&=&-(1+o_{\delta}(1))\frac{1}{|\nabla F^{o}(x-x_{0})|}.
\end{eqnarray}
where $o_{\delta}(1)\rightarrow 0$ as $\delta\rightarrow 0$.
Putting (\ref{4-39}), (\ref{5-03}), (\ref{5-04}) into  (\ref{5-02}), we obtain
\begin{equation}\label{5-05}
\int_{\Omega\backslash \mathcal{W}_{\delta}(x_{0})}F^{n}(\nabla G_{\alpha})dx=-\frac{1}{(n\kappa_{n})^{\frac{1}{n-1}}}\log \delta+C_{G}+\alpha\int_{\Omega\backslash \mathcal{W}_{\delta}(x_{0})}|G_{\alpha}|^{n}dx+o_{\delta}(1)
\end{equation}

Hence from Lemma \ref{4-30} we have
\begin{equation}\label{5-06}
\int_{\Omega\backslash \mathcal{W}_{\delta}(x_{0})}F^{n}(\nabla u_{\epsilon})dx=\frac{1}{M_{\epsilon}^{\frac{n}{n-1}}}(-\frac{1}{(n\kappa_{n})^{\frac{1}{n-1}}}\log \delta+C_{G}+\alpha\int_{\Omega\backslash \mathcal{W}_{\delta}(x_{0})}|G_{\alpha}|^{n}dx+o_{\delta}(1)+o_{\epsilon}(1)),
\end{equation}
where $o_{\epsilon}(1)\rightarrow 0$ as $\epsilon\rightarrow 0$.

Next we let $b_{\epsilon}=\sup_{\partial \mathcal{W}_{\delta}(x_{0})}u_{\epsilon}$  and  $\overline u_{\epsilon}=(u_{\epsilon}-b_{\epsilon})^{+}$. Then $\overline u_{\epsilon}\in W_{0}^{1,n}(\mathcal{W}_{\delta}(x_{0}))$.  From (\ref{5-06}) and the fact that $\int_{\mathcal{W}_{\delta}(x_{0})}F^{n}(\nabla u_{\epsilon})dx=1- \int_{\Omega\backslash \mathcal{W}_{\delta}(x_{0})}F^{n}(\nabla u_{\epsilon})dx$,
we have
$$\int_{\mathcal{W}_{\delta}(x_{0})}F^{n}(\nabla\overline u_{\epsilon})dx= \tau_{\epsilon} \leq 1-\frac{1}{M_{\epsilon}^{\frac{n}{n-1}}}(-\frac{1}{(n\kappa_{n})^{\frac{1}{n-1}}}\log \delta+C_{G}+\alpha\int_{\Omega\backslash \mathcal{W}_{\delta}(x_{0})}|G_{\alpha}|^{n}dx+o_{\delta}(1)+o_{\epsilon}(1)).   $$
By Lemma \ref{5-1},
\begin{equation*}
\limsup_{\epsilon\rightarrow 0}\int_{\mathcal{W}_{\delta}(x_{0})}(e^{\lambda_{n}|\overline u_{\epsilon}/\tau_{\epsilon}^{1/n}|^{\frac{n}{n-1}}}-1)dx\leq \kappa_{n} \delta^{n}e^{1+\frac{1}{2}+\cdots+\frac{1}{n-1}}.
\end{equation*}

Now we focus on the estimate in the bubbling domain $\mathcal{W}_{Rr_{\epsilon}}(x_{\epsilon})$. According to the rescaling functions in Section 4, we can assume that  $w_{\epsilon}\rightarrow w$  in  $C_{loc}^{1}(\mathbb{R}^{n})$,  and whence $u_{\epsilon}=M_{\epsilon}+o_{\epsilon}(R)$, where  $o_{\epsilon}(R)\rightarrow 0$  as $\epsilon\rightarrow 0$ for any fixed $R>0$. Then from  Lemma \ref{4-30} we have
\begin{eqnarray}\label{5-07}
\alpha_{\epsilon}|u_{\epsilon}|^{\frac{n}{n-1}}&\leq& \lambda_{n}(1+\alpha||u_{\epsilon}||_{L^{n}(\Omega)}^{n})^{\frac{n}{n-1}}(\overline u_{\epsilon}+b_{\epsilon})^{\frac{n}{n-1}}\nonumber\\
&\leq&\lambda_{n}\overline u_{\epsilon}^{\frac{n}{n-1}}+\frac{\lambda_{n}\alpha}{n-1}||G_{\alpha}||_{L^{n}(\Omega)}^{n}+\frac{n}{n-1}\alpha_{\epsilon}b_{\epsilon}\overline u_{\epsilon}^{\frac{1}{n-1}}+o_{\epsilon}(1),
\end{eqnarray}
and
\begin{equation}\label{5-08}
b_{\epsilon}\overline u_{\epsilon}^{\frac{1}{n-1}}=-\frac{1}{(n\kappa_{n})^{\frac{1}{n-1}}}\log \delta+C_{G}+o_{\delta}(1)+o_{\epsilon}(1).
\end{equation}
Notice that
\begin{eqnarray}\label{5-09}
\lambda_{n}\overline u_{\epsilon}^{\frac{n}{n-1}}&\leq&\frac{ \lambda_{n}\overline u_{\epsilon}^{\frac{n}{n-1}}}{\tau_{\epsilon}^{\frac{1}{n-1}}}-\frac{\lambda_{n}}{n-1}(-\frac{1}{(n\kappa_{n})^{\frac{1}{n-1}}}\log \delta+\alpha||G_{\alpha}||_{L^{n}(\Omega)}^{n}+C_{G}+o_{\delta}(1)+o_{\epsilon}(1))\nonumber\\
&=&\frac{\lambda_{n}\overline u_{\epsilon}^{\frac{n}{n-1}}}{\tau_{\epsilon}^{\frac{1}{n-1}}}+\frac{n}{n-1}\log \delta-\frac{\alpha\lambda_{n}}{n-1}||G_{\alpha}||_{L^{n}(\Omega)}^{n}-\frac {\lambda_{n}}{n-1} C_{G}+o_{\delta}(1)+o_{\epsilon}(1).
\end{eqnarray}
Combining  (\ref{5-07})-(\ref{5-09}), we obtain in $\mathcal{W}_{Rr_{\epsilon}}(x_{\epsilon})$
\begin{equation*}
\alpha_{\epsilon}|u_{\epsilon}|^{\frac{n}{n-1}}\leq \frac{ \lambda_{n}\overline u_{\epsilon}^{\frac{n}{n-1}}}{\tau_{\epsilon}^{\frac{1}{n-1}}}-n\log \delta+\lambda_{n}C_{G}+o_{\delta}(1)+o_{\epsilon}(1).
\end{equation*}
Therefore, we have
\begin{eqnarray*}
& & \limsup_{\epsilon\rightarrow 0}\int_{\mathcal{W}_{Rr_{\epsilon}}(x_{\epsilon})}e^{\alpha_{\epsilon}|u_{\epsilon}|^{\frac{n}{n-1}}}dx\\
&\leq &  \delta^{-n} e^{\lambda_{n}C_{G}+o_{\delta}(1)}\limsup_{\epsilon\rightarrow 0}\int_{\mathcal{W}_{Rr_{\epsilon}}(x_{\epsilon})}(e^{\lambda_{n}|\overline u_{\epsilon}/\tau_{\epsilon}^{1/n}|^{\frac{n}{n-1}}}-1)dx\\
&\leq & \delta^{-n} e^{\lambda_{n}C_{G}+o_{\delta}(1)}\limsup_{\epsilon\rightarrow 0}\int_{\mathcal{W}_{\delta}(0)}(e^{\lambda_{n}|\overline u_{\epsilon}/\tau_{\epsilon}^{1/n}|^{\frac{n}{n-1}}}-1)dx\\
&\leq & \delta^{-n} e^{\lambda_{n}C_{G}+o_{\delta}(1)}\kappa_{n} \delta^{n}e^{1+\frac{1}{2}+\cdots+\frac{1}{n-1}}.
\end{eqnarray*}
Taking $\delta\rightarrow 0$, we have
\begin{equation*}
\limsup_{\epsilon\rightarrow 0}\int_{\mathcal{W}_{Rr_{\epsilon}}(x_{\epsilon})}e^{\alpha_{\epsilon}|u_{\epsilon}|^{\frac{n}{n-1}}}dx\leq \kappa_{n} e^{\lambda_{n}C_{G}+1+\frac{1}{2}+\cdots+\frac{1}{n-1}}.
\end{equation*}
Then by the Lemma \ref{4-21}, we obtain
\begin{equation*}
\limsup_{\epsilon\rightarrow 0}\int_{\Omega}e^{\alpha_{\epsilon}|u_{\epsilon}|^{\frac{n}{n-1}}}dx\leq |\Omega|+\kappa_{n} e^{\lambda_{n}C_{G}+1+\frac{1}{2}+\cdots+\frac{1}{n-1}}.
\end{equation*}
It follows Lemma \ref{3-10} to get
 $\sup_{u\in \mathcal{H}}J_{\lambda_{n}}^{\alpha}(u)\leq |\Omega|+\kappa_{n} e^{\lambda_{n}C_{G}+1+\frac{1}{2}+\cdots+\frac{1}{n-1}}.$

\
\

 \noindent\textbf{Case 2.} $x_{0}$ lies on the boundary of $\Omega$.

We proceed and use the same notions as in case 1.  By (\ref{4-34}), $M_{\epsilon}^{\frac{1}{n-1}}u_{\epsilon}\rightharpoonup 0$ weakly in $W_{0}^{1,q}(\Omega)$ for any $1<q<n$, and in $C^{1}(\overline{\Omega}\backslash\{ x_{0}\})$. Hence
\begin{equation}\label{5-010}
\int_{\mathcal{W}_{\delta}(x_{\epsilon})}F^{n}(\nabla \overline u_{\epsilon})dx\leq \tau_{\epsilon}=1-\frac{o_{\epsilon}(1)}{M_{\epsilon}^{n/(n-1)}},
\end{equation}
and we have in $\omega_{Rr_{\epsilon}}(x_{\epsilon})$,
\begin{equation}\label{5-011}
\alpha_{\epsilon}|u_{\epsilon}|^{\frac{n}{n-1}}\leq \lambda_{n}|\overline u_{\epsilon}/\tau_{\epsilon}^{1/n}|^{\frac{n}{n-1}}+o_{\epsilon}(1).
\end{equation}
Combining (\ref{5-010}), (\ref{5-011}) and Lemma \ref{4-21}, we have
\begin{equation}\label{5-012}
\limsup_{\epsilon\rightarrow 0}\int_{\Omega}e^{\alpha_{\epsilon}|u_{\epsilon}|^{\frac{n}{n-1}}}dx\leq |\Omega|+O(\delta^{n})e^{1+\frac{1}{2}+\cdots+\frac{1}{n-1}}.
\end{equation}
Letting $\delta\rightarrow 0$, (\ref{5-012}) together with Lemma \ref{3-10} gives $\sup_{u\in \mathcal{H}}J_{\lambda_{n}}^{\alpha}(u)\leq |\Omega|$, which is impossible. Therefore we conclude that $x_{0}$ cannot lie on $\partial\Omega$.

\end{proof}

\
\

In Lemma \ref{5-00} we have got the upper bound of $\sup_{u\in \mathcal{H}}J_{\lambda_{n}}^{\alpha}(u)$ if $u_\epsilon$ blows up.  Next we will construct an explicit test function to get the lower bound of $\sup_{u\in \mathcal{H}}J_{\lambda_{n}}^{\alpha}(u)$, which  will contradict the the upper bound of $\sup_{u\in \mathcal{H}}J_{\lambda_{n}}^{\alpha}(u)$. Thus we get a contradiction and consequently we complete the proof of Theorem. The similar arguments can be seen in \cite{YZ1,Z}.
\begin{lm}
There holds
\begin{equation}\label{5-013}
\sup_{u\in \mathcal{H}}J_{\lambda_{n}}^{\alpha}(u)> |\Omega|+\kappa_{n} e^{\lambda_{n}C_{G}+1+\frac{1}{2}+\cdots+\frac{1}{n-1}}.
\end{equation}
\end{lm}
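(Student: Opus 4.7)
The plan is to construct an explicit family $\phi_\epsilon\in\mathcal{H}$ with
$$J_{\lambda_n}^{\alpha}(\phi_\epsilon)>|\Omega|+\kappa_n\,e^{\lambda_n C_G+1+\tfrac12+\cdots+\tfrac{1}{n-1}}$$
for all sufficiently small $\epsilon>0$, which will directly contradict the upper bound of Lemma \ref{5-00}. The Ansatz is the standard Carleson--Chang--Yang test function adapted to the anisotropic setting: glue the explicit blow-up profile $w$ from \eqref{4-13} on a Wulff ball $\mathcal{W}_{R\epsilon}(x_0)$ centered at the (interior) blow-up point $x_0$ to the Green function $G_\alpha$ of \eqref{4-39} on the complement, via a thin transition ring.

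Concretely, with parameters $c=c(\epsilon)\to+\infty$, $B=B(\epsilon)$ and $R=R(\epsilon)\to+\infty$ with $R\epsilon\to 0$, and a cut-off $\eta_\epsilon$ supported in the transition ring, I would take
$$\phi_\epsilon(x)=\begin{cases} c+c^{-\frac{1}{n-1}}\Bigl[-\tfrac{n-1}{\lambda_n}\log\bigl(1+\kappa_n^{\frac{1}{n-1}}(F^o(x-x_0)/\epsilon)^{n/(n-1)}\bigr)+B\Bigr], & F^o(x-x_0)\le R\epsilon,\\ c^{-\frac{1}{n-1}}(G_\alpha-\eta_\epsilon\psi)(x), & R\epsilon\le F^o(x-x_0)\le 2R\epsilon,\\ c^{-\frac{1}{n-1}}G_\alpha(x), & F^o(x-x_0)\ge 2R\epsilon. \end{cases}$$
Two requirements pin down $c$ and $B$. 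First, continuity at $\{F^o(x-x_0)=R\epsilon\}$, using Lemma \ref{4-66}, yields
$$c^{\frac{n}{n-1}}=-\tfrac{1}{(n\kappa_n)^{1/(n-1)}}\log\epsilon+C_G-B+o(1).$$
Second, the normalization $\|F(\nabla\phi_\epsilon)\|_{L^n(\Omega)}=1$ splits into a bubble piece equal to $c^{-n/(n-1)}(1+O(R^{-n/(n-1)}))$ (by a radial computation using $\int_{\mathbb{R}^n}F^n(\nabla w)\,dx=1$), a negligible ring piece, and an outer piece $c^{-n/(n-1)}\int_{\Omega\setminus\mathcal{W}_{R\epsilon}(x_0)}F^n(\nabla G_\alpha)\,dx$ computed via \eqref{5-05}; setting their sum to $1$ fixes $B$ to leading order.

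With $\phi_\epsilon$ so determined I would evaluate $J_{\lambda_n}^{\alpha}(\phi_\epsilon)$ by splitting the domain. On $\Omega\setminus\mathcal{W}_{R\epsilon}(x_0)$ one has $\phi_\epsilon=O(c^{-1/(n-1)})\to 0$ uniformly, and Taylor expansion of the exponential gives $|\Omega|+o(1)$. On the bubble the key identity is
$$\lambda_n\bigl(1+\alpha\|\phi_\epsilon\|_{L^n(\Omega)}^n\bigr)^{\frac{1}{n-1}}|\phi_\epsilon|^{\frac{n}{n-1}}=\lambda_n c^{\frac{n}{n-1}}+\tfrac{n\lambda_n}{n-1}B+\tfrac{\lambda_n\alpha}{n-1}\|G_\alpha\|_{L^n(\Omega)}^n+\tfrac{n\lambda_n}{n-1}w\bigl(\tfrac{F^o(x-x_0)}{\epsilon}\bigr)+o(1);$$
after substituting the expressions for $c^{n/(n-1)}$ and $B$, the leading constants collapse to $n\log(1/\epsilon)+\lambda_n C_G+\frac{\lambda_n\alpha}{n-1}\|G_\alpha\|_{L^n(\Omega)}^n$, and the change of variables $y=(x-x_0)/\epsilon$ combined with the sharper Carleson--Chang integral yields the bubble contribution
$$\kappa_n\,e^{\lambda_n C_G+\frac{\lambda_n\alpha}{n-1}\|G_\alpha\|_{L^n(\Omega)}^n+1+\frac12+\cdots+\frac{1}{n-1}}\bigl(1+o(1)\bigr).$$

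The claimed strict inequality \eqref{5-013} then follows at once when $\alpha>0$, since $G_\alpha\not\equiv 0$ forces $e^{\frac{\lambda_n\alpha}{n-1}\|G_\alpha\|_{L^n(\Omega)}^n}>1$; in the case $\alpha=0$ one falls back to the classical Carleson--Chang/Yang trick, refining the choice of $B$ at order $R^{-n/(n-1)}$ to extract a strictly positive correction that survives after sending $\epsilon\to 0$ and then $R\to+\infty$. The main technical obstacle is the fine bookkeeping of the exponent on the bubble: because the integrand there is of order $\epsilon^{-n}$, every error inside the exponential must be genuinely $o(1)$ as $\epsilon\to 0$ (not merely $o_R(1)$), so the matching between the inner bubble and outer Green expansions---in particular the cancellation that absorbs the $B$-term against the Green-function energy computed in \eqref{5-05}---must be carried out at the sharp order $o(1)$.
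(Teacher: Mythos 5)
Your ansatz (gluing the explicit bubble profile on $\mathcal{W}_{R\epsilon}(x_0)$ to $G_\alpha$ outside, with matching constants $c,B$ fixed by continuity plus energy normalization) is the same strategy the paper uses, and the broad structure is right. But there is a genuine error in the bookkeeping of the exponent, and it kills your stated mechanism for the strict inequality when $\alpha>0$.

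Concretely: your energy identity $\|F(\nabla\phi_\epsilon)\|_{L^n}^n=1$, combined with \eqref{5-05}, forces a term $+\alpha\|G_\alpha\|_{L^n}^n$ into $c^{n/(n-1)}$, and hence the continuity constraint forces $B$ to contain a compensating $-\alpha\|G_\alpha\|_{L^n}^n$. When you then expand the bubble exponent,
$$\lambda_n\bigl(1+\alpha\|\phi_\epsilon\|_{L^n}^n\bigr)^{\frac{1}{n-1}}|\phi_\epsilon|^{\frac{n}{n-1}}=\lambda_n c^{\frac{n}{n-1}}+\tfrac{n\lambda_n}{n-1}\bigl(w_\epsilon+B\bigr)+\tfrac{\lambda_n\alpha}{n-1}\|G_\alpha\|_{L^n}^n+o(1),$$
the three occurrences of $\lambda_n\alpha\|G_\alpha\|_{L^n}^n$ add up to the coefficient $1-\tfrac{n}{n-1}+\tfrac{1}{n-1}=0$. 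So the $\alpha$-dependence cancels identically, and the bubble contribution is $\kappa_n e^{\lambda_n C_G+1+\frac12+\cdots+\frac{1}{n-1}}(1+o(1))$, not the larger constant you claim. (The paper avoids the bookkeeping trap by dividing the test function by $(1+\alpha C^{-n/(n-1)}\|G_\alpha\|_{L^n}^n)^{1/n}$, which removes $\alpha$ from $C^{n/(n-1)}$ and $b$ from the outset; the end result is the same.) Consequently the step "$e^{\frac{\lambda_n\alpha}{n-1}\|G_\alpha\|^n_{L^n}}>1$ forces the strict inequality" does not exist.

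The correct source of the strict inequality, and the one the paper uses, is the second-order positive correction from the region \emph{outside} the bubble: since $\phi_\epsilon=c^{-1/(n-1)}G_\alpha+O(\cdots)$ there, one has $e^{\lambda_n(\cdots)}\geq 1+\lambda_n c^{-n/(n-1)^2}|G_\alpha|^{n/(n-1)}$, giving an extra $+\lambda_n\|G_\alpha\|_{L^{n/(n-1)}}^{n/(n-1)}/c^{n/(n-1)^2}$ in the outer integral; this decays strictly more slowly than the error terms $O(c^{-n/(n-1)})+O(R^{-n/(n-1)})$ in the bubble estimate for the choice $R=-\log\epsilon$, so the sum exceeds $|\Omega|+\kappa_n e^{\lambda_n C_G+1+\cdots+\frac{1}{n-1}}$ for small $\epsilon$ regardless of whether $\alpha=0$ or $\alpha>0$. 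What you describe as a "fallback Carleson--Chang/Yang trick" for $\alpha=0$ is in fact the only mechanism in play. A smaller slip: the bubble energy piece is not $c^{-n/(n-1)}(1+O(R^{-n/(n-1)}))$ (note $\int_{\R^n}F^n(\nabla w)\,dx$ diverges logarithmically); it carries a $\log R$ term which must be seen to cancel against the $-\log(R\epsilon)$ term from \eqref{5-05}, exactly as in \eqref{5-016}--\eqref{5-017}.
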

\begin{proof}
Define a sequence of functions in $\Omega$ by
\begin{equation*}\label{5-014}
\phi_{\epsilon}=
\left\{
\begin{array}{ll}
\frac{C+C^{-\frac{1}{n-1}}(-\frac{n-1}{\lambda_{n}}\log(1+\kappa_{n}^{\frac{1}{n-1}}(\frac{F^{o}(x-x_{0})}
{\epsilon})^{\frac{n}{n-1}})+b)}{(1+\alpha C^{\frac{-n}{n-1}}||G_{\alpha}||_{L^{n}(\Omega)}^{n})^{\frac{1}{n}}},& x\in \overline {\mathcal{W}_{R\epsilon}(x_{0})},\\
\frac{C^{-\frac{1}{n-1}}(G-\eta \psi)}{(1+\alpha C^{\frac{-n}{n-1}}||G_{\alpha}||_{L^{n}(\Omega)}^{n})^{\frac{1}{n}}},& x\in \mathcal{W}_{2R\epsilon}(x_{0})\backslash\overline {\mathcal{W}_{R\epsilon}(x_{0})},\\
\frac{C^{-\frac{1}{n-1}}G}{(1+\alpha C^{\frac{-n}{n-1}}||G_{\alpha}||_{L^{n}(\Omega)}^{n})^{\frac{1}{n}}},& x\in \Omega\backslash \mathcal{W}_{2R\epsilon}(x_{0}),
\end{array}
\right.
\end{equation*}
where $G$ and $\psi$ are functions given in (\ref{4-39}), $R=-\log \epsilon,~~~\eta\in C_{0}^{1}(\mathcal{W}_{2R\epsilon}(x_{0}))$~~~satisfying that $\eta=1$ on $ \mathcal{W}_{R\epsilon}(x_{0})$  and  $|\nabla \eta|\leq \frac{2}{R\epsilon}$, $b$  and  $C$  are constants depending only on $\epsilon$  to be determined later. Clearly $\mathcal{W}_{2R\epsilon}(x_{0})\subset \Omega$ provided that $\epsilon$ is sufficiently small. In order to assure that $\phi_{\epsilon}\in W_{0}^{1,n}(\Omega)$, we set
$$C+C^{-\frac{1}{n-1}}(-\frac{n-1}{\lambda_{n}}\log(1+\kappa_{n}^{\frac{1}{n-1}}R^{\frac{n}{n-1}})+b)=C^{-\frac{1}{n-1}}(-\frac{1}{(n\kappa_{n})^{\frac{1}{n-1}}}\log (R\epsilon)+C_{G}),    $$
which gives
\begin{equation}\label{5-015}
C^{\frac{n}{n-1}}=-\frac{1}{(n\kappa_{n})^{\frac{1}{n-1}}}\log \epsilon+\frac{1}{\lambda_{n}}\log \kappa_{n}-b+C_{G}+O(R^{-\frac{n}{n-1}}).
\end{equation}
Next we make sure that $\int_{\Omega}F^{n}(\nabla \phi_{\epsilon})dx=1$.

By the coarea formula (\ref{2-03}), we have
\begin{eqnarray*}
\int_{\mathcal{W}_{R\epsilon}(x_{0})}\frac{(\frac{F^{o}(x-x_{0})}{\epsilon})^{\frac{n}{n-1}}\frac{1}{\epsilon^{n}}}{(1+\kappa_{n}^{\frac{1}{n-1}}(\frac{F^{o}(x-x_{0})}{\epsilon})^{\frac{n}{n-1}})^{n}}dx
&=&n\kappa_{n}\int_{0}^{R\epsilon}\frac{(\frac{s}{\epsilon})^{\frac{n}{n-1}}\frac{1}{\epsilon^{n}}}{(1+\kappa_{n}^{\frac{1}{n-1}}(\frac{s}{\epsilon})^{\frac{n}{n-1}})^{n}}s^{n-1}ds \\
&=&\frac{n-1}{\kappa_{n}^{\frac{1}{n-1}}}\int_{0}^{\kappa_{n}^{\frac{1}{n-1}}
R^{\frac{n}{n-1}}}\frac{t^{n-1}}{(1+t)^{n}}dt.
\end{eqnarray*}
Then it follows that
\begin{eqnarray}\label{5-016}
\int_{\mathcal{W}_{R\epsilon}(x_{0})}F^{n}(\nabla \phi_{\epsilon})dx&=&\frac{n-1}{\lambda_{n}(C^{\frac{n}{n-1}}+\alpha||G_{\alpha}||_{L^{n}(\Omega)}^{n})}\int_{0}^{\kappa_{n}^{\frac{1}{n-1}}R^{\frac{n}{n-1}}}
\frac{t^{n-1}}{(1+t)^{n}}dt\nonumber\\
&=&\frac{n-1}{\lambda_{n}(C^{\frac{n}{n-1}}+\alpha||G_{\alpha}||_{L^{n}(\Omega)}^{n})}\int_{0}^{\kappa_{n}^{\frac{1}{n-1}}R^{\frac{n}{n-1}}}
\frac{(t+1-1)^{n-1}}{(1+t)^{n}}dt\nonumber\\
&=&\frac{n-1}{\lambda_{n}(C^{\frac{n}{n-1}}+\alpha||G_{\alpha}||_{L^{n}(\Omega)}^{n})}(\sum_{k=0}^{n-2}\frac{C_{n-1}^{k}(-1)^{n-1-k}}{n-k-1}\nonumber\\
& &+\log(1+\kappa_{n}^{\frac{1}{n-1}}R^{\frac{n}{n-1}})+O(R^{-\frac{n}{n-1}}))\nonumber\\
&=&\frac{n-1}{\lambda_{n}(C^{\frac{n}{n-1}}+\alpha||G_{\alpha}||_{L^{n}(\Omega)}^{n})}(-(1+\frac{1}{2}+\cdots+\frac{1}{n-1})\nonumber\\
& &+\log(1+\kappa_{n}^{\frac{1}{n-1}}R^{\frac{n}{n-1}})+O(R^{-\frac{n}{n-1}})),
\end{eqnarray}
where we have used the fact that
\begin{equation*}
-\sum_{k=0}^{n-2}\frac{C_{n-1}^{k}(-1)^{n-1-k}}{n-k-1}=1+\frac{1}{2}+\cdots+\frac{1}{n-1}.
\end{equation*}
Noting that $\psi(x)$ satisfies that $|\nabla\psi(x)|=o(\frac{1}{F^0(x-x_{0})})$ as $x\rightarrow x_{0} $, and using Lemma \ref{2-01} and (\ref{4-39}), we have
$$~\int_{\mathcal{W}_{2R\epsilon}(x_{0})\backslash \mathcal{W}_{R\epsilon}(x_{0}) }F^{n}(\nabla G_{\alpha})-F^{n}(\nabla (G_{\alpha}-\eta\psi))dx=o_{R\epsilon}(1).$$
Then together with (\ref{5-05}), we have
\begin{eqnarray}\label{5-017}
 & & \int_{\Omega\backslash \mathcal{W}_{R\epsilon}(x_{0})}F^{n}(\nabla \phi_{\epsilon})dx\nonumber \\
&=& \frac{1}{C^{\frac{n}{n-1}}+\alpha||G_{\alpha}||_{L^{n}(\Omega)}^{n}}(\int_{\Omega\backslash \mathcal{W}_{R\epsilon}(x_{0})}F^{n}(\nabla G_{\alpha})dx\nonumber\\
& &-\int_{ \mathcal{W}_{2R\epsilon}(x_{0})\backslash \mathcal{W}_{R\epsilon}(x_{0})}F^{n}(\nabla G_{\alpha})
-F^{n}(\nabla (G_{\alpha}-\eta\psi))dx)\nonumber\\
 &=&\frac{1}{C^{\frac{n}{n-1}}+\alpha||G_{\alpha}||_{L^{n}(\Omega)}^{n}}(-\frac{1}{(n\kappa_{n})^{\frac{1}{n-1}}}\log (R\epsilon)\nonumber\\
 & &+\frac{\alpha\lambda_{n}}{n-1}||G_{\alpha}||_{L^{n}(\Omega)}^{n}+C_{G}+o_{R\epsilon}(1)).
\end{eqnarray}

Putting (\ref{5-016}), (\ref{5-017}) together, we have
\begin{eqnarray*}
\int_{\Omega}F^{n}(\nabla \phi_{\epsilon})dx&=&\frac{n-1}{\lambda_{n}(C^{\frac{n}{n-1}}+\alpha||G_{\alpha}||_{L^{n}(\Omega)}^{n})}(-\frac{n}{n-1}\log\epsilon+\frac{1}{n-1}\log\kappa_{n}+\frac{\lambda_{n}}{n-1}C_{G}\\
& &+\frac{\alpha\lambda_{n}}{n-1}||G_{\alpha}||_{L^{n}(\Omega)}^{n}-(1+\frac{1}{2}+\cdots+\frac{1}{n-1})+O(R^{-\frac{n}{n-1}})+o_{R\epsilon}(1)).
\end{eqnarray*}
Since $\int_{\Omega}F^{n}(\nabla \phi_{\epsilon})dx=1$, we have
\begin{eqnarray}\label{5-018}
C^{\frac{n}{n-1}}&=&\frac{n-1}{\lambda_{n}}(-\frac{n}{n-1}\log\epsilon+\frac{1}{n-1}\log\kappa_{n}+\frac{\lambda_{n}}{n-1}C_{G}\nonumber\\
& &-(1+\frac{1}{2}+\cdots+\frac{1}{n-1})+O(R^{-\frac{n}{n-1}})+o_{R\epsilon}(1).
\end{eqnarray}
Consequently from (\ref{5-015}), we have
\begin{equation}\label{5-019}
b=\frac{(n-1)}{\lambda_n}(1+\frac{1}{2}+\cdots+\frac{1}{n-1})+O(R^{-\frac{n}{n-1}})+o_{R\epsilon}(1).
\end{equation}
Since
\begin{equation*}
||\phi_{\epsilon}||_{L^{n}(\Omega)}^{n}=\frac{||G_{\alpha}||_{L^{n}(\Omega)}^{n}+O(C^{\frac{n^{2}}{n-1}}R^{n}\epsilon^{n})+O((R\varepsilon)^{n}(\log(R\epsilon)^{n}))}
{C^{\frac{n}{n-1}}+\alpha||G_{\alpha}||_{L^{n}(\Omega)}^{n}},
\end{equation*}
using the inequality
\begin{equation*}
(1+t)^{\frac{1}{n+1}}\geq 1-\frac{t}{n-1},~~~\text {for}~~~t~~\text{small }.
\end{equation*}

In view of (\ref{5-018}) and (\ref{5-019}), there holds in $\mathcal{W}_{R\epsilon}(x_{0})$,
\begin{eqnarray*}
& & \lambda_{n}|\phi_{\epsilon}(x)|^{\frac{n}{n-1}}(1+\alpha||\phi_{\epsilon}||_{L^{n}(\Omega)}^{n})^{\frac{1}{n-1}}\\
&\geq& \lambda_{n}C^{\frac{n}{n-1}}-n\log(1+\kappa_{n}^{\frac{1}{n-1}}(\frac{F^{o}(x-x_{0})}
{\epsilon})^{\frac{n}{n-1}})+\frac{n\lambda_{n}}{n-1}b-\frac{\alpha^{2}\lambda_{n}||G_{\alpha}||_{L^{n}(\Omega)}^{2n}}{(n-1)C^{\frac{n}{n-1}}}\\
& &+O(C^{-\frac{2n}{n-1}})+O(C^{\frac{n^{2}}{n-1}}(R\epsilon)^{n})+O((R\epsilon)^{n}(-\log(R\epsilon))^{n})\\
&\geq&-n\log\epsilon+\log\kappa_{n}+\lambda_{n}C_{G}+(1+\frac{1}{2}+\cdots+\frac{1}{n-1})\\
& &-n\log(1+\kappa_{n}^{\frac{1}{n-1}}(\frac{F^{o}(x-x_{0})}
{\epsilon})^{\frac{n}{n-1}})-\frac{\alpha^{2}\lambda_{n}||G_{\alpha}||_{L^{n}(\Omega)}^{2n}}{(n-1)C^{\frac{n}{n-1}}}\\
& &+O(C^{-\frac{2n}{n-1}})+O(R^{\frac{-n}{n-1}})+o_{R\epsilon}(1).
\end{eqnarray*}
where we have used the inequality $|1+t|^{\frac{n}{n-1}}\geq 1+\frac{n}{n-1}t+O(t^{3})$ for small $t$.
By using the fact
\begin{equation*}
\sum_{k=0}^{n-2}\frac{C_{n-2}^{k}(-1)^{n-k-2}}{n-k-1}=\frac{1}{n-1},
\end{equation*}
one can get
\begin{eqnarray*}
& &\int_{\mathcal{W}_{R\epsilon}(x_{0})}e^{-n\log\epsilon-n\log(1+\kappa_{n}^{\frac{1}{n-1}}(\frac{F^{o}(x-x_{0})}
{\epsilon})^{\frac{n}{n-1}})}dx\\
&=&\frac{1}{\epsilon^{n}}\int_{\mathcal{W}_{R\epsilon}(x_{0})}\frac{1}{(1+\kappa_{n}^{\frac{1}{n-1}}(\frac{F^{o}(x-x_{0})}
{\epsilon})^{\frac{n}{n-1}})^{n}}dx\\
&=&(n-1)\int_{0}^{\kappa_{n}^{\frac{1}{n-1}}R^{\frac{n}{n-1}}}\frac{t^{n-2}}{(1+t)^{n}}dt\\
&=&(n-1)\int_{0}^{\kappa_{n}^{\frac{1}{n-1}}R^{\frac{n}{n-1}}}\frac{(t+1-1)^{n-2}}{(1+t)^{n}}dt\\
&\geq&(n-1)(\frac{1}{n-1}+O(R^{-\frac{n}{n-1}}))=1+O(R^{-\frac{n}{n-1}})).
\end{eqnarray*}
Then we obtain
\begin{eqnarray*}
\int_{\mathcal{W}_{R\epsilon}(x_{0})}e^{\lambda_{n}|\phi_{\epsilon}(x)|^{\frac{n}{n-1}}
(1+\alpha||\phi_{\epsilon}(x)||_{L^{n}(\Omega)}^{n})^{\frac{1}{n-1}}}dx&\geq& \kappa_{n}e^{\lambda_{n}C_{G}+(1+\frac{1}{2}+\cdots+\frac{1}{n-1})}(1-
\frac{\alpha^{2}\lambda_{n}||G_{\alpha}||_{L^{n}(\Omega)}^{2n}}{(n-1)C^{\frac{n}{n-1}}})\\
& &+O(C^{-\frac{2n}{n-1}})+O(R^{-\frac{n}{n-1}})+o_{R\epsilon}(1).
\end{eqnarray*}
On the other hand, since
\begin{equation*}
\int_{\mathcal{W}_{2R\epsilon}(x_{0})}|G_{\alpha}|^{\frac{n}{n-1}}dx=O((R\epsilon)^{n}\log^{\frac{n}{n-1}}(R\epsilon))=o_{R\epsilon}(1),
\end{equation*}
we obtain
\begin{eqnarray*}
& &\int_{\Omega\backslash \mathcal{W}_{R\epsilon}(x_{0})}e^{\lambda_{n}|\phi_{\epsilon}(x)|^{\frac{n}{n-1}}(1+\alpha||\phi_{\epsilon}(x)||_{L^{n}(\Omega)}^{n})^{\frac{1}{n-1}}}dx\\
&\geq & \int_{\Omega\backslash \mathcal{W}_{2R\epsilon}(x_{0})}(1+\lambda_{n}|\phi_{\epsilon}(x)|^{\frac{n}{n-1}})dx\\
&\geq& |\Omega|+\frac{\lambda_{n}||G_{\alpha}||_{L^{\frac{n}{n-1}}}^{\frac{n}{n-1}}}{C^{\frac{n}{(n-1)^{2}}}}+O(C^{\frac{-2n}{(n-1)^{2}}})+o_{R\epsilon}(1)
\end{eqnarray*}
Together with the above integral estimates on $\mathcal{W}_{R\epsilon}$, and the fact that
\begin{equation*}
C^{\frac{-2n}{(n-1)^{2}}}\rightarrow 0~~~\text{and}~~~R^{\frac{-n}{(n-1)}}\rightarrow 0~~~\text{as}~~~\epsilon\rightarrow 0.
\end{equation*}
Then we have
\begin{equation*}
\int_{\Omega}e^{\lambda_{n}|\phi_{\epsilon}(x)|^{\frac{n}{n-1}}(1+\alpha||\phi_{\epsilon}(x)||_{L^{n}(\Omega)}^{n})^{\frac{1}{n-1}}}dx>|\Omega|+\kappa_{n}e^{\lambda_{n}C_{G}+(1+\frac{1}{2}+\cdots+\frac{1}{n-1})},
\end{equation*}
provided that $\epsilon>0$  is chosen sufficiently small. Thus we get the conclusion of Lemma.
\end{proof}

\section{Asymptotic representation of $G_{\alpha}$}
In this section we will give the asymptotic representation of Green function $G_{\alpha}$, similarly to \cite{KV,WX1,Y1}

{\bf{The proof of Lemma \ref{4-66}}:}
Since $c_{k}^{\frac{n}{n-1}}u_{k}\geq 0$ in $\Omega\backslash \{0\}$, we have $G_{\alpha}\geq 0$ in $\Omega\backslash\{0\}$. Theorem 1 in \cite{S3} gives
\begin{equation} \label{7-20}
\frac{1}{K}\leq \frac{G_{\alpha}}{-\log r}\leq K \quad\text{in}\quad\Omega\backslash \{0\}
\end{equation}
for some constant $k>0$.
Assume $\Gamma(r)=-c(n)\log r$, $c(n)=(n\kappa_{n})^{-\frac{1}{n-1}}$. Let $G_{k}=\frac{G_{\alpha}(r_{k}x)}{\Gamma(r_{k})}$, which is defined in $\{x\in \mathbb{R}^{n}\backslash \{0\},\quad r_{k}x\in \mathcal{W}_{\delta} \}$ for some small $\delta>0$.Here $r_{k}\rightarrow 0$ as $k\rightarrow +\infty$.
then $G_{k}$ satisfy the equation
$$-\sum_{i=1}^{n}\frac{\partial}{\partial x_{i}}(F^{n-1}(\nabla G_{k})F_{\xi}(\nabla G_{k}))=\alpha r_{k}^{n}G_{k}^{n-1}.    $$
By theorem 1 in \cite{T2}, when $r_{k}\rightarrow 0$, $G_{k}$ converges to $G^{\ast}$ in $C_{loc}^{1}(\mathbb{R}^{n}\backslash\{0\})$ and $G^{\ast}$ is bounded, where $G^{\ast}$ satisfying
$$-\sum_{i=1}^{n}\frac{\partial}{\partial x_{i}}(F^{n-1}(\nabla G^{\ast})F_{\xi}(\nabla  G^{\ast}))=0  . $$
From serrin's result (see \cite{S1})and (\ref{7-20}), $0$ is a removable singularity and $G^{\ast}$ can be extended to $\hat{G}\in C^{1}(\mathbb{R}^{n})$. Consequently, form Liouville type theorem (see\cite{HKM}), $\hat{G}$ must be a constant. Let $\gamma_{k}=\sup_{\mathcal{W}_{\delta}\backslash\mathcal{W}_{r_{k}}}\frac{G_{\alpha}(x)}{\Gamma(x)}$, and
$\gamma=\lim_{k\rightarrow +\infty}\gamma_{k}, (\gamma>0).$ This means the constant function $\hat{G}=\gamma$.

Set
\begin{eqnarray}
G_{\eta}^{+}(x)=(\gamma+\eta)(\Gamma(x)-\Gamma(\delta))-c(n)(\gamma+\eta)(F^{o}(x)-\delta)+\sup_{\partial \mathcal{W}_{\delta}}G_{\alpha},\\
G_{\eta}^{-}(x)=(\gamma-\eta)(\Gamma(x)-\Gamma(\delta))-c(n)(\gamma-\eta)(F^{o}(x)-\delta)+\inf_{\partial \mathcal{W}_{\delta}}G_{\alpha}.
\end{eqnarray}
A straightforward calculation shows
\begin{eqnarray}
-Q_{n}G_{\eta}^{+}(x)=c^{n-1}(n)(\gamma+\eta)^{n-1}\frac{n-1}{F^{o}(x)}(\frac{1}{F^{o}(x)}+1)^{n-2},\\
-Q_{n}G_{\eta}^{-}(x)=c^{n-1}(n)(\gamma-\eta)^{n-1}\frac{n-1}{F^{o}(x)}(\frac{1}{F^{o}(x)}-1)^{n-2}.
\end{eqnarray}
By, for any fixed $0<\eta<\gamma$, we have
\begin{eqnarray*}
-Q_{n}G_{\eta}^{+}(x)\geq-Q_{n}G\qquad in \quad \mathcal{W}_{\delta}\backslash \mathcal{W}_{r_{k}},\\
G_{\eta}^{+}|_{\partial \mathcal{W}_{\delta}}\geq G_{\alpha}|_{\partial \mathcal{W}_{\delta}},\qquad G_{\eta}^{+}|_{\partial \mathcal{W}_{r_{k}}}\geq G_{\alpha}|_{\partial \mathcal{W}_{r_{k}}},
\end{eqnarray*}
provided that $\delta$ are sufficiently small and $r_{k}<\delta$. By the comparison principle (see\cite{XG}), we have
\begin{equation}
G_{\alpha}\leq (\gamma+\eta)\Gamma(x)+C_{\delta}\qquad in\quad \mathcal{W}_{\delta}\backslash \mathcal{W}_{r_{k}}
\end{equation}
for some constant $C_{\delta}$. Letting $\eta\rightarrow 0$ first, then $k\rightarrow\infty$, one has
\begin{equation*}
G_{\alpha}\leq \gamma\Gamma(x)+C_{\delta}\qquad in\quad \mathcal{W}_{\delta}\backslash \{0\}.
\end{equation*}
A similar argument gives $G_{\alpha}\geq \gamma \Gamma(x)+C_{\delta}^{'}$ in $\mathcal{W}_{\delta}\backslash \{0\}$ for some constant $C_{\delta}^{'}$. Hence $G_{\alpha}-\gamma\Gamma(x)$ is bounded in $L^{\infty}(\mathcal{W}_{\delta})$.

Next we prove the continuity of $G_{\alpha}-\gamma\Gamma(x)$ at $0$ .We look at the points where the bounded function $G_{\alpha}-\gamma\Gamma(x)$ achieves its supremum in
$\overline{\mathcal{W}_{\delta}}$.Set $\lambda=\sup_{\overline{\mathcal{W}_{\delta}}}(G_{\alpha}-\gamma\Gamma(x))$.

$\lambda$ achieves at some point in $\mathcal{W}_{\delta}\backslash \{0\}$, then $G_{\alpha}-\gamma\Gamma(x)-\gamma c(n)F^{o}(x)$ also achieves at some point in $\mathcal{W}_{\delta}\backslash \{0\}$. It follows from comparison principle (see\cite{D1}) that $G_{\alpha}-\gamma\Gamma(x)-\gamma c(n)F^{o}(x)$ is a constant, hence we have done.

$\lambda$ achieves at $0$, Set
\begin{equation*}
w_{r}(x)=G_{\alpha}(rx)-\gamma\Gamma (r)\qquad in\quad \mathcal{W}_{\frac{\delta}{r}} \backslash \{0\}.
\end{equation*}
The function $w_{r}$ satisfies $-Q_{n}(w_{r}(x))-\alpha r^{n}G_{\alpha}^{n-1}(rx)=0$. We also have $r^{n}G_{\alpha}^{n-1}(rx)\in L^{\infty}(\mathcal{W}_{\delta})$ and
$|w_{r}-\gamma\Gamma(x)|\leq C_{0}$ for $C_{0}=\sup_{\mathcal{W_{\delta}}\backslash\{0\}}|G_{\alpha}-\gamma\Gamma(x)|$. By Theorem 1 in \cite{T2}, when $r\rightarrow 0$,
$w_{r}\rightarrow w$ in $C_{loc}^{1}(\mathbb{R}^{n}\backslash\{0\})$, where $w\in C^{1}(\mathbb{R}^{n}\backslash\{0\})$ satisfies $-Q_{n}(w)=0$. For the sequence $\xi_{j}=\frac{x_{r_{j}}}{r_{j}},F^{o}(\xi_{j})=1$, which maybe assumed to converge to $\xi^{0}\in \partial \mathcal{W}_{1}$, we have
\begin{equation*}
w_{r_{j}}(\xi_{j})-\gamma\Gamma(\xi_{j})=G_{\alpha}(x_{r_{j}})-\gamma\Gamma(x_{r_{j}})\rightarrow \lambda.
\end{equation*}
Hence
\begin{equation*}
w(x)\leq \gamma\Gamma(x)+\lambda \qquad and \qquad w(\xi^{0})=\gamma\Gamma(\xi^{0})+\lambda.
\end{equation*}
By comparison principle (see \cite{XG}),  $w(x)=\gamma\Gamma(x)+\lambda$ and hence $w_{r}\rightarrow \gamma\Gamma(x)+\lambda$ in $C_{loc}^{1}(\mathbb{R}^{n}\backslash\{0\})$.
This implies
\begin{equation}\label{7-04}
\lim_{r\rightarrow 0}(G_{\alpha}(rx)-\gamma\Gamma(rx))=\lambda,\qquad\qquad \lim_{r\rightarrow 0}\nabla_{x} (G_{\alpha}(rx)-\Gamma(rx))=0.
\end{equation}
The above equalities lead to the continuity of $G_{\alpha}-\gamma\Gamma$ and $\lim_{x\rightarrow 0}F^{0}(x)\nabla(G_{\alpha}-\gamma\Gamma)=0$.

We assume $\sup_{x\in \mathcal{W}_{\delta}}(G_{\alpha}-\gamma\Gamma)=\sup_{F^{0}(x)=\delta}(G_{\alpha}-\gamma\Gamma)$, we define $w_{r}$ as the above, then $w_{r}\rightarrow w$ in $C_{loc}^{1}(\mathbb{R}^{n}\backslash \{0\})$ and $|w-\gamma\Gamma|\leq C_{0}.$ We now look at the points where $w-\gamma\Gamma$ achieves its supremum in $\mathbb{R}^{n}$.
Set $\tilde{\lambda}=\sup_{\mathbb{R}^{n}}(w-\gamma\Gamma).$

If $\tilde{\lambda}$ is achieved at some point in $\mathbb{R}^{n}\backslash \{0\}$, then $w-\gamma\Gamma$ equals to some constant by strong maximum principle\cite{G}  , which implies $G_{\alpha}(rx)-\gamma\Gamma(rx)\rightarrow \tilde{\lambda}$ in $C_{loc}^{1}(\mathbb{R}^{n}\backslash \{0\})$ as $r\rightarrow 0$. For any fixed $\epsilon>0$, there exists $n_{0}$ such that $n\geq n_{0}$ and $x\in \partial\mathcal{W}_{1}$, we have
\begin{equation*}
\gamma\Gamma(r_{n}x)+\tilde{\lambda}-\epsilon\leq G_{\alpha}(r_{n}x)\leq \gamma\Gamma(r_{n}x)+\tilde{\lambda}+\epsilon.
\end{equation*}
Applying maximum principle\cite{G} in $\mathcal{W}_{r_{n_{0}}}\backslash\mathcal{W}_{r_{n}}$ we obtain
\begin{equation*}
\gamma\Gamma(x)+\tilde{\lambda}-\epsilon\leq G_{\alpha}(x)\leq \gamma\Gamma(x)+\tilde{\lambda}+\epsilon,
\end{equation*}
which leads to    with $\lambda$ replaced by $\tilde{\lambda}$.

If $\tilde{\lambda}$ is achieved at $0$, we simply argue as in the above to deduce
\begin{equation}\label{7-05}
\lim_{x\rightarrow 0}(w-\gamma\Gamma)=\tilde{\lambda}\qquad and \quad hence \qquad \lim_{x\rightarrow 0}\lim_{r_{n}\rightarrow 0}(G_{\alpha}(r_{n}x)-\gamma\Gamma(r_{n}x))=\tilde{\lambda}.
\end{equation}

If $\tilde{\lambda}$ is achieved at $\infty$, the same idea as in case can be applied when we defined $\lambda(R)=\max_{\delta\leq F^{o}(x)\leq R}(w-\gamma\Gamma)=\max_{\partial \mathcal{W}_{R}}(w-\gamma\Gamma)$ and let $R$ tend to $\infty$. We obtain
\begin{equation}\label{7-06}
\lim_{x\rightarrow \infty}(w-\gamma\Gamma)=\tilde{\lambda},\qquad \lim_{x\rightarrow \infty}\lim_{r_{n}\rightarrow 0}(G_{\alpha}(r_{n}x)-\gamma\Gamma(r_{n}x))=\tilde{\lambda}.
\end{equation}
As long as we have (\ref{7-05})and (\ref{7-06}) , we can have use maximum principle\cite{G} again to conclude (\ref{7-04})  as before.

Integrating by parts on both sides of Eq.(\ref{4-31}) over $\mathcal{W}_{\delta}$, we have
\begin{equation}\label{7-16}
-\int_{\mathcal{W}_{\delta}}div(F^{n-1}(\nabla G_{\alpha})F_{\xi}(\nabla G_{\alpha}))dx+=1+\alpha\int_{\mathcal{W}_{\delta}}G_{\alpha}^{n-1}dx.
\end{equation}
Because $G_{\alpha}(x)=\gamma\Gamma(x)+o(1)$ and $\nabla G_{\alpha}(x)=\gamma\nabla\Gamma(x)+o(\frac{1}{F^{o}(x)}) $ as $x\rightarrow 0.$ Inserting the above two equalities into (\ref{7-16}) , then letting $\delta\rightarrow 0$, we obtain $\gamma=1$.

\end{document}